\definecolor{gold}{rgb}{1,.70,.0}   
\theoremstyle{plain}
\newtheorem{theorem}{Theorem}[section]            
\newtheorem{proposition}[theorem]{Proposition}  
\newtheorem{corollary}[theorem]{Corollary}	      %
\theoremstyle{definition}
\newtheorem{definition}[theorem]{Definition}    
\newtheorem{remark}[theorem]{Remark}
\numberwithin{theorem}{section}
\numberwithin{equation}{section}
\newcommand{\gaction}[2]{\genfrac{}{}{0.5pt}{}{#1}{#2}%
                        \!\lower2pt\hbox{\rotatebox[origin=c]{-90}{{$\looparrowright$}}}}
\newcommand{\dotfraction}[2]{\genfrac{}{}{0.5pt}{}{#1}{#2}%
                        \!\lower.5pt\hbox{{$\circ$}}}
\titleformat*{\section}{\fontsize{14pt}{14pt} \bf}        
\def\z{\hbox{\raisebox{-.1pt}{-}}}
\def\id{\hbox{\rm id}\,}
\def\gen{\hbox{\rm gen}\,}
\def\tspin{\hbox{\rm spin}}
\def\spin{\hbox{\rm spin\,}}
\def\del{\partial}
\def\Inv{\hbox{\rm Inv}}
\def\Liff{\Leftrightarrow}
\def\Lthenif{\Leftarrow}
\def\spin{\hbox{spin\,}}
\def\span{{\rm span}}
\def\APq{\hbox{\sc Apo}\mkern1mu( 4)}
\def\SIq{\hbox{\sc Kal}\mkern1mu(4)}
\def\APt{\hbox{\sc Apo}^*\!(3)}
\def\SIt{\hbox{\sc Kal}\mkern1mu(3)}
\def\APqq{\hbox{\sc Apo}\mkern1mu (5)}
\def\SIqq{\hbox{\sc Kal}\mkern1mu(5)}
\def\SI{\hbox{\sc Kal}}
\def\AP{\hbox{\sc Apo}}
\def\Des{\hbox{\sc Des}}
\def\Dest{\hbox{\sc Des}^*\!(3)}
\def\Desq{\hbox{\sc Des}\mkern1mu(4)}
\begin{document}

\title{Integral spinors, Apollonian disk packings, \\  and Descartes groups}






\author{Jerzy Kocik 
    \\ \small Department of Mathematics, Southern Illinois University, Carbondale, IL62901
   \\ \small jkocik{@}siu.edu
}


\date{}

\maketitle

\vspace{-.37in}

\begin{abstract}
\noindent
We show that every irreducible integral Apollonian packing can be set in the Euclidean space 
so that all of its tangency spinors and all reduced coordinates and co-curvatures are integral.
As a byproduct, we prove that in any integral Descartes configuration, the sum of the curvatures of 
two adjacent disks can be written as a sum of two squares. 
Descartes groups are defined, and an interesting occurrence of the Fibonacci sequence is found.
\\[5pt]
{\bf Keywords:} 
Descartes theorem, Apollonian disk packing, tangency spinors, 
Descartes group, sum of squares theorem, Fibonacci sequence.
\\[5pt]
\scriptsize {\bf MSC:} 52C26,  
                               28A80,  
                               51M15, 
                               11A99.  
\end{abstract}

\section{Introduction}

Apollonian disk packings, the examples of which are presented 
in Figure \ref{fig:A1} and \ref{fig:A2}, convey a rich number-theoretic content.
To start with, there exist ``integral packings'' where the curvatures of the disks 
(the numbers in the figures) are all integers.
There are infinitely many such packings.
Curvatures are not the only magnitudes associated with the disks.
There are also disk symbols -- related to the disks positions-- and the tangency spinors.
The main question is if the integrality of these quantities (especially the spinors)  is possible.

Section 3 clarifies these terms and introduces a few new convenient concepts.

Section 4 presents the main result: the proof that indeed every Apollonian
\mbox{packing} 
may be located in the plane so that such a ``hyper-integrality'' is attained.

Section 5 proves some properties of the concepts used in the proof.

Section 6  discusses an interesting order of the Apollonian disk packings
that emerged in due process, including an intriguing exceptional case of a ``Fibonacci'' family.

Finally, Section 7 discusses the group-theoretic aspect of these concepts.
In particular, we define the Descartes group and kaleidoscope group, 
and their action on triples and quadruples of tangent disks.

We start with an associated minor, yet intriguing, observation that seem to have been missed, 
namely, that the sum of the curvatures of two adjacent disks in such packings 
can always be presented as a sum of two squares,
as explained in Section~2.
Proving this fact directly may be a rather challenging task,
while we will procure it as a corollary to the main theorem.
 
This paper is companion to \cite{jk-lattices}, where the integral Apollonian packings are 
derived from spinors only.

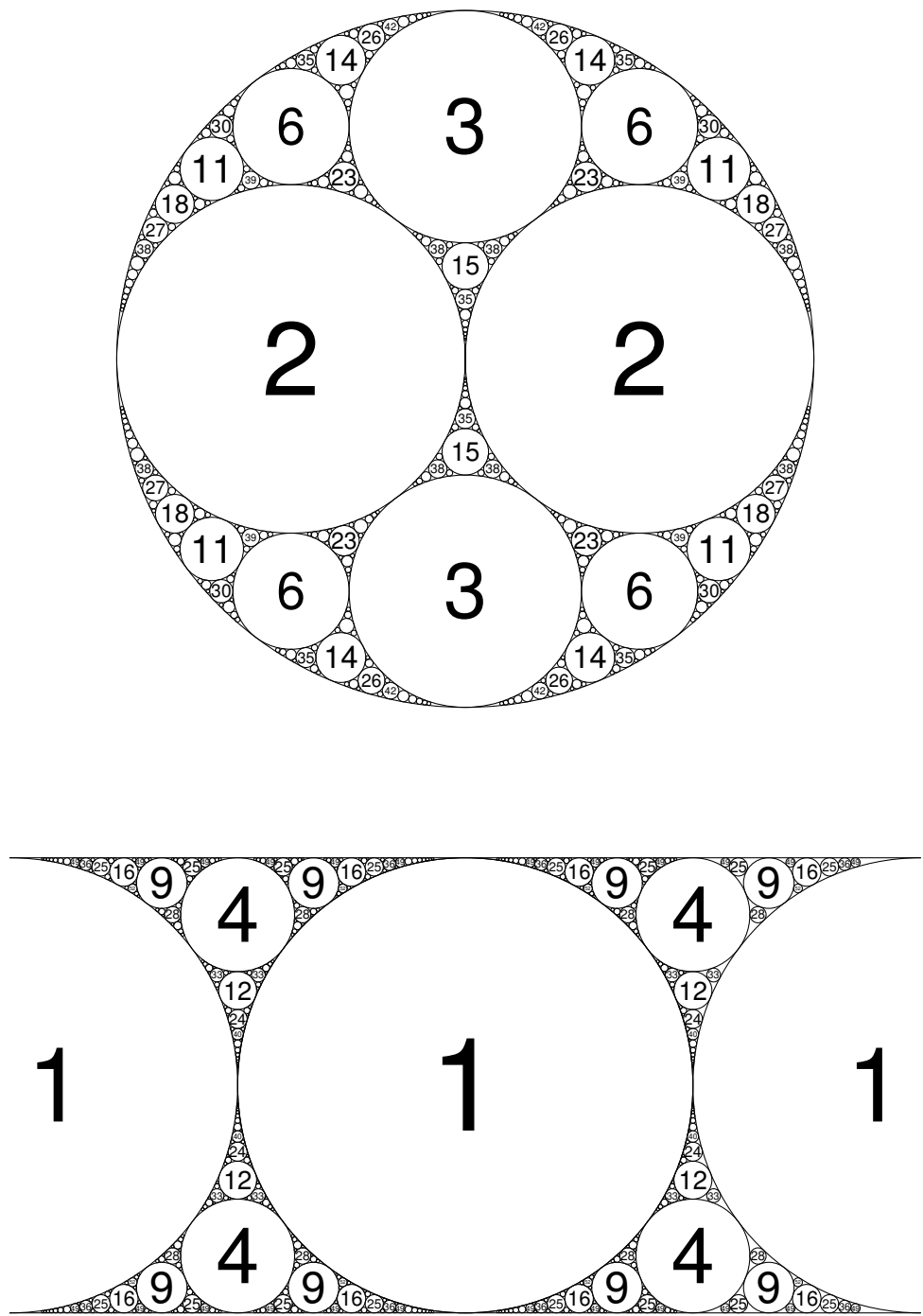
\begin{figure}
\centering

\begin{tikzpicture}[scale=4.9]
\draw [thin] (0,0) circle (1);

\foreach \a/\b/\c/\s   in {
1 / 0 / 2 / 4
}
\draw [thin] (\a/\c,\b/\c) circle (1/\c)
          (-\a/\c,\b/\c) circle (1/\c)
node  [scale=\s]  at (\a/\c,\b/\c)   {$\sf \c$}
node  [scale=\s]  at (-\a/\c,\b/\c) {$\sf \c$};

\foreach \a/\b/\c/\s in {
0 / 2 / 3   /3,
0 /4 /15   /1,
0 / 6 / 35 / 0.5 
}
\draw [thin] (\a/\c,\b/\c) circle (1/\c)
          (\a/\c,-\b/\c) circle (1/\c)
        node [scale= \s]  at (\a/\c,\b/\c)  {$\sf \c$}
        node [scale= \s]  at (\a/\c,-\b/\c) {$\sf \c$};

\foreach \a/\b/\c in {
0 / 8/ 63,
0 /10 / 99,
0 / 12 / 143,
0 / 14 / 195,
0 / 16 / 255,
0 / 18 / 323
}
\draw [thin] (\a/\c,\b/\c) circle (1/\c)  
           (\a/\c,-\b/\c) circle (1/\c);
\foreach \a/\b/\c/\s in {
3 / 4 /6  /2, 
8 / 6 / 11  /1.5,
5 / 12/ 14  /1.2 , 
15/ 8 / 18  /1 ,
8 / 12 / 23 /.9 ,
7 / 24 / 26 /.7,
24/	10/	27 /.7,
21/	20/	30 /.7 ,
16/	30/	35 / .6,
3/	12/	38 /.5,
35/	12/	38 /.5,
24/	20/	39 /.4,
9/	40/	42 /.4
}
\draw [thin] (\a/\c,\b/\c) circle (1/\c)       (-\a/\c,\b/\c) circle (1/\c)
          (\a/\c,-\b/\c) circle (1/\c)       (-\a/\c,-\b/\c) circle (1/\c)
node [scale=\s]  at (\a/\c,\b/\c)    {$\sf \c$}
node [scale=\s]  at (-\a/\c,\b/\c)   {$\sf \c$}
node [scale=\s]  at (\a/\c,-\b/\c)   {$\sf \c$}
node [scale=\s]  at (-\a/\c,-\b/\c)  {$\sf \c$};


\foreach \a /  \b / \c   in {
16/	36/	47, 15/	24/	50,
48/	14/	51, 45/	28/	54,
24/	30/	59, 40/	42/	59,
11/	60/	62, 21/	36/	62,
48/	28/	63, 33/	56/	66,
63/	16/	66, 8/	24/	71,
55/	48/	74, 24/	70/	75,
48/	42/	83, 80/	18/	83,
13/	84/	86, 77/	36/	86,
24/	76/	87, 24/	40/	87,
39/	80/	90, 64/	60/	95,
80/	36/	95, 33/	72/	98,
63/	48/	98, 65/	72/	98,
3/	20/	102, 99/	20/	102,
8/	30/	107, 56/	90/	107,
5/	36/	110, 69/	60/	110,
91/	60/	110, 48/	92/	111,
48/	56/	111, 15/	112/114,
15/	40/	114, 40/	72/	119,
39/	96/	122, 120/22/	123,
117/44/	126, 32/126/	131,
48/102/	131, 112/66/	131,
35/60/	134, 99/60/	134,
120/44/	135, 105/88/	138,
32/132/	143, 80/72/	143,
17/144/	146, 143/24/	146,
48/70/	147, 96/110/	147,
45/76/	150, 51/140/	150,
120/66/	155, 85/132/	158,
63/80/	162, 24/60/	167,
56/120/	167, 136/84/	167,
55/96/	170, 119/120/170,
72/154/171, 168/26/171,
69/92/174, 165/52/174,
64/102/179, 80/90/179,
160/78/179, 19/180/182,
51/156/182, 141/84/182,
120/88/183, 168/52/183,
57/176/186, 153/104/186,
48/84/191, 128/132/191,
65/120/194, 95/168/194,
129/120/194, 144/130/195,
3/28/198, 195/28/	198,
8/42/203
}
\draw [thin] (\a/\c,\b/\c) circle (1/\c)          (-\a/\c,\b/\c) circle (1/\c)
          (\a/\c,-\b/\c) circle (1/\c)         (-\a/\c,-\b/\c) circle (1/\c);
\end{tikzpicture}

~~

~~

~~

\begin{tikzpicture}[scale=3.2, rotate=90]  
\clip (-1.2,-2) rectangle (1.2,2);

\draw (1,-2) -- (1,3);
\draw (-1,-2) -- (-1,3);
\draw (0,0) circle (1);
\draw [thin] (0,0) circle (1)
          node  [scale=5]  at (0,0) {$\sf 1$};
\draw [thin] (0,2) circle (1)
          node  [scale=4]  at (0,1.8) {$\sf 1$};
\draw [thin] (0,-2) circle (1) 
          node  [scale=4]  at (0,-1.8) {$\sf 1$};         
\foreach \a/\b/\c/\s in {
3/  4/ 4/    3,
5/ 12/	12 /  1, 
7/ 24/	24/ .6,
9/ 40/	40/ .3
}
\draw [thin] (\a/\c,\b/\c) circle (1/\c)    (\a/\c,-\b/\c) circle (1/\c)
          (-\a/\c,\b/\c) circle (1/\c)    (-\a/\c,-\b/\c) circle (1/\c)
        node [scale= \s]  at (\a/\c,\b/\c)  {$\sf \c$}
        node [scale= \s]  at (\a/\c,-\b/\c) {$\sf \c$}
        node [scale= \s]  at (-\a/\c,\b/\c) {$\sf \c$}
        node [scale= \s]  at (-\a/\c,-\b/\c) {$\sf \c$};
\foreach \a/\b/\c in {
11/	60/	60,
13/	84/	84,
15/	112/112,
17/	144/144,
19/	180/180,
21/	220/220,
23/	264/264,
25/	312/312
}
\draw [thin] (\a/\c,\b/\c) circle (1/\c)   (\a/\c, -\b/\c) circle (1/\c)
          (-\a/\c, \b/\c) circle (1/\c)   (-\a/\c, -\b/\c) circle (1/\c);

\foreach \a/\b/\c/\d in {
8/    6/   9     / 1.7,
15/   8/   16  / .8,
24/  20/  25 /.6,
24/  10/  25/.5,
21/  20/  28 / .5,
16/	30/	33/ .4,
35/	12/	36/.4,
48/	42/	49/.3,
48/	28/	49/.3,
48/	14/	49/.3,
45/	28/	52/.2
}
\draw [thin] (\a/\c,\b/\c) circle (1/\c)       (-\a/\c,\b/\c) circle (1/\c)
          (\a/\c,-\b/\c) circle (1/\c)       (-\a/\c,-\b/\c) circle (1/\c)
          (\a/\c,2-\b/\c) circle (1/\c)       (-\a/\c,2-\b/\c) circle (1/\c)
          (\a/\c,-2+\b/\c) circle (1/\c)       (-\a/\c,-2+\b/\c) circle (1/\c)
node [scale=\d]  at (\a/\c,\b/\c)    {$\sf \c$}
node [scale=\d]  at (-\a/\c,\b/\c)   {$\sf\c$}
node [scale=\d]  at (\a/\c,-\b/\c)   {$\sf\c$}
node [scale=\d]  at (-\a/\c,-\b/\c)  {$\sf\c$}
node [scale=\d]  at (\a/\c,2-\b/\c)    {$\sf\c$}
node [scale=\d]  at (-\a/\c,2-\b/\c)   {$\sf\c$}
node [scale=\d]  at (\a/\c,-2+\b/\c)    {$\sf\c$}
node [scale=\d]  at (-\a/\c,-2+\b/\c)   {$\sf\c$};

\foreach \a /  \b / \c   in {
40/	42/	57,	33/	56/	64,	63/	48/	64,	63/	16/	64,	55/	48/	72,	24/	70/	73,
69/	60/	76,	80/	72/	81,	64/	60/	81,	80/	36/	81,	80/	18/	81,	77/	36/	84,
39/	80/	88,	65/	72/	96,	48/	92/	97,	99/	60/	100,	99/	20/	100,	56/	90/	105,
91/	60/	108,	120/	110/	121,	120/	88/	121,	120/	66/	121,	120/	44/	121,	120/	22/	121,
117/	44/	124,	32/	126/	129,	112/	66/	129,	105/	88/	136,	143/	120/	144,	143/	24/	144,
96/	110/	145,	51/	140/	148,	141/	84/	148,	136/	120/	153,	136/	84/	153,	149/	132/	156,
85/	132/	156,	129/	120/	160,	119/	120/	168,	168/	156/	169,	72/	154/	169,	168/	130/	169,
168/	104/	169,	168/	78/	169,	168/	52/	169,	168/	26/	169,	165/	52/	172,	128/	132/	177,
160/	78/	177,	57/	176/	184,	153/	104/	184,	95/	168/	192,	96/	186/	193,	144/	130/	193,
189/	148/	196,	195/	140/	196,	195/	84/	196,	195/	28/	196,	40/	198/	201,	104/	180/	201,
184/	162/	201
}
\draw [thin] (\a/\c,\b/\c) circle (1/\c)          (-\a/\c,\b/\c) circle (1/\c)
          (\a/\c,-\b/\c) circle (1/\c)         (-\a/\c,-\b/\c) circle (1/\c)
          (\a/\c,2-\b/\c) circle (1/\c)          (-\a/\c,2-\b/\c) circle (1/\c);

\end{tikzpicture}

\caption{At the top:  Apollonian Window. At the bottom: Apollonian Belt}
\label{fig:A1}
\end{figure}

\begin{figure}
\centering
\begin{tikzpicture}[scale=31]
\draw [thin] (3/6,4/6) circle (1/6);
\draw [thin] (6/11,8/11) circle (1/11);
\draw [thin] (7/14,8/14) circle (1/14);
\draw [thin] (6/15,10/15) circle (1/15);
\foreach \a/\b/\c/\d/\s in {
 6/ 8/ 11/ 94/7,
 7/ 8/ 14/ 8/6,
 6/ 10/ 15/ 9/6,
14/ 14/ 23/ 17/4,
11/ 20/ 26/ 20/4,
14/ 20/ 35/ 17/3,
27/ 28/ 42/ 36/3,
22/ 38/ 47/ 41/2,
30/ 28/ 51/ 33/2,
22/ 44/ 59/ 41/2
}
\draw [thin] (\a/\c,\b/\c) circle (1/\c)
        node [scale= \s]  at (\a/\c,\b/\c)  {$\hbox{\sf \c}\ $}
;

\foreach \a/\b/\c/\d in {
30/ 38/ 71/ 33,
46/ 50/ 71/ 65,
27/ 44/ 74/ 36,
39/ 64/ 78/ 72,
41/ 56/ 86/ 56,
54/ 50/ 95/ 57,
57/ 64/ 102/ 72,
70/ 68/ 107/ 89,
49/ 80/ 110/ 80,
39/ 80/ 110/ 72,
71/ 80/ 110/ 104,
62/ 98/ 119/ 113,
75/ 68/ 122/ 84,
54/ 64/ 123/ 57,
54/ 100/ 123/ 105,
46/ 80/ 131/ 65,
57/ 80/ 134/ 72,
54/ 110/ 143/ 105,
86/ 80/ 155/ 89,
97/ 104/ 158/ 128,
102/ 118/ 159/ 153,
91/ 140/ 170/ 164,
81/ 136/ 174/ 144,
62/ 128/ 179/ 113,
105/ 104/ 182/ 120,
75/ 100/ 186/ 84,
86/ 98/ 191/ 89,
70/ 110/ 191/ 89
}
\draw [thin] (\a/\c,\b/\c) circle (1/\c)  
node [scale= 90/\c]  at (\a/\c,\b/\c)  {$\c$}
;

\foreach \a/\b/\c/\d  in {
134/ 140/ 203/ 185,
71/ 128/ 206/ 104,
81/ 152/ 206/ 144,
135/ 128/ 206/ 168,
102/ 134/ 207/ 137,
99/ 140/ 210/ 140,
139/ 164/ 218/ 212,
102/ 140/ 219/ 137,
110/ 188/ 227/ 209,
142/ 128/ 227/ 161,
126/ 190/ 231/ 225,
126/ 146/ 231/ 161,
126/ 118/ 231/ 129,
102/ 194/ 239/ 201,
105/ 136/ 246/ 120,
111/ 176/ 246/ 176,
97/ 152/ 254/ 128,
161/ 176/ 254/ 224,
147/ 164/ 258/ 188,
174/ 170/ 263/ 225,
91/ 188/ 266/ 164,
155/ 140/ 266/ 164,
102/ 208/ 267/ 201,
150/ 164/ 267/ 185,
126/ 140/ 275/ 129,
137/ 224/ 278/ 248,
126/ 206/ 279/ 209,
182/ 218/ 287/ 281,
126/ 176/ 291/ 161,
126/ 212/ 291/ 209,
102/ 188/ 299/ 153
}
\draw (\a/\c,\b/\c) circle (1/\c);  
\node [scale= 3]  at (.34, .8)  {\sf -6};

\end{tikzpicture}

\begin{tikzpicture}[scale=10,rotate=30]
\draw [thin] (1/2,0) circle (1/2);
\foreach \a/\b/\c/\d/\s in {
2/ 0/ 3/ 1/8,
1/ 0/ 6/ 0/5,
2/ 2/ 7/ 1/5,
2/ -2/ 7/ 1/5,
5/ 4/ 10/ 4/3,
5/ -4/ 10/ 4/3,
10/ 6/ 15/ 2,
10/ -6/ 15/2,
2/ 4/ 19/ 2,
2/ -4/ 19/ 2,
17/ 8/ 22/ 1,
17/ -8/ 22/ 1,
10/ 12/ 27/ 1,
10/ -12/ 27/ 1,
26/ 10/ 31/ 1,
26/ -10/ 31/ 1,
 11/ 4/ 34/ 1,
11/ -4/ 34/ 1,
 2/ 6/ 39/ 1,
 2/ -6/ 39/ 1,
 5/ 12/ 42/ 1,
19/ 12/ 42/ 1,
37/ 12/ 42/ 1,
5/ -12/ 42/ 1,
19/ -12/ 42/ 1,
37/ -12/ 42/ 1
}
\draw [thin] (\a/\c,\b/\c) circle (1/\c)
        node [scale= \s]  at (\a/\c,\b/\c)  {$\hbox{\sf \c} \ $}
;
\foreach \a/\b/\c/\d in {
26/ 20/ 43/ 25,
26/ -20/ 43/ 25,
17/ 24/ 54/ 16,
17/ -24/ 54/ 16,
50/ 14/ 55/ 49,
50/ -14/ 55/ 49,
35/ 20/ 58/ 28,
35/ -20/ 58/ 28,
26/ 30/ 63/ 25,
26/ -30/ 63/ 25,
11/ 12/ 66/ 4,
11/ -12/ 66/ 4,
 2/ 8/ 67/ 1,
50/ 28/ 67/ 49,
 2/ -8/ 67/ 1,
50/ -28/ 67/ 49
}
\draw [thin] (\a/\c,\b/\c) circle (1/\c)  
node [scale= 46/\c]  at (\a/\c,\b/\c)  {$\c$}
;
\foreach \a/\b/\c/\d  in {
65/ 16/ 70/ 64,
65/ -16/ 70/ 64,
10/ 24/ 75/ 9,
10/ -24/ 75/ 9,
 26/ 6/ 79/ 9,
26/ -6/ 79/ 9,
29/ 12/ 82/ 12,
59/ 28/ 82/ 52,
29/ -12/ 82/ 12,
59/ -28/ 82/ 52,
50/ 42/ 87/ 49,
82/ 18/ 87/ 81,
50/ -42/ 87/ 49,
82/ -18/ 87/ 81,
35/ 36/ 90/ 28,
35/ -36/ 90/ 28,
26/ 12/ 91/ 9,
26/ 40/ 91/ 25,
26/ -12/ 91/ 9,
26/ -40/ 91/ 25,
41/ 24/ 94/ 24,
41/ -24/ 94/ 24,
82/ 36/ 99/ 81,
82/ -36/ 99/ 81
}
\draw [thin] (\a/\c,\b/\c) circle (1/\c)  
           (\a/\c,-\b/\c) circle (1/\c);
\node [scale= 3]  at (.22,.58)  {\sf -2}; 
\end{tikzpicture}
\caption{Two other examples of irreducible integral packings. 
The curvature of the exterior disk at the top is $-6$, at the bottom: $-2$.}
\label{fig:A2}
\end{figure}
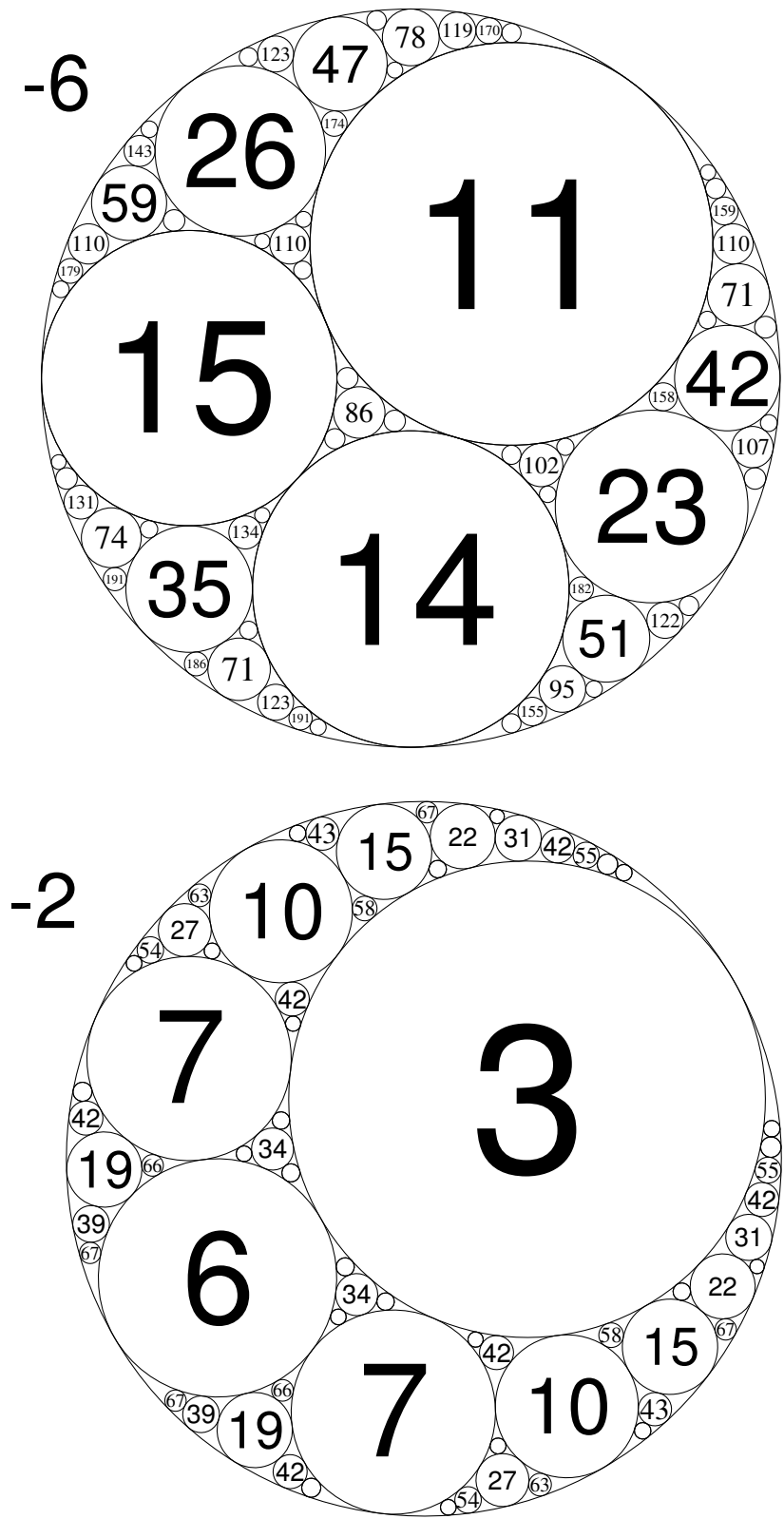

\section{Curious sums} 

Irreducible integral Apollonian disk packings
(see Figures \ref{fig:A1} and~\ref{fig:A2} for examples),
have many interesting number-theoretic properties,  
many of which have been analyzed in various contexts \cite{Lagarias,Lagarias+,Stange}.

But the following feature seems to be overlooked:
the sum of two curvatures of any two disks in contact is always 
a number that can be represented as a sum of squares.
Here are examples read off from the Apollonian Window (Fig.~\ref{fig:A1}):
$$
-1+6 = 5 = 1^2+2^2
\qquad
3+6= 9 = 0^2+3^2
\qquad
6+23 =29=5^2+2^2 
$$
We may reduce this statement to the Descartes configurations,
i.e., configurations of four mutually tangent disks,
and pose it as a purely number-theoretic statement, 
without reference to geometry:

\begin{proposition} 
\label{thm:sum1}
\sf
If a quadruple of non-negative integers $a,b,c,d$ is primitive, i.e, such that  $\gcd(a,b,c,d)=1$, 
and satisfies the quadratic equation:
\begin{equation}
\label{eq:sum1}
(a+b+c+d)^2 = 2(a^2+b^2+c^2+d^2)
\end{equation}
then the sum of any two is a sum of two squares:
\begin{equation}
\label{eq:sum2}
a+b = p^2+q^2
\end{equation}
for some $p, q\in\mathbf Z$.
\end{proposition}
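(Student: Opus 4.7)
My plan is to attempt a direct, elementary proof of the proposition rather than to defer to the main theorem of Section~4. The strategy combines a Vieta-type identity extracted from the Descartes equation with a valuation argument in the spirit of the Gaussian-integer proof of the sum-of-two-squares theorem.

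The first step is to view~(\ref{eq:sum1}) as a quadratic in the single variable $a$ with $b,c,d$ held fixed. This produces a conjugate root $a' = 2(b+c+d)-a$, and a short Vieta computation gives the key identity
\[
  (a+b)(a'+b) \;=\; (2b)^2 + (c-d)^2.
\]
The right-hand side is manifestly a norm from $\mathbb{Z}[i]$, which is what will let us propagate sum-of-two-squares structure down to $a+b$ itself.

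The second step is to verify that $v_\ell(a+b)$ is even for every prime $\ell \equiv 3 \pmod{4}$, after which the classical sum-of-two-squares theorem concludes the proof. Suppose such an $\ell$ divides $a+b$. Substituting $a \equiv -b \pmod{\ell}$ into~(\ref{eq:sum1}) collapses it modulo $\ell$ to $(2b)^2+(c-d)^2 \equiv 0 \pmod{\ell}$; since $-1$ is a nonsquare modulo such an $\ell$, this forces both $\ell \mid b$ (hence $\ell \mid a$) and $\ell \mid c-d$. Primitivity then intervenes: if $\ell$ also divided $c$, it would divide $d$, contradicting $\gcd(a,b,c,d)=1$, so $\ell \nmid c$ and in turn $a' \equiv 4c \not\equiv 0 \pmod{\ell}$. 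Hence $v_\ell(a'+b)=0$, and the identity above yields $v_\ell(a+b) = 2\min(v_\ell(b), v_\ell(c-d))$, which is manifestly even.

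The only delicate point I anticipate is the clean invocation of primitivity, exactly once, to prevent $\ell$ from dividing $a'$ as well---without it, $v_\ell(a'+b)$ could contribute oddly and break the parity. As a backup, one may instead cite the main theorem of Section~4: by the defining relation $p^2+q^2 = a+b$ of the tangency spinor, integrality of the spinor immediately yields~(\ref{eq:sum2}). This is apparently the route the author has in mind, and my direct approach is in effect a Gaussian-integer reflection of the same underlying structure.
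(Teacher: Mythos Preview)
Your argument is correct and, in fact, supplies precisely the direct number-theoretic proof that the paper explicitly declines to give. The paper's route is entirely indirect: it proves Proposition~\ref{thm:sum1} only as Corollary~\ref{thm:sum-done} to the main hyper-integrality theorem, by running the descending algorithm on tricycles down to the base configuration $(0,0,1)$, realizing that configuration with integral spinors, and then climbing back up. The sum-of-two-squares statement then drops out from $\|u\|^2=a+b$ for an integral spinor~$u$. Your Vieta identity $(a+b)(a'+b)=(2b)^2+(c-d)^2$ bypasses all of this geometry and reduces the question to a clean $\ell$-adic parity check; the single use of primitivity to block $\ell\mid c$ (and hence to force $v_\ell(a'+b)=0$) is exactly the right leverage point. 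The paper's machinery buys much more---integrality of all symbols and spinors simultaneously---while your approach buys a self-contained two-paragraph proof of this particular proposition, which the author flags as ``an interesting challenge.''

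Two minor remarks. First, you should dispose of the degenerate case $a+b=0$ (i.e.\ $a=b=0$) at the outset, since then $(2b)^2+(c-d)^2$ may vanish and the valuation bookkeeping becomes vacuous; of course $0=0^2+0^2$ trivially. Second, your computation $a'+b\equiv 4c\pmod{\ell}$ already shows $a'+b\neq 0$ whenever $\ell\mid a+b$ for some $\ell\equiv 3\pmod 4$, so no further edge case arises there.
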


In principle, the property \eqref{eq:sum2} should be provable 
in purely number-theoretic manner from condition \eqref{eq:sum1}.
We leave it as an interesting challenge. 
A verification of this property will  arise as a byproduct of the main result 
in these notes, see Corollary \ref{thm:sum-done}. 
The condition of non-negativity may be relaxed to at most one of $a,b,c,d$ being negative.

~\\
{\bf Remark on the sums of squares:}  
%
Sums of squares were studied by Euler. 
Here are the cases up to 100:  
 1,   2,   4,   5,   8,  9,  10, 13, 16, 17, 18,
20, 25, 26, 29, 32, 34, 36, 37, 40, 41, 45,
49, 50, 52, 53, 58, 61, 64, 65, 68, 72, 73,
74, 80, 81, 82, 85, 89, 90, 97, 98, 100.
 ~ [43 out of 100].
None of the remaining 57 numbers may be found in any integral packings.
The frequency of numbers that can be found 
is decreasing with the magnitude, as 
described by Landau's theorem \cite{Landau,Shanks}.
The density of such numbers among integers drops with $n$
$$
\frac{|S(n)|}{n}  \ \approx\  \frac{b}{\sqrt{\ln n}}  \quad\hbox{for large } n\,,
$$
where $b= 0.764223...$ is known as the Landau-Ramanujan constant 
and $S(n)$ denotes the set of all integers not exceeding $n$ that 
are sums of two squares.
Only one-fifth of the first million numbers can be represented as the sum of squares,
only one-sixth in the first billion, 
and only one in seven in the first trillion.

\section{Terminology and basic facts}

\begin{figure}[h]
\centering
\begin{tikzpicture}[scale=1]
\draw [color=white, fill=gold!30] (-1.2,-1.2) rectangle (1.2,1.2);
\draw [thick,  fill=white] (0,0) circle (1);
\draw [thick, fill=gold!30] (-1/2,0) circle (1/2);
\draw [thick, fill=gold!30] (1/2,0) circle (1/2);
\node at (-1/2, 0) [scale=.9, color=black] {\sf 2};
\node at (1/2, 0) [scale=.9, color=black] {\sf 2};
\node at (.9, .9) [scale=.9, color=black] {\sf -1};
\end{tikzpicture}
\quad
\begin{tikzpicture}[scale=1.7, rotate=30]
\draw [thick,  fill=gold!30] (1/2,0) circle (1/2);
\draw [thick, fill=gold!30] (0,2/3) circle (1/3);
\draw [thick, fill=gold!30] (3/6,4/6) circle (1/6);
\node at (1/2, 0) [scale=.9, color=black] {\sf 2};
\node at (0, 2/3) [scale=.9, color=black] {\sf 3};
\node at (3/6, 4/6) [scale=.9, color=black] {\sf 6};
\end{tikzpicture}
\quad
\begin{tikzpicture}[scale=.7]
\draw [color=white, fill=gold!30] (-2,-1.7) rectangle (2,1.7);
\draw [color=white, fill=white] (-2,-1) rectangle (2,1);
\draw [thick] (-1.7,-1) -- (1.7,-1);
\draw [thick] (-1.7,1) -- (1.7,1);
\draw [thick,  fill=gold!30] (0,0) circle (1);
\node at (0, 0) [scale=.9, color=black] {\sf 1};
\node at (1, 1.27) [scale=.9, color=black] {\sf 0};
\node at (1, -1.27) [scale=.9, color=black] {\sf 0};
\end{tikzpicture}
\quad
\begin{tikzpicture}[scale=.45]
\draw [color=white, fill=gold!30] (-3,-3) rectangle (3,3);
\draw [thick,  fill=white] (0,0) circle (2.1547);
\draw [thick, fill=gold!30] (1, -.57735) circle (1);
\draw [thick, fill=gold!30] (-1, -.57735) circle (1);
\draw [thick, fill=gold!30] (0,  1.1547) circle (1);
\node at (1,  -.57735) [scale=.9, color=black] {\sf 1};
\node at (-1, -.57735) [scale=.9, color=black] {\sf 1};
\node at (0,  1.1547) [scale=.9, color=black] {\sf 1};
\node at (2, 2.45) [scale=.8, color=black] {$\sf 3 - 2\sqrt{\sf 3}$};
\end{tikzpicture}
\caption{Examples of tricycles. The first three are proper integral. the last, (1,1,1),  is not.}
\label{fig:tricycles}
\end{figure}
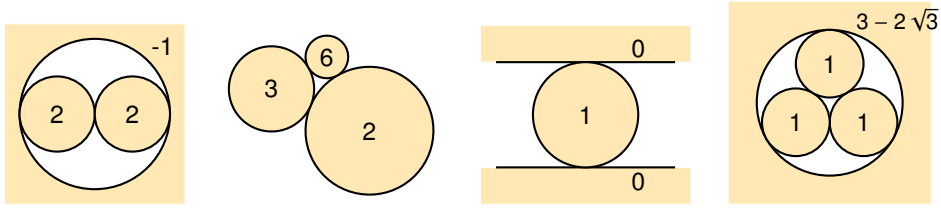


\begin{itemize}
 \setlength\itemsep{-.22em}
\item
A disk and its curvature 
 will usually be denoted by the same (capital) letter, like $A$, $B$, $C$, etc. 
 The disk that extends outside the  circle bounding it has a negative curvature.
A {\bf disk symbol} of a disk $A$ is fraction-like entity 
\begin{equation}
\label{eq:symbol}
\mathbf A = \frac{\dot x, \dot y}{A}
\end{equation}
The meaning (decoding) of the symbol of disk $A$ is 
$$
\hbox{the center} = (x,y) =  \left(\frac{\dot x}{A},\, \frac{\dot y}{A}\right)\,,
\qquad 
\hbox{radius} = \frac{1}{A}
$$
The symbol may be extended to 
 the vector $\mathbf A$ in Minkowski space \cite{jk-Descartes,jk-simple}
that will be presented in two ways:
\begin{equation}
\label{eq:symbol1}
\frac{\dot x,\, \dot y}{A}
\ \mapsto \ 
\frac{\dot x,\, \dot y}{A, A^c}\ 
\ = \  
\begin{bmatrix} \dot x\\ \dot y\\A\\ A^c
\end{bmatrix} \,,
\end{equation} 
where
 $\dot x=x/r=Ax$, \   $\dot y = y/r = Ay$ will be called {\bf reduced coordinates}, 
the curvature $A=1/r$ is the reciprocal radius (possibly negative if the disk extends outside a circle),
and $A^c$, which coincides with the curvature of the disk under inversion in the unit circle,
will be called {\bf co-curvature} of the disk 
 (we follow the terminology of \cite{jk-Descartes}). 
The inner product in the Minkowski space corresponds to the quadratic form 
$Q=-\dot x^2 -\dot y^2 +A A^c$ and can be expressed by Gram matrix $G$:
\begin{equation}
\label{eq:G}
G = \begin{bmatrix} 
-1 & 0 & 0 & 0\\
0 &-1 & 0 & 0\\
0 & 0 & 0 & 1/2\\
0 & 0 & 1/2 & 0
\end{bmatrix}
\end{equation}
By a change of basis we get coordinates that are standard 
in describing space-time in physics
(index ``ST'' may stand for both ``standard'' and ``space-time''):
\begin{equation}
\label{eq:xyzt}
\begin{bmatrix} \dot x\\ \dot y\\ z\\ t \end{bmatrix}_{\sf ST}
\quad  = \quad
\begin{bmatrix} \dot x\\ \dot y\\ (A-A^c)/2\\ (A+A^c)/2 \end{bmatrix}_{\sf ST}
\qquad 
G_{\sf ST} = \begin{bmatrix} 
-1 & 0 & 0 & 0\\
0 &-1 & 0 & 0\\
0 & 0 & -1 & 0\\
0 & 0 & 0 & 1
\end{bmatrix}
\end{equation}

\item
An arrangement of disks is {\bf integral} 
if the curvatures of the disks are integers.
It is called {\bf primitive} if additionally 
their greatest common divisor is 1.
It is {\bf superintegral} if the symbols \eqref{eq:symbol1} are integral
and {\bf hyperintegral} if also tangency spinors (see below) are integral.

\item
A {\bf Descartes configuration} is a system of four mutually tangent disks.
Descartes' formula relates the curvatures:
\begin{equation}
\label{eq:Descartes}
(A+B+C+D)^2 = 2(A^2 + B^2 + C^2 + D^2)
\end{equation}
It follows \cite{Soddy} that, given $A$, $B$, $C$, one has two solutions for $D$:
\begin{equation}
\label{eq:Descartes2}
D_{1,2} \ = \ A+B+C \pm 2\sqrt{(AB+BC+ CA}
\end{equation}
with $D_1$ and $D_2$ called Descartes {\bf conjugates} through $A,B,C$.

\item
A {\bf tricycle} is the system of three mutually tangent disks.
A {\bf Descartes completion} of a tricycle is the Descartes configuration containing the tricycle
as a subset (one of the two possible). 
A tricycle is {\bf proper} if it is primitive and its Descartes completion is integral.
Triple $(1,1,1)$ is integral yet not proper, since the fourth disk 
of its Descartes completion is  
$$
D \ = \  1+1+1 \pm \sqrt{1\!\cdot\! 1 \!+\! 1\!\cdot\! 1 \!+\! 1\!\cdot\! 1} = 3\pm 2\sqrt{3}\,,
$$
which is evidently non-rational.  
We also define the weight of the tricycle $t$ of curvatures $(A,B,C)$ as
$$
         w(t) \ = \ A+B+C 
$$
Note that any tricycle has positive weight.
The smallest possible  weight of an integral tricycle  is 1,
assumed by one presented in Figure \ref{fig:tricycles}. 

\item
The {\bf Apollonian completion} of a tricycle or Descartes configuration 
is the unique Apollonian disk packing 
containing it.
 \item
Apollonian completion of any integral (primitive) Descartes configuration is 
an integral (primitive) Apollonian disk packing.
This is true also for {\bf proper} (primitive) tricycles.

\item
A {\bf tangency spinor} is defined for any two tangent disks $A$ and $B$ as follows. 
Consider the plane as the complex plane
and the vector joining the center of $A$ to the center of $B$ 
as a complex number $z\in \mathbb C$.
The tangency spinor oriented from $A$ to $B$ is defined up to the sign as
\begin{equation}
\label{eq:spindef}
\spin(A,B) \ = \ \pm\sqrt{\frac{z}{r_A\,r_B}\;}
\end{equation}
where $r_A=1$ and $r_B$ are radii of the disks (possibly negative).
We will view it interchangeably either as a complex number or as a vector in $\mathbb R^2$:
\begin{equation}
\label{eq:spinnotation}
u=\tspin(A,B) = m+ni = \begin{bmatrix}
                   m \\ 
                   n
\end{bmatrix} \,,
\quad
u^+ \! =\tspin(B,A) =  -n+mi = \begin{bmatrix}
                   -n \\ 
                   m
\end{bmatrix} 
\end{equation}
(both up to a sign).
Using circle symbols
and following our convention of using the same symbol for a disk and its curvature, 
one may easily derive the formula
$$
u^2 \ = \ 
           \begin{vmatrix}
            A & B\\
            \dot x_A & \dot x_B
            \end{vmatrix}
          \  + \ 
            \begin{vmatrix}
            A & B\\
            \dot y_A & \dot y_B
            \end{vmatrix}
            \; i
$$
Or, equivalently, denoting 
$$
\Delta_{AB}=\left|\begin{matrix}   A  & B \\
                                               \dot x_A & \dot x_B  \end{matrix}\right|
\qquad
s \ = \ \hbox{sign of\  } \left(\;\left|\begin{matrix}   A  & B \\
                                                                             \dot y_A & \dot y_B \end{matrix}\right|\; \right)
$$
the spinor oriented from $A$ to $B$ can be calculated as
\begin{equation}
\label{eq:spincalc}
\spin(A,B) \ =  \ \sqrt{\dfrac{A+B+\Delta_{AB}}{2}}  
                           \; + \;
                           s\cdot\sqrt{\dfrac{A+B-\Delta_{AB}}{2}}\, i
\end{equation}
In any Apollonian disk packing, if any  two adjacent spinors,
i.e., spinors $\spin(A,B)$ and $\spin(A,C)$ for some tricycle in the packing,
are integral then all spinors in the packing are integral.
For more see \cite{jk-spinors,jk-Clifford,jk-corona,jk-lattices}.        
\end{itemize}

\section{Hyper-integrality --- the main result}
\label{sec:main}
Here is the main theorem:

\begin{theorem}
\label{thm:main}
\sf
For every irreducible integral Descartes configuration (and consequently Apollonian disk packing)
there exists a positioning in the Euclidean plane 
such that the following are integral:
\begin{enumerate}
\item
all entries of the disk symbols: curvatures (by assumption), co-curvatures, and reduced coordinates
$A, A^c, \dot x, \dot y$; \\[-19pt] 
\item
tangency spinors  $u=[m,n]^T$;  \\[-19pt]

\item 
disk vectors in the Minkowski space in the standard space-time coordinates,
i.e., $z$ and $t$ of \eqref{eq:xyzt}. 
\end{enumerate} 
\end{theorem}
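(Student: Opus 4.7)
The strategy is to position a single base tricycle of the packing so that its reduced coordinates and one of its spinors are integral, and then to propagate that integrality throughout the Apollonian completion using the Descartes flip together with the spinor-propagation principle stated at the end of Section 3. Concretely, I would select a tricycle $\tau=(A,B,C)$ of smallest weight, obtained by iteratively replacing the largest curvature in the configuration by its Descartes conjugate as in \eqref{eq:Descartes2}. Using the three parameters of $\mathrm{SE}(2)$, I translate the center of $A$ to the origin and rotate so that the spinor $u=\spin(A,B)$ acquires prescribed integer components $(m,n)$ with $m^2+n^2=A+B$. This is possible because $u^2=(A+B)\,e^{i\theta}$, where $\theta$ is the angle of the $A$-to-$B$ axis, so any integer representation $A+B=m^2+n^2$ fixes $\theta$ up to sign.

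\emph{Base integrality.} With $\tau$ so positioned, the reduced coordinates of $B$ and $C$ become short algebraic expressions in $A,B,C,m,n$, which I would check to be integral using \eqref{eq:spincalc}. Co-curvatures follow from the Minkowski-norm identity $AA^c=\dot x^2+\dot y^2-1$, subject to the divisibility $A\mid \dot x^2+\dot y^2-1$; this divisibility is a consequence of \eqref{eq:Descartes}. The fourth disk of the Descartes configuration then inherits full integrality through the Vieta relation $\mathbf D+\mathbf D'=2(\mathbf A+\mathbf B+\mathbf C)$, which holds componentwise on the Minkowski vectors of a Descartes quadruple.

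\emph{Propagation.} Every disk of the packing is reached from $\tau$ by a finite sequence of Descartes flips, each preserving integrality of $(\dot x,\dot y,A,A^c)$ by the same linear Vieta relation. Integrality of every spinor in the packing then follows from the principle quoted at the close of Section 3, once the base tricycle's spinors are integral. Finally, integrality of the standard space-time coordinates $z=(A-A^c)/2$ and $t=(A+A^c)/2$ reduces to the parity condition $A\equiv A^c\pmod 2$ for every disk, which I would establish by a modular reduction of the Descartes equation together with the primitivity hypothesis.

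The principal obstacle is justifying the sum-of-squares decomposition $A+B=m^2+n^2$ for the base tricycle without invoking Proposition \ref{thm:sum1}, which is itself a corollary of the main theorem. I would circumvent this apparent circularity either by descending to a root tricycle, whose finitely many types can be enumerated and checked directly, or by appealing to the independent construction of integral packings from spinor data in the companion paper \cite{jk-lattices}.
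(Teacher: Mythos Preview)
Your plan has a genuine gap beyond the circularity you already flag. Placing the center of $A$ at the origin forces $\dot x_A=\dot y_A=0$, whence the Minkowski norm $-\dot x^2-\dot y^2+AA^c=-1$ gives $A^c=-1/A$; this is integral only if $A=\pm 1$, which your root tricycle need not satisfy. Likewise, with your rotation the reduced coordinates of $B$ become $(m^2-n^2)/A$ and $2mn/A$, and you offer no argument that $A$ divides these. So the ``base integrality'' step is not established even granting the sum-of-squares representation.

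Your first proposed fix --- descend to a root tricycle and enumerate finitely many types --- also fails as stated: Descartes flips alone stay inside a single packing, and the weight-minimal tricycle they reach is the \emph{principal} tricycle of that packing. There are infinitely many irreducible integral packings, hence infinitely many principal tricycles, so there is no finite list to check. This is precisely the point where the paper introduces a second move you do not have: the \emph{self-inversion} through the disk of negative curvature, which jumps from one packing to another while preserving integrality of symbols and spinors and still strictly decreasing the weight. Alternating the two moves drives every tricycle down to the single universal base $(0,0,1)$, which is then positioned explicitly and lifted back by the inverse word. Your approach is missing exactly this cross-packing move; without it the descent does not terminate in a finite set, and the argument cannot be closed without appealing to external results.
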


\begin{proof}
Here is the outline of the proof, 
the details of which are clarified in the following section.
Start with an integral irreducible packing, understood as a structured set of curvatures
(positions are not necessarily integral, or not even known).
Pick an arbitrary tricycle (or Descartes configuration) in it, $A,B,C$.
For convenience, assume an increasing order $A\leq B \leq C$.
Perform a chain of the transformations $(A,B,C) \mapsto (A',B',C')=T(A,B,C)$
defined as a map
\begin{equation}
\label{eq:T}
T(A,B,C) = \begin{cases}
                (-A,\, B\!+\! 2A,\, C\!+\!2A) & \hbox{if}\   A<0 \qquad \hbox{(*)}\\
                (A,\,  B,\,  A\!+\!B\!+\!C\!-2\sqrt{AB\!+\! BC\! +\! CA}) & \hbox{otherwise}  \quad \hbox{(**)}
                \end{cases}
\end{equation}
followed by ordering the resulting triple to $A'\leq B' \leq C'$. 
Map (*) will be called  a ``descending self-inversion'' and is due to 
an inversion in the circle that bounds a disk of negative curvature $A$. 
Map (**) will be called the ``descending Descartes move'' and is due to
replacement of the smallest disk 
by the greater from the two that would complete the triple to Descartes configuration.
Both are described in detail in subsequent material.

In both cases the weight is strictly decreasing,
$$
         w(A',B',C') < w(A,B,C)\,,
$$
and preserves integrality of all ingredients (symbols and spinors).
Hence, the iteration of map $T$ must end up with the lowest-weight triple $(0,0,1)$.
Now, this particular triple may be realized geometrically as shown in Figure~\ref{fig:bottom} below,
with all entries of the symbol and of the tangency spinors being integral.

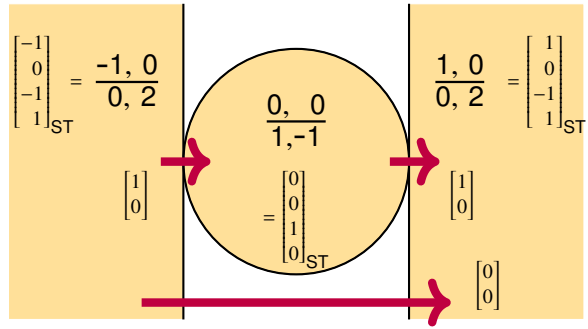
\begin{figure}[H]
\centering
\begin{tikzpicture}[scale=1.5]  
\draw  [fill=gold!40] (0,0) circle (1);
\draw  [color=white, fill=gold!40] (-2.55,-1.4) rectangle  (-1,1.4);
\draw  [color=white, fill=gold!40] (2.55,-1.4) rectangle  (1,1.4);
\draw [thick] (1,-1.4) -- (1,1.4)
       node  [scale=1.4]  at (1.45,.7) {$\frac{\sf 1,\  0}{\sf 0,\ 2}$}
       node  [scale=.7]  at (2.2, 0.7) {$\ =\begin{bmatrix}\ \ 1\\ \ \ 0\\-1\\ \ \ 1
                                                       \end{bmatrix}_{\hbox{\small \sf ST}}$};
\draw [thick] (-1,-1.4) -- (-1,1.4)
       node  [scale=1.4]  at (2.2 -3.7,.7) {$\frac{ \sf \z 1,\  0}{\sf \ 0,\ 2}$}
       node  [scale=.7]  at (1.5-3.7, 0.7) {$\begin{bmatrix*}-1\\ \ \ 0\\ -1\\ \ \ 1
                                                              \end{bmatrix*}_{\hbox{\small\sf ST}}\!\!\!=$};
\draw [thick] (0,0) circle (1)
          node  [scale=1.4]  at (0,.35) {$\frac{\sf 0,\  \  0}{\sf 1,\z 1}$}
         node  [scale=.7]  at (0,-0.5) {$=\begin{bmatrix}0\\0\\1\\0
                                                          \end{bmatrix}_{\hbox{\small \sf ST}}$};
\draw [->, line width=3.2pt, purple]  (.83,0) -- (1.27,0);
\node at (1.47, -.3) [scale=.7] {$\begin{bmatrix} 1 \\  0 \end{bmatrix}$};
\draw [->, line width=3.2pt, purple]  (-1.27+.07, 0) -- (-.83+.07,0);
\node at (-1.42, -.3) [scale=.7] {$\begin{bmatrix} 1 \\  0 \end{bmatrix}$};
\draw [->, line width=3.2pt, purple]  (-1.37, -1.25) -- (1.37, -1.25);
\node at (1.7, -1.1) [scale=.7] {$\begin{bmatrix} 0 \\ 0 \end{bmatrix}$};
\end{tikzpicture}
\caption{The lowest-weight base tricycle $\bar{\mathbf t}_0$in the integral position.}
\label{fig:bottom}
\end{figure}

Applying the inverse of the chain of the transformation to this tricycle
will recreate the original tricycle, but now  in the integral position.
The completion of this tricycle to the Apollonian packing
will result in the hyper-integral Apollonian packing 
with the curvatures that of the original packing.
The details are clarified in the subsequent section.
\end{proof}

\begin{figure}
\centering
\begin{tikzpicture}[auto,node distance=.5cm]
        \node  at (0, 5) {
        \begin{tikzpicture}[scale=5,rotate=77]  
\draw [thick] (3/6,4/6) circle (1/6);
\draw [fill=green!70, thick] (27/42,28/42) circle (1/42);
\draw [fill=green!70, thick] (14/23,14/23) circle (1/23);
\draw [fill=green!70, thick] (70/107, 68/107) circle (1/107);
\foreach \a/\b/\c/\d/\s in {
 6/ 8/ 11/ 94/2,
 7/ 8/ 14/ 8/2,
 6/ 10/ 15/ 9/2,
14/ 14/ 23/ 17/1,
11/ 20/ 26/ 20/1, 14/ 20/ 35/ 17/.7,  27/ 28/ 42/ 36/.7,
22/ 38/ 47/ 41/.7, 30/ 28/ 51/ 33/.7, 22/ 44/ 59/ 41/.7
}
\draw (\a/\c,\b/\c) circle (1/\c)
;
\foreach \a/\b/\c/\d in {
30/ 38/ 71/ 33,  46/ 50/ 71/ 65, 27/ 44/ 74/ 36,
39/ 64/ 78/ 72,   41/ 56/ 86/ 56, 54/ 50/ 95/ 57,
57/ 64/ 102/ 72, 70/ 68/ 107/ 89,  49/ 80/ 110/ 80,
39/ 80/ 110/ 72,  71/ 80/ 110/ 104,  62/ 98/ 119/ 113,
75/ 68/ 122/ 84,  54/ 64/ 123/ 57,  54/ 100/ 123/ 105,
46/ 80/ 131/ 65,  57/ 80/ 134/ 72,  54/ 110/ 143/ 105,
86/ 80/ 155/ 89,  97/ 104/ 158/ 128,  102/ 118/ 159/ 153,
91/ 140/ 170/ 164,  81/ 136/ 174/ 144, 62/ 128/ 179/ 113,
105/ 104/ 182/ 120, 75/ 100/ 186/ 84,  86/ 98/ 191/ 89,
70/ 110/ 191/ 89
}
\draw (\a/\c,\b/\c) circle (1/\c)  
;
\end{tikzpicture}
 };

        \node at (10,5) {
        \begin{tikzpicture}[scale=5]  
\draw [thick] (3/6,4/6) circle (1/6);
\draw [fill=gold!70, thick] (27/42,28/42) circle (1/42);
\draw [fill=gold!70, thick] (14/23,14/23) circle (1/23);
\draw [fill=gold!70, thick] (70/107, 68/107) circle (1/107);
\foreach \a/\b/\c/\d in {
 6/ 8/ 11/ 94,    7/ 8/ 14/ 8,   6/ 10/ 15/ 9,
14/ 14/ 23/ 17,  11/ 20/ 26/ 20,  14/ 20/ 35/ 17,
27/ 28/ 42/ 36,  22/ 38/ 47/ 41,  30/ 28/ 51/ 33,
22/ 44/ 59/ 41,  30/ 38/ 71/ 33,  46/ 50/ 71/ 65,
27/ 44/ 74/ 36,  39/ 64/ 78/ 72,  41/ 56/ 86/ 56,
54/ 50/ 95/ 57,  57/ 64/ 102/ 72,  70/ 68/ 107/ 89,
49/ 80/ 110/ 80,  39/ 80/ 110/ 72,  71/ 80/ 110/ 104,
62/ 98/ 119/ 113,  75/ 68/ 122/ 84,  54/ 64/ 123/ 57,
54/ 100/ 123/ 105,  46/ 80/ 131/ 65,  57/ 80/ 134/ 72,
54/ 110/ 143/ 105,  86/ 80/ 155/ 89,  97/ 104/ 158/ 128,
102/ 118/ 159/ 153,  91/ 140/ 170/ 164,  81/ 136/ 174/ 144,
62/ 128/ 179/ 113,  105/ 104/ 182/ 120,  75/ 100/ 186/ 84,
86/ 98/ 191/ 89,  70/ 110/ 191/ 89
}
\draw (\a/\c,\b/\c) circle (1/\c)  
;

\draw [blue,->, line width=.4mm] (3/5+.012+.004, 4/5+.015+.005)--(3/5-.012-.004, 4/5-.015-.005);
\draw [blue,->, line width=.4mm] (4/8, 4/8-.028)--(4/8, 4/8+.028);
\draw [blue,->, line width=.4mm] (3/9-.02, 6/9)--(3/9+.02, 6/9);
\draw [red,->, line width=.4mm] (12/26 -.02, 18/26-.01)--(12/26+.02, 18/26+.01);
\draw [red,->, line width=.4mm] (13/25 +.01, 16/25+.02)--(13/25-.01, 16/25-.02);
\draw [red,->, line width=.4mm] (13/29 +.012, 18/29-.015)--(13/29-.012, 18/29+.015);
\end{tikzpicture}
};        
                
        \node  at (3, 5) {
        \begin{tikzpicture}[scale=5,rotate=77]  
\draw [fill=green!70] (27/42,28/42) circle (1/42);
\draw [fill=green!70] (14/23,14/23) circle (1/23);
\draw [fill=green!70] (70/107, 68/107) circle (1/107);
\draw [thick] (27/42,28/42) circle (1/42);
\draw [thick] (14/23,14/23) circle (1/23);
\draw [thick] (70/107, 68/107) circle (1/107);
\end{tikzpicture}
 };

        \node  at (3, 4.2) {$\sf (A,B,C)$};
        \node  at (7, 4.2) {$\sf (A',B',C')$};                
                
        \node  at (2.7, 1) {$\bf t_0 = \sf (0,0,1)$};          
        \node  at (8, 1) {$ = \bar{\bf t}_0 $};          
        
        \node  at (7, 5) {
        \begin{tikzpicture}[scale=5]  
\draw [fill=gold!70, thick] (27/42,28/42) circle (1/42);
\draw [fill=gold!70, thick] (14/23,14/23) circle (1/23);
\draw [fill=gold!70, thick] (70/107, 68/107) circle (1/107);
\draw [thick] (27/42,28/42) circle (1/42);
\draw [thick] (14/23,14/23) circle (1/23);
\draw [thick] (70/107, 68/107) circle (1/107);
\end{tikzpicture}
};                                                

        \node  at (7, 1) {
        \begin{tikzpicture}[scale=.3]  
\draw [fill=gold!70] (0,0) circle (1);
\draw [gold!20, fill=gold!25] (-1.6,-1.7) rectangle (-1,1.7);
\draw [gold!20, fill=gold!25] (1, -1.7) rectangle (1.6, 1.7);
\draw [gold!20, fill=gold!35] (-1.4,-1.7) rectangle (-1,1.7);
\draw [gold!20, fill=gold!35] (1, -1.7) rectangle (1.4, 1.7);
      
\draw [black, thick] (0,0) circle (1);
\draw [black, very thick] (-1,-1.7) -- (-1,1.7);
\draw [black, very thick] (1, -1.7) -- (1, 1.7);

\end{tikzpicture}
 };                                                
                
\draw [->, line width=.5mm] (3,3.8) -- (3,1.6) ;  \node at (3.2, 2.7) {$\sf T$};
\draw [<-, line width=.5mm] (7,4) -- (7,1.7);   \node at (7.4, 2.7) {$\sf T^{-1}$};     
\draw [->, line width=.5mm] (4,1) -- (6.2,1);      \node at (5, 1.2) [scale=.7] {\sf geometrization};            


\draw [->,  line width=.5mm] (1,5.4) to [bend left=30] (2.5,5.4);  \node at (1.8,5.9) {\sf pick};
\draw [->,  line width=.5mm] (7.5,5.4) to [bend left=30] (9,5.4);  \node at (8,5.9) {\sf completion};

\node at (0,3.2) [text width=2 cm] {\sf integral Apollonian packing};
\node at (10.2,3.2) [text width=2.2 cm] {\sf hyperintegral Apollonian packing};
                
    \end{tikzpicture}  

\caption{The idea of the proof.  Symbol $T$ stands 
for the composition of the descending transformations, 
and $T^{-1}$ for its inverse. Triple $(A,B,C)$ are curvatures of an arbitrary selection of a tricycle,
and $(A',B',C'$) the tricycle with symbols and spinors being integral.}
\label{fig:idea}
\end{figure}
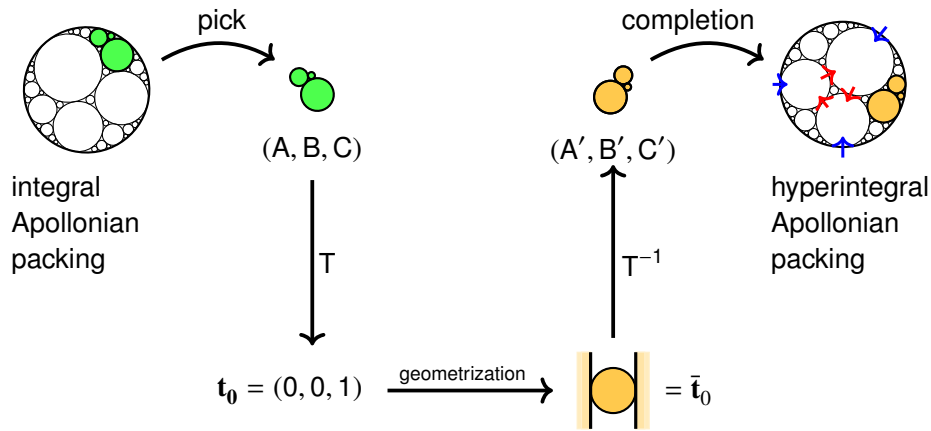

The puzzling property of Proposition \ref{thm:sum1} 
receives an indirect proof:

\begin{corollary}
\label{thm:sum-done}
\sf
Since the tangency spinors in any integral primitive Apollonian packing may be made integral,
and its norm squared is the sum of the corresponding disks,
Proposition \ref{thm:sum1} holds.
\end{corollary}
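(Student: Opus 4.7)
The plan is to reduce the number-theoretic statement of Proposition \ref{thm:sum1} to geometry via Theorem \ref{thm:main}, exploiting the fact that the squared norm of a tangency spinor equals the sum of the two adjacent curvatures. The key identity to unlock is visible from formula \eqref{eq:spincalc}: if $u=\spin(A,B)=m+ni$, then
\begin{equation*}
m^2 + n^2 \;=\; \frac{A+B+\Delta_{AB}}{2} + \frac{A+B-\Delta_{AB}}{2} \;=\; A+B,
\end{equation*}
so whenever a spinor happens to have integer components, the sum of the two curvatures it bridges is automatically a sum of two squares. The task is simply to realize this integrality.

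First I would take a primitive integer quadruple $(a,b,c,d)$ satisfying \eqref{eq:sum1}, noting that by the Descartes theorem this data geometrically realizes (up to Euclidean motions) an integral Descartes configuration of four mutually tangent disks with the prescribed curvatures; the non-negativity hypothesis (relaxed to at most one negative entry) is exactly what ensures that the disks can be drawn consistently in the Euclidean plane. Next I would invoke Theorem \ref{thm:main} to reposition this configuration so that all reduced coordinates, co-curvatures, and tangency spinors become integral — the theorem delivers hyper-integrality for the entire Apollonian completion, so in particular for the original four disks.

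Given any pair of the four disks, say the ones of curvatures $a$ and $b$, they are tangent (Descartes configurations are by definition mutually tangent), so the spinor $\spin(a,b)=m+ni$ exists and, by the hyper-integrality just obtained, has $m,n\in\mathbb Z$. Applying the squared-norm identity above gives
\begin{equation*}
a+b \;=\; m^2+n^2,
\end{equation*}
which is exactly \eqref{eq:sum2}. Since the pair was arbitrary, the statement follows for every pair.

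The only subtle step is the first one: guaranteeing that a primitive integer solution of the Descartes quadratic actually arises from a bona fide Descartes configuration of disks in the plane, rather than merely an algebraic tuple. This is the place where the geometric content enters and where the proof would be most vulnerable to an oversight; it is handled by the classical fact (made explicit in \eqref{eq:Descartes2}) that three mutually tangent disks determine exactly two tangent completions, whose curvatures are the two roots of a quadratic with integer coefficients, and that the primitivity together with the relaxed sign condition ensures the configuration is genuinely Euclidean. Once this point is secured, the remainder of the argument is immediate from Theorem \ref{thm:main} and the spinor norm identity.
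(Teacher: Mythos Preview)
Your proposal is correct and follows exactly the paper's approach: the paper's own ``proof'' is literally the sentence embedded in the corollary statement (invoke Theorem~\ref{thm:main} to make the spinors integral, then use $|u|^2=A+B$), and you have simply unpacked that sentence with the appropriate details, including the spinor-norm identity from~\eqref{eq:spincalc} and the geometric realizability of the quadruple. The subtlety you flag about realizability is handled in the paper by the sign/weight hypothesis, just as you surmise.
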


\begin{corollary}
\sf 
In an irreducible Apollonian disk packing, the curvatures and co-curvatures of each disk 
share the parity, i.e., $2|(A+A^c)$. 
This due to the fact that $t$ and $z$ in the standard basis \eqref{eq:xyzt}
admit integrality too.
\end{corollary}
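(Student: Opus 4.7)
The plan is to derive the corollary as an immediate consequence of part (3) of Theorem \ref{thm:main}. That clause of the main theorem asserts the existence of a positioning in which the standard space-time coordinates
\[
z = \frac{A - A^c}{2}, \qquad t = \frac{A + A^c}{2}
\]
of every disk vector are integral. Once this is granted, the claim $2 \mid (A + A^c)$ reduces to a one-line arithmetic observation: $A + A^c = 2t$ is manifestly even, and consequently $A$ and $A^c$ share the same parity.

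My first step is to fix the hyper-integral placement provided by Theorem \ref{thm:main}. In that positioning the curvature $A$, the co-curvature $A^c$, and the reduced coordinates $\dot x, \dot y$ are all simultaneously integers, and moreover the change of basis from $(A, A^c)$ to $(z, t)$ given by \eqref{eq:xyzt} lands in $\mathbb Z^2$ rather than merely in $(\tfrac12\mathbb Z)^2$. For any disk in the packing I then read off $t$ from the standard-basis representation and conclude $A + A^c = 2t \in 2\mathbb Z$, which is precisely the divisibility statement of the corollary.

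The only conceptual subtlety worth flagging is that the co-curvature $A^c$ is a position-dependent quantity --- it is the curvature of the image of the disk under inversion in the unit circle --- so the statement must be interpreted as applying to the distinguished placement guaranteed by Theorem \ref{thm:main}, not to an arbitrary embedding of the packing. There is no genuine obstacle to overcome in the present argument: all the labor has already been absorbed into the proof of the main theorem, where the integrality of $(z,t)$ --- equivalently, the parity matching of $A$ and $A^c$ --- was established as the third and strongest clause. The corollary is in effect a restatement of that clause in elementary number-theoretic language.
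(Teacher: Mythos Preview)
Your proposal is correct and matches the paper's own reasoning exactly: the corollary in the paper carries no separate proof, and its statement already contains the justification (``This [is] due to the fact that $t$ and $z$ in the standard basis \eqref{eq:xyzt} admit integrality too''), which is precisely your appeal to part (3) of Theorem~\ref{thm:main} followed by the observation $A+A^c=2t$. Your remark that $A^c$ is position-dependent and that the claim refers to the distinguished placement is a welcome clarification that the paper leaves implicit.
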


\begin{figure}[H]
\centering
\begin{tikzpicture}[scale=2.1]  
\draw  [color=white, fill=gold!40] (-1.42,-1.2) rectangle  (1.42,1.34);
\draw  [fill=white] (0,0) circle (1);
\draw  [fill=gold!40] (1/2,0) circle  (1/2);
\draw  [fill=gold!40] (-1/2,0) circle  (1/2);
\draw  
       node  [scale=1.34]  at (1/2,0) {$\frac{\sf 1,\  0}{\sf 2,\ 0}$};
\draw
       node  [scale=1.34]  at (-1/2,0) {$\frac{\sf -1,\  0 \  }{\sf \ 2,\ 0}$};
\draw [thick] (0,0) circle (1)
          node  [scale=1.34]  at (-.92,.91) {$\frac{\sf \ \ 0,\  0 \  }{\sf \z 1,\ 1}$};

\draw [->, line width=3pt, purple]  (.83,0) -- (1.2,0);
\node at (1.22, -.3) [scale=.7] {$\begin{bmatrix} 1 \\  0 \end{bmatrix}$};
\draw [->, line width=3pt, purple]  (-1.19, 0) -- (-.79,0);
\node at (-1.17, -.3) [scale=.7] {$\begin{bmatrix} 1 \\  0 \end{bmatrix}$};
\draw [->, line width=3pt, purple]  (-.15, 0) -- (.2, 0);
\node at (0, -.4) [scale=.7] {$\begin{bmatrix} 2 \\ 0 \end{bmatrix}$};
\end{tikzpicture}

\caption{A tricycle generating the Apollonian Window, 
the image of inversion of which is the lowest weight tricycle, see Fig.~\ref{fig:bottom}.}
\label{fig:bottom1}
\end{figure}
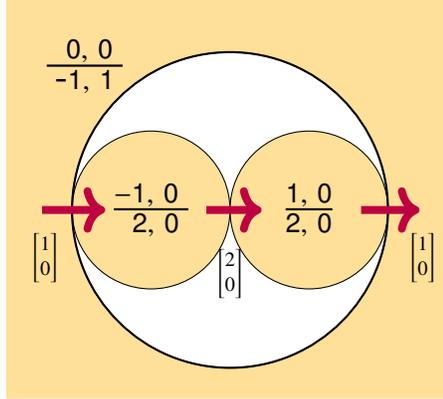

\begin{remark}
The symbols and spinors of the base tricycle shown in Figure~\ref{fig:bottom}
are slightly challenging due to the presence of half-plane disks.  
Hence, it is convenient to 
start with a clear tricycle shown in Figure \ref{fig:bottom1} 
(known from the Apollonian Window) and apply inversion through the main circle $(0,0)/(-1,1)$.
\end{remark}

\section{Tricycle walks}

In this section we describe the two types of moves of \eqref{eq:T}
in the space of tricycles.

\subsection{Tricycle self-inversions}  
\label{sec:si}

Every disk $X$ defines a circle $\del X$, its boundary.
As such, it defines an inversion through this circle, $\Inv_{\del X}$.
It can be shown that  
inversion of disk $C$ in circle $K$
corresponds in the Minkowski space to reflection of vector $\mathbf C$
in the hyper-plane $\mathbf K^\bot$ orthogonal to vector $\mathbf K$. 
The well-known formula for such reflection in the Euclidean or pseudo-Euclidean space is
$$
\mathbf C \quad  \mapsto \quad \Inv_K(\mathbf C) = 
\mathbf C - 2\frac{\langle \mathbf K, \mathbf C\rangle}{\langle \mathbf K, \mathbf K\rangle} \mathbf K
$$
Since the norm squared of disks is $(-1)$, the formula for disks reduces to
$$
\mathbf C\quad \mapsto \quad \Inv_K(\mathbf C) =
\mathbf C + 2\langle \mathbf K, \mathbf C\rangle\,  \mathbf K
$$
In addition, if disk $C$ and $K$ are tangent, the formula reduces further to
\begin{equation}
\label{eq:invAformula}
\Inv_K: \ \mathbf C\ \mapsto \ 
\mathbf C + 2 \mathbf K
\end{equation}
In the symbol-like notation:
\begin{equation}
\label{eq:invformula}
\frac{\dot x_C,\, \dot y_C}{C, C^c}\ \mapsto \
\frac{\dot x_C+2 \dot x_K,\, \dot y_C+2\dot x_K}{C+2K,\; C^c+2K^c}
\end{equation}
If  both disks are  integral, so is the image $C'$.

\begin{definition}
{\bf Self-inversion} of a tricycle $(A,B,C)$ is an inversion through 
one of its circles $\del A$, $\del B$, or $\del C$.
{\bf A descending self-inversion} is defined as 
the inversion through the circle bounding the disk of negative curvature 
if such exists among $(A,B,C)$.  Otherwise, it is not defined.
\end{definition}

Following \eqref{eq:invformula}, the curvatures of a tricycle are transformed 
by self-inversion through $\del A$ as follows
\begin{equation}
\label{eq:2A}
(A,B,C) \quad \xrightarrow{~~\Inv_{\del A} ~~} \quad (-A, B+2A, C+2A)
\end{equation}
Assuming the first disk being negative, the above describes the descending self-inversion. 
\\
\\
{\bf Remark:} The self-inversive moves define a 3-valent graph, and the {\it descending} self-inversive move
defines a digraph based on the same graph.

\begin{proposition} 
\sf
Under a descending self-inversive transformation, the weight function is strictly decreasing.
\end{proposition}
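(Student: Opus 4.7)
The plan is to carry out a direct computation using formula \eqref{eq:2A}, which already gives the action of the descending self-inversion on the curvatures. Recall that, by definition, a descending self-inversion is applied through $\partial A$ where $A<0$ is the negative curvature in the tricycle $(A,B,C)$. So the strategy is simply to compute $w(A',B',C')$ in terms of $w(A,B,C)$ and observe that the difference has a definite sign determined by the hypothesis $A<0$.

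More precisely, I would first invoke \eqref{eq:2A} to write
\[
(A',B',C') \;=\; (-A,\, B+2A,\, C+2A).
\]
Then the weight of the image is
\[
w(A',B',C') \;=\; -A + (B+2A) + (C+2A) \;=\; (A+B+C) + 2A \;=\; w(A,B,C) + 2A.
\]
Since $A<0$ by the descending hypothesis, $2A<0$, so $w(A',B',C') < w(A,B,C)$, which is the claim.

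There is essentially no obstacle here: the only minor subtlety is verifying that the image $(A',B',C')$ is still a legitimate tricycle (so that the weight function is meaningfully defined on it). This is automatic, however, since self-inversion is a geometric inversion in the circle $\partial A$, and inversion maps any tricycle to a tricycle. In particular, the positivity of $w(A',B',C')$ noted in Section~3 is preserved, which also tells us that iteration of the map must terminate at a tricycle with no negative curvature. This last observation is what links the present proposition to the termination argument in the proof of Theorem~\ref{thm:main}, where descending self-inversion is alternated with the descending Descartes move until one arrives at the base tricycle $\bar{\mathbf t}_0$.
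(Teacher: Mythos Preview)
Your proof is correct and essentially identical to the paper's own argument: both simply compute $w(A',B',C') = (-A)+(B+2A)+(C+2A) = w(A,B,C)+2A$ via \eqref{eq:2A} and conclude from $A<0$. Your additional remarks about the image remaining a tricycle and the link to termination in Theorem~\ref{thm:main} are accurate but go beyond what the paper records for this proposition.
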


\begin{proof}
A simple verification: 
denoting $\mathbf t =(A,B,C)$ and $\mathbf t'=T(\mathbf t)=( A',B', C')$, we have
$$
w(\mathbf t')  = A'+B'+C' = (-A) + (B \!+\!2A) + (C\!+\!2A) = w\left( T(\mathbf t)\right) + 2A
$$
Since descending self-inversion is defined for $A<0$, the claim holds.
\end{proof}

\begin{corollary}
\sf 
Eq. \eqref{eq:2A} implies that the image of a proper tricycle through a self-inversion is proper.
\end{corollary}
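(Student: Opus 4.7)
The plan is to unpack the definition of \emph{proper} into its two components and verify each is preserved by the self-inversion $(A,B,C) \mapsto (A',B',C') = (-A,\, B+2A,\, C+2A)$. Recall that proper means (i) primitive, i.e., $\gcd(A,B,C)=1$, and (ii) the Descartes completion is integral, which by formula \eqref{eq:Descartes2} amounts to $AB+BC+CA$ being a perfect square.

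For primitivity, I would argue contrapositively: any common divisor $d$ of $-A$, $B+2A$, $C+2A$ divides $A$ (from the first), hence divides $B = (B+2A)-2A$ and $C = (C+2A)-2A$, so $d \mid \gcd(A,B,C) = 1$. This step is essentially immediate.

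The key computation is to check that the discriminant $AB+BC+CA$ under the square root in \eqref{eq:Descartes2} is invariant. Expanding directly,
\begin{align*}
A'B' + B'C' + C'A' &= (-A)(B+2A) + (B+2A)(C+2A) + (C+2A)(-A) \\
                    &= (-AB - 2A^2) + (BC + 2AB + 2AC + 4A^2) + (-AC - 2A^2) \\
                    &= AB + BC + CA.
\end{align*}
Hence $A'B'+B'C'+C'A'$ is the same perfect square as before, and the two Descartes conjugates of the transformed tricycle are integers. Combined with primitivity, this gives properness of $(A',B',C')$. The case of self-inversion through $\partial B$ or $\partial C$ follows by the evident symmetry of \eqref{eq:2A} under relabeling.

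There is no real obstacle here; the only thing to be careful about is that \eqref{eq:2A} is stated specifically for inversion through $\partial A$, so one should either invoke symmetry explicitly or remark that the expression $AB+BC+CA$ is fully symmetric in the three curvatures, so the same algebraic identity holds regardless of which disk plays the role of the inverting circle. Conceptually, this is unsurprising: inversion is a Möbius transformation sending Descartes configurations to Descartes configurations, so the integrality of the completion is a feature of the quadruple's algebraic structure, preserved under the $\mathbb{Z}$-linear action \eqref{eq:invAformula}.
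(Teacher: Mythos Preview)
Your proof is correct. The paper gives no explicit argument for this corollary, treating it as an immediate consequence of \eqref{eq:2A}; you have simply supplied the details the paper leaves to the reader --- the gcd argument and the invariance of $AB+BC+CA$ --- which is exactly the verification the statement demands.
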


\begin{figure}
\centering
\includegraphics[scale=.57]{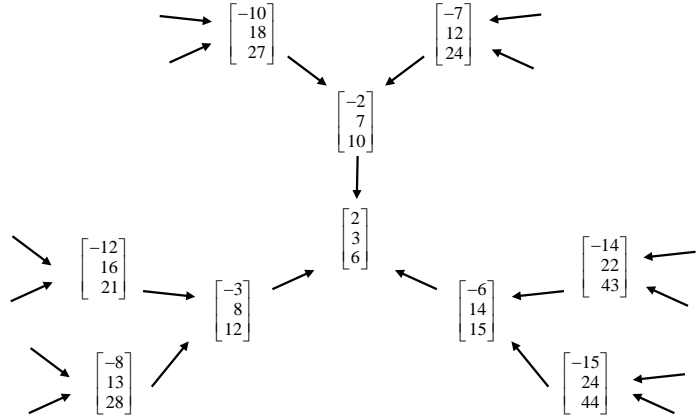}
\caption{The digraph of tricycles with the relation defined by descending self-inversion.
Note that $[2,3,6]$ is the terminal vertex.}
\label{fig:236}
\end{figure}

\begin{proposition}
\sf 
Under self-inversion, spinors preserve the integrality.
\end{proposition}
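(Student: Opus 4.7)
The plan is to fix (without loss of generality) the self-inversion $\Inv_{\del A}$ through the boundary of the first tricycle disk, and to track the three spinors $u_{AB}$, $u_{AC}$, $u_{BC}$ under the induced transformation $\mathbf A\mapsto-\mathbf A$, $\mathbf B\mapsto\mathbf B+2\mathbf A$, $\mathbf C\mapsto\mathbf C+2\mathbf A$ from \eqref{eq:invAformula}. In complex-center notation $z_X=x_X+iy_X$ this reads $z_{A'}=z_A$, $B'z_{B'}=Bz_B+2Az_A$, $C'z_{C'}=Cz_C+2Az_A$; and since the squared spinor admits the compact form $u^2_{XY}=XY(z_Y-z_X)$, every new spinor-square can be obtained by direct substitution.

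First I would handle the two spinors sharing the pivot disk. A short expansion gives
\[
u^2_{A'B'}=(-A)(B+2A)(z_{B'}-z_A)=-A\bigl[(Bz_B+2Az_A)-(B+2A)z_A\bigr]=-AB(z_B-z_A)=-u^2_{AB},
\]
and symmetrically $u^2_{A'C'}=-u^2_{AC}$. Hence $u_{A'B'}=\pm i\,u_{AB}$ and $u_{A'C'}=\pm i\,u_{AC}$, which remain Gaussian integers whenever $u_{AB}$ and $u_{AC}$ are. This settles integrality for the two spinors meeting at $A'$.

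For the remaining spinor $u_{B'C'}$ an analogous substitution yields the identity $u^2_{B'C'}=u^2_{BC}+2(u^2_{AC}-u^2_{AB})$, but extracting a Gaussian-integer square root from this combination is not automatic. The cleanest route is instead to invoke the propagation fact recalled at the end of Section 3 (from \cite{jk-spinors,jk-Clifford,jk-corona,jk-lattices}): once two spinors of a tricycle sharing a common disk are integral, every spinor of the ambient Apollonian packing is integral. Applied to the pair $u_{A'B'},u_{A'C'}$ meeting at $A'$, this immediately gives $u_{B'C'}\in\mathbb Z[i]$, closing the argument for the whole image tricycle.

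The main obstacle is precisely this third spinor. A self-contained treatment would have to combine the quadratic identity above with the tricycle relation $Cu^2_{AB}+Au^2_{BC}+Bu^2_{CA}=0$ and then verify, by a parity/sign analysis, that the specific Gaussian integer $u^2_{B'C'}$ is an actual square in $\mathbb Z[i]$; the propagation lemma bypasses this square-root extraction entirely, which is why I would lean on it here.
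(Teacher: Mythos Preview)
Your proof is correct and follows essentially the same route as the paper: both compute, for a spinor from the pivot disk $A$ to a neighbor, that $u_{A'B'}^2=-u_{AB}^2$ (you via the complex-center identity $u_{XY}^2=XY(z_Y-z_X)$, the paper via the equivalent determinant form), concluding $u_{A'B'}=\pm i\,u_{AB}$ and hence integrality is preserved.

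Where you differ is in thoroughness: the paper only treats the pivot--neighbor case and stops there, leaving the spinor $u_{B'C'}$ unaddressed. You explicitly derive $u_{B'C'}^2=u_{BC}^2+2(u_{AC}^2-u_{AB}^2)$, correctly note that integrality of the square does not automatically yield a Gaussian-integer root, and then close the gap by invoking the two-adjacent-spinors propagation fact from Section~3. That invocation is legitimate and is exactly the mechanism the paper relies on elsewhere (cf.\ the proof of the analogous statement for Descartes moves), so your treatment is in fact more complete than the paper's own proof of this proposition.
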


\begin{proof}
Given mutually tangent disks $A$ and $B$ of symbols (see Figure~\ref{fig:spinInverted})
$$
\mathbf A = \frac{\dot x_A, \; \dot y_A}{A,\; A^*} 
\qquad\hbox{and}\qquad
\mathbf B = \frac{\dot x_B, \; \dot y_B}{B,\; B^*} \;,
$$
the self-inversion through $A$ results in disks of symbols
$$
\mathbf A' =\hbox{Inv}_A(\mathbf A) =  \frac{-\dot x_A, \; -\dot y_A}{-A} 
\quad\hbox{and}\quad
\mathbf B' =\hbox{Inv}_A(\mathbf B) =  \frac{\dot x_B\!+\!2\dot x_A, \; \dot y_B\!+\!2\dot y_A}{B+2A} \,.
$$ 
Denoting $u=\spin(A,B)$ and $u'=\spin(B',A')$,   
the square of the spinor from $A$ to $B$, understood as  a complex number, is
\begin{equation}
\label{eq:uABbefore}
u^2 \ = \ 
           \begin{vmatrix}
            A & B\\
            \dot x_A & \dot x_B
            \end{vmatrix}
          \  + \ 
            \begin{vmatrix}
            A & B\\
            \dot y_A & \dot y_B
            \end{vmatrix}
            \; i
\end{equation}
while the square of the spinor $\spin(A', B')$ under inversion through $\partial A$ is  
\begin{equation}
\label{eq:uABafter}
u'^2 =  \begin{vmatrix}
            -A & B+2A\\
            -\dot x_A & \dot x_B +2\dot x_A
            \end{vmatrix}
            +
            \begin{vmatrix}
            -A & B+2A\\
            -\dot y_A & \dot y_B+2\dot y_A
            \end{vmatrix}
            \; i\,.
\end{equation}
The determinants in \eqref{eq:uABafter} are negatives of the determinants 
of \eqref{eq:uABbefore}.
Indeed, add the first column twice to the second, and factor out the sign.
Thus $u'^2=-u^2$,
or $u'=\pm iu$.
If 
$u = [m,n]$ then under the inversion we get $u' = [m, -n ]$ (up to the sign, of course).
Thus, if $u$ is integral, so is $u'$. 
\end{proof}

In conclusion, the spin under inversion becomes the conjugated vector, $\mathbf u^+$:
$$
\tspin(A',B')^2 = u^2  \ = \  -\tspin(A,B)^2\,.
$$

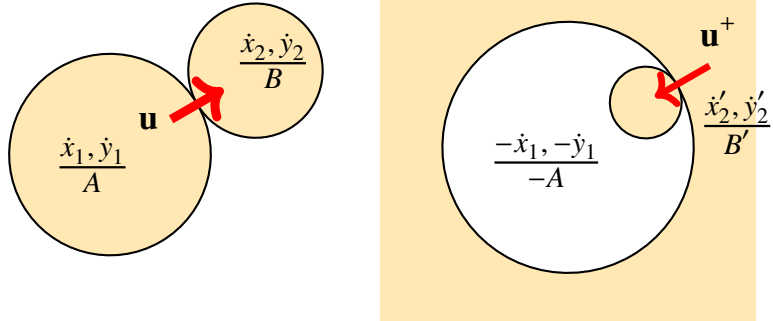
\begin{figure}[h]
\centering
\begin{tikzpicture}[scale=1.34, rotate=30]
\draw [fill=gold!30, thick] (0, 0) circle (1);
\draw [fill=gold!30, thick] (5/3,0) circle (2/3);
\node [scale=1.4] at (-.2,0) {$\frac{\dot x_1,\, \dot y_1}{A}$};
\node [scale=1.4] at (5/3+.2,  0) {$\frac{\dot x_2,\, \dot y_2}{B}$};
\draw [red,->, line width=1.2mm] (1-.3, 0)--(1+.3, 0);
\node  [scale=1.2]  at (.5, .1)  {$\mathbf u$};
\draw [fill=white,color=white] (0,-1.8) circle (.1);
\end{tikzpicture}
\; \;\;
\begin{tikzpicture}[scale=1.67, rotate=30]
\draw [color=white, fill=gold!30, rotate=-30] (-1.5,-1.4) rectangle (1.5,1.4); 
\draw [fill=white, thick] (0, 0) circle (1);
\draw [fill=gold!30, thick] (5/7,0) circle (2/7);
\node [scale=1.4] at (-.2,0) {$\frac{-\dot x_1, \, -\dot y_1}{-A}$};
\node [scale=1.4] at (5/3-.4,  -.5) {$\frac{\dot x'_2, \,\dot y'_2}{B'}$};
\draw [red,->, line width=.9mm] (1+.3, 0)--(1-.2, 0);
\node  [scale=1.2]  at (1.5, 0.2)  {$\mathbf u^+$};
\end{tikzpicture}

\caption{Spinor under inversion}
\label{fig:spinInverted}
\end{figure}

\subsection{Descartes move of tricycles}

Now we inspect the other move of \eqref{eq:T},  
which we shall call {\bf Descartes move}.
It consists of adding  one of the two disks that complete a tricycle to a Descartes configuration 
and simultaneously removing one of its original three disks.
For curvatures,
\begin{equation}
\label{eq:abcsqrt}
            (A,B,C) \ \to \  (A, \, B, \; A+B+C \pm 2\sqrt{AB+BC+CA})
\end{equation}
The move can be done in 6 different ways, hence every tricycle 
determines a 6-valent graph as its orbit via such moves.
The vertices of this graph are all tricycles that appear in a given Apollonian disk packing.
It is connected --- 
any two tricycles can be connected by a chain of Descartes moves.
A {\bf Descartes walk} is a path in this graph.

\begin{definition}
A {\bf descending Descartes move} is a Descartes move in which the new disk 
is the greater among the two completing disks,
and the removed disk is the one of the greatest curvature.   
\end{definition}

\noindent
{\bf Remark:}
If one follows the convention of ordering the curvatures $A\leq B \leq C$, 
the descending Descartes move corresponds to \eqref{eq:abcsqrt}.
It may be expressed algebraically 
$$
            (A,B,C) \ \to \  
          \left(m, \, \Sigma\! -\!M\!-\!m, \, 
            \Sigma -\!2\sqrt{AB\!+\!BC\!+\!CA}\right)
$$             
where $M=\max(A,B,C)$, $m=\min(A,B,C)$, and $\Sigma = A+B+C$.

~

Here is an example of two Descartes  descending moves 
$$
(2,3,23) \ \to \ (2,3,6)   \ \to \ (-1,2,3)
$$

\begin{proposition}
\sf 
The descending Descartes move applied to a bounded tricycle
(tricycle with non-negative curvatures)
decreases strictly the weight, with a single exception of tricycle $(0,0,1)$
\end{proposition}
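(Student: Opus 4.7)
The plan is a direct algebraic calculation followed by identifying the equality case. Writing the tricycle in the sorted form $0 \le A \le B \le C$, the descending Descartes move removes the largest curvature $C$ and inserts the smaller Descartes conjugate
\[
D_{-} \;=\; A+B+C - 2\sqrt{AB+BC+CA},
\]
so the weight change is $\Delta w = D_{-}-C = (A+B) - 2\sqrt{AB+BC+CA}$. First I would note that, since the radicand is non-negative, the inequality $\Delta w < 0$ is equivalent (after squaring and simplification) to
\[
(A-B)^{2} \;<\; 4C(A+B).
\]

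Next I would exploit the ordering. From $A \le B \le C$ together with non-negativity one gets $A+B \le 2C$, hence the chain
\[
(A-B)^{2} \;\le\; (A+B)^{2} \;\le\; 2C(A+B) \;\le\; 4C(A+B),
\]
in which the last step is strict as soon as $C(A+B) > 0$. Consequently $\Delta w < 0$ holds whenever both $C > 0$ and $A+B > 0$.

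Finally I would pin down the exceptional case. The ordering $A \le B \le C$ forces any equality in the chain back to the regime $A+B = 0$ (the alternative $C = 0$ collapses the tricycle to the degenerate $(0,0,0)$). Thus the only way to avoid strict decrease is $A = B = 0$; among primitive integer tricycles this singles out $(0,0,1)$, for which direct substitution gives $D_{-} = 1 = C$ and $\Delta w = 0$, confirming that this is the sole exception. The argument is essentially one elementary inequality; no serious obstacle arises, the only delicate point being to verify that the equality regime $A = B = 0$ of the inequality $A+B \le 2C$ matches precisely the distinguished base tricycle of Figure~\ref{fig:bottom}.
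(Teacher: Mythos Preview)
Your proof is correct and follows essentially the same approach as the paper's: both reduce the claim to the squared inequality (which the paper writes as $0 \le 2AB+4BC+4CA-A^2-B^2$ and you equivalently as $(A-B)^2 < 4C(A+B)$) and then exploit the ordering $A\le B\le C$ to finish; the paper substitutes $C\geq A,B$ into the expression to reach the sum of squares $3A^2+2AB+3B^2\ge 0$, while you chain $(A-B)^2\le (A+B)^2\le 2C(A+B)\le 4C(A+B)$, but these are cosmetic variants of the same idea, and both identify the equality case $A=B=0$ in the same way.
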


\begin{proof}
Assume the order of curvatures to be $0\leq A\leq B \leq C$.
We need to find out if the curvature $C'$ of the new disk replacing the smallest disk $C$ 
is indeed greater than $C$ (has smaller curvature).
Here is a chain of algebraic implications:
\begin{equation}
\label{eq:proof}
\begin{array}{rcl}
      &\quad&C ' \ \leq \ C \\[5pt]
\Liff && A+B+C-2\sqrt{AB+BC+CA} \ \leq \ C  \\[5pt]
\Liff && A+B\  \leq \ 2\sqrt{AB+BC+CA}        \\[5pt]
\Liff && 0  \  \leq \ 2AB+4BC+4CA-A^2 -B^2        \\[5pt]
\Lthenif && 0 \ \leq \ 3A^2 + 2AB + 3B^2   
\end{array}
\end{equation}
where in the last expression $C$ is replaced by smaller values, $A$ and $B$.
The last inequality is self-evident, as the right-hand side is a sum of squares:
$(A+B)^2 +2A^2 +2B^2$.        
The only situation when the equality $C=C'$ holds 
is when $A=B=0$.
The primitivity of the triple enforces $C=1$ 
resulting in the tricycle $(0,0,1)$.
\end{proof}

\begin{proposition}
\sf
The Descartes move preserves the integrality of the disk symbols and of the tangency spinors.
\end{proposition}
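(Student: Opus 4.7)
The key tool I would use is the Minkowski-space version of the reflection that swaps the two Descartes conjugates of a tricycle. Writing $V = \mathrm{span}(\mathbf A, \mathbf B, \mathbf C)$ and using that the Gram matrix of three mutually tangent disks is
\begin{equation*}
\begin{pmatrix} -1 & 1 & 1 \\ 1 & -1 & 1 \\ 1 & 1 & -1 \end{pmatrix},
\end{equation*}
a short computation shows that the orthogonal projection onto $V$ of any Descartes completion $\mathbf D$ of the tricycle equals $\mathbf A + \mathbf B + \mathbf C$. Consequently the two Descartes conjugates satisfy the vector-valued Vieta relation
\begin{equation*}
\mathbf D_1 + \mathbf D_2 \;=\; 2(\mathbf A + \mathbf B + \mathbf C),
\end{equation*}
the Minkowski-space shadow of $D_1 + D_2 = 2(A+B+C)$ from \eqref{eq:Descartes}. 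So the involution $\mathbf D \mapsto \mathbf D' = 2(\mathbf A+\mathbf B+\mathbf C) - \mathbf D$ is an integer affine formula in the coordinates $(\dot x, \dot y, A, A^c)^T$; integrality of $\mathbf A, \mathbf B, \mathbf C$ together with one Descartes conjugate forces integrality of the other.

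For the Descartes move itself I would use the same inductive scaffolding as in the proof of Theorem \ref{thm:main}. At the base tricycle $\bar{\mathbf t}_0 = (0,0,1)$ of Figure \ref{fig:bottom}, a direct hand check gives both conjugates as $(0,\pm 2,1,3)^T$, integer. Each subsequent hyperintegral tricycle in the chain comes paired, by construction, with the disk that survived from the previous step (integer by induction), so the vector Vieta above promotes that disk's integrality to integrality of the newly introduced Descartes conjugate. This settles the symbol part.

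For the spinors, once $\mathbf A$ and $\mathbf D$ carry integer symbols the formula
\begin{equation*}
\spin(A,D)^2 \;=\; \begin{vmatrix} A & D \\ \dot x_A & \dot x_D \end{vmatrix} \;+\; \begin{vmatrix} A & D \\ \dot y_A & \dot y_D \end{vmatrix}\, i
\end{equation*}
shows that $\spin(A,D)^2$ is already a Gaussian integer. To upgrade this to $\spin(A,D)$ itself being a Gaussian integer I would invoke the propagation principle cited at the end of Section 3: integrality of any two adjacent spinors in a single tricycle propagates to every spinor in the Apollonian completion. Since $(A,B,C)$ carries integer spinors by hypothesis and $D$ lies in the same packing, its spinors inherit integrality.

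The hard part will be the parity ambiguity in the vector Vieta: $\mathbf D_1 + \mathbf D_2$ being integer leaves open a priori the possibility that both conjugates are half-integer vectors with matched parities. The cleanest resolution I see is precisely the inductive anchoring at $\bar{\mathbf t}_0$ combined with the reflection formula, rather than trying to prove integrality of a single conjugate from $(A,B,C)$ in isolation.
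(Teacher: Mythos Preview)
Your argument is correct and runs on the same engine as the paper's proof: the vector-valued Vieta relation $\mathbf D_1 + \mathbf D_2 = 2(\mathbf A + \mathbf B + \mathbf C)$ for symbol integrality, and the adjacent-spinor propagation principle for spinor integrality. The paper states the Vieta identity in exactly the form you derive and then, for the spinors, cites the same propagation fact from \cite{jk-lattices} that you invoke.

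Where you diverge is in how you pin down a single integral conjugate. The paper disposes of this in one line by appealing to the blanket fact that ``a proper integral tricycle determines a super-integral Apollonian packing,'' treating it as known and hence making the preservation under Descartes moves trivial. You instead supply an explicit inductive mechanism: carry along a fourth disk (the one dropped at the previous Descartes step, or its image under self-inversion) as a known-integral completion of the current tricycle, so that Vieta immediately yields the other completion. Your anchor at $\bar{\mathbf t}_0$ with the hand-checked conjugates $(0,\pm 2,1,3)^T$ launches this cleanly. This buys you a self-contained argument tailored to the ascending chain of Theorem~\ref{thm:main}, at the cost of not proving the proposition as a free-standing statement about arbitrary super-integral tricycles; the paper's version does the latter but leans on an external result. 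Your flagging of the half-integer ambiguity is apt --- the paper's proof does not address it directly, absorbing it into the cited packing-level statement.

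One phrasing to tighten: ``the disk that survived from the previous step'' should be stated as ``the disk removed in the previous Descartes step, which is automatically a completion of the new tricycle since the four disks involved form a Descartes configuration.'' With that made precise, the induction is watertight.
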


\begin{proof}
The integrality of symbols is  
due to the general rule that 
 the conjugated disks $D$ and $D'$ (two solutions to Descartes formula),
satisfy
$$
\omega_4+ \omega'_4 \ = \  2\omega_1 + 2\omega_2 + 2\omega_3
$$
for $\omega$ denoting collectively any of the coefficients, 
$\dot x$, or $\dot y$, or curvature, or co-curvature, of the $i$-th disk.
Since a proper integral tricycle determines a super-integral Apollonian packing, 
and since the Descartes move travels along tricycles in such a packing,
super-integrality is preserved trivially.  Since the integrality of two adjacent spinors 
in any Descartes configuration determines integrality of all its spinors \cite{jk-lattices},
the claim hold trivially.
\end{proof}  

~

\noindent
{\bf Remark:} 
The Descartes descending move is the critical notion in setting a dynamical system
that leads to the definition of the Apollonian depth function in \cite{jk-depth,jk-universal}.

\subsection{Descending process}

The process may be alternating between 
unbounded triples (the greatest circle bounds a disk of negative curvature)
and bounded (all three are of positive curvatures).
The weight in the descending process is strictly decreasing until reaching the value $w=1$.

~

\noindent
{\bf Example:}
Here is an example of a proper tricycle $(11,14,86)$ from Figure \ref{fig:A2} undergoing a series of
descending moves:
{\small
\begin{equation}
\label{eq:left-right}
\begin{array}{cccccc} 
& (11, 14, 86) \!\!\! \!\!\\
&& \searrow\\
&&& (11, 14, 15) \!\!\! \\
&&&& \searrow\\
&&&&&\!\!\!\! (-6, 11, 14)  \\
&&&& \swarrow\\
&&&(-1, 2,  6)   \\
&&\swarrow\\
&(\,0,\,1,\, 4\,)\!\!\!   \\
&&\searrow\\
&&&(\,0,\,1,\,1\,)   \\
&&&&\searrow\\
&&&&& \!\!\!\!\!\!(\,0,\,0,\,1\,)
\end{array}
\end{equation}
}
The above chain is to be read from the top down.  
Arrows to the right denote Descartes moves, 
to the left --- self-inversions.

~

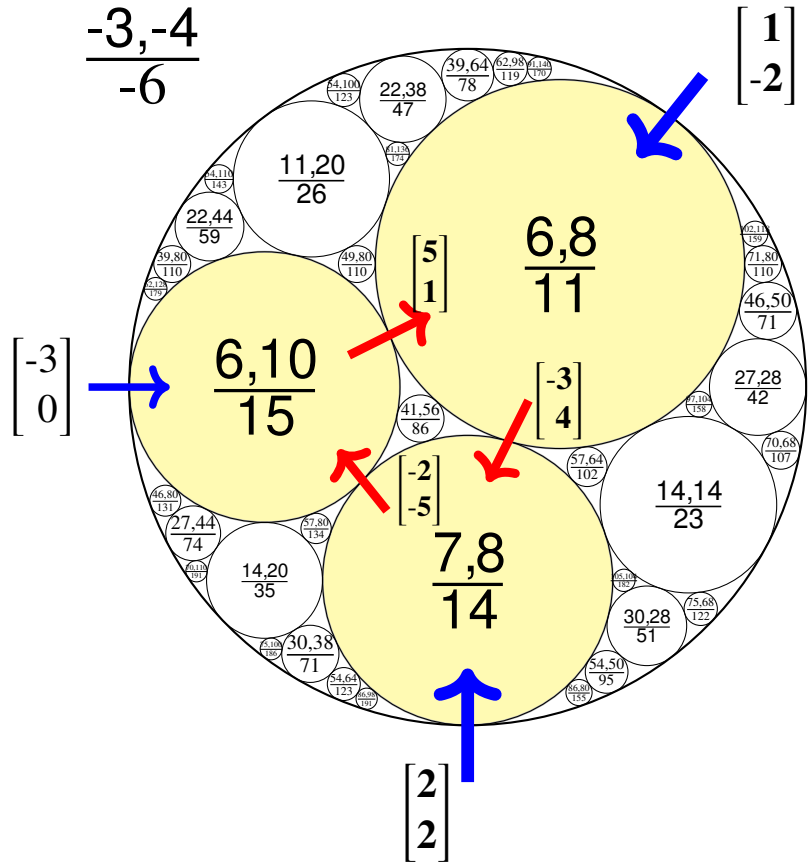
\begin{figure}[H]
\centering
\begin{tikzpicture}[scale=27]  
\draw [thick] (3/6,4/6) circle (1/6);
\draw [fill=yellow, opacity=.3, thick] (6/11,8/11) circle (1/11);
\draw [fill=yellow, opacity=.3, thick] (7/14,8/14) circle (1/14);
\draw [fill=yellow, opacity=.3, thick] (6/15,10/15) circle (1/15);
\foreach \a/\b/\c/\d/\s in {
 6/ 8/ 11/ 94/2,
 7/ 8/ 14/ 8/2,
 6/ 10/ 15/ 9/2,
14/ 14/ 23/ 17/1,
11/ 20/ 26/ 20/1,
14/ 20/ 35/ 17/.7,
27/ 28/ 42/ 36/.7,
22/ 38/ 47/ 41/.7,
30/ 28/ 51/ 33/.7,
22/ 44/ 59/ 41/.7
}
\draw (\a/\c,\b/\c) circle (1/\c)
        node [scale= \s]  at (\a/\c,\b/\c)  {$\frac{\hbox{\sf \a,\!\!\b\!\! }}{\!\!\hbox{\sf \c}}$}
;
\foreach \a/\b/\c/\d in {
30/ 38/ 71/ 33,
46/ 50/ 71/ 65,
27/ 44/ 74/ 36,
39/ 64/ 78/ 72,
41/ 56/ 86/ 56,
54/ 50/ 95/ 57,
57/ 64/ 102/ 72,
70/ 68/ 107/ 89,
49/ 80/ 110/ 80,
39/ 80/ 110/ 72,
71/ 80/ 110/ 104,
62/ 98/ 119/ 113,
75/ 68/ 122/ 84,
54/ 64/ 123/ 57,
54/ 100/ 123/ 105,
46/ 80/ 131/ 65,
57/ 80/ 134/ 72,
54/ 110/ 143/ 105,
86/ 80/ 155/ 89,
97/ 104/ 158/ 128,
102/ 118/ 159/ 153,
91/ 140/ 170/ 164,
81/ 136/ 174/ 144,
62/ 128/ 179/ 113,
105/ 104/ 182/ 120,
75/ 100/ 186/ 84,
86/ 98/ 191/ 89,
70/ 110/ 191/ 89
}
\draw (\a/\c,\b/\c) circle (1/\c)  
node [scale= 70/\c]  at (\a/\c,\b/\c)  {$\frac{\a,\b}{\c}$}
;

\node [scale= 2]  at (.34, .83)  {$\frac{\hbox{\sf -3,-4}}{\hbox{-6}}$};

\draw [blue,->, line width=1.7mm] (3/5+.012+.004, 4/5+.015+.005)--(3/5-.012-.004, 4/5-.015-.005);
\draw [blue,->, line width=1.7mm] (4/8, 4/8-.028)--(4/8, 4/8+.028);
\draw [blue,->, line width=1mm] (3/9-.02, 6/9)--(3/9+.02, 6/9);

\draw node  [scale=1.4]  at (3/5+.047, 4/5+.03) {\color{black} $\begin{bmatrix}\;\mathbf 1\\ \hbox{-}\mathbf 2\end{bmatrix}$};
\draw node  [scale=1.4]  at (4/8-.02, 4/8-.043) {\color{black} $\begin{bmatrix}\mathbf 2 \\ \mathbf 2\end{bmatrix}$};
\draw node  [scale=1.4]  at (3/9-.043, 6/9) {\color{black} $\begin{bmatrix}\hbox{-}3 \\ \;0\end{bmatrix}$};

\draw [red,->, line width=1.2mm] (12/26 -.02, 18/26-.01)--(12/26+.02, 18/26+.01);
\draw [red,->, line width=1.2mm] (13/25 +.01, 16/25+.02)--(13/25-.01, 16/25-.02);
\draw [red,->, line width=1.2mm] (13/29 +.012, 18/29-.015)--(13/29-.012, 18/29+.015);
\draw node  [scale=1.1]  at (12/26+.02,18/26+.03) {\color{black} $\begin{bmatrix}\mathbf 5 \\ \mathbf 1\end{bmatrix}$};
\draw node  [scale=1.1]  at (13/25 +.025, 16/25+.02) {\color{black} $\begin{bmatrix}\hbox{-}\mathbf 3 \\ \; \mathbf 4\end{bmatrix}$};
\draw node  [scale=1.0]  at (13/29 +.028, 18/29-.005) {\color{black} $\begin{bmatrix}\hbox{-}\mathbf 2 \\ \hbox{-}\mathbf 5\end{bmatrix}$};

\end{tikzpicture}

\caption{All data reconstructed for the triple $(11,14,15)$ from Figure \ref{fig:A2}}
\label{fig:all}
\end{figure}

The existential aim of the proof aside, 
the process may be used to actual construction of the ``hyper-integral'' 
version of any integral Apollonian disk packing.
The result of the reconstruction for the example shown 
in~\eqref{eq:left-right} is shown in Figure~\ref{fig:all}.  
For practical purposes, one must apply a labeling system to code 
to perform appropriate ``inverses''.

\section{The principal triples under descending transformations}

Every integral Apollonian packing has a triple of maximal size disks,
and the corresponding triple of the minimal curvatures.  
We shall call them {\bf principal tricycles} and {\bf principal triples}, respectively. 
Since such a triple determines the Apollonian packing, it may be viewed as its label.

Now, using the concept  of the descending transformation, 
we may construct a graph of descending paths for each of the principal triples.
Figure~\ref{fig:paths} shows the result
that includes triples 
with the negative curvature up to (-12). 
(A longer list of principal triples may be extracted from the list 
of Descartes quadruples in \cite{jk-Diophantine}
or produced with the algorithm discussed there.)
In the figure, the principal tricycles are in rectangles, the other in ovals.
From any starting tricycle, by descending moves one arrives in configuration $(0,0,1)$
where the paths meet.
Note that many paths go through the other  principal triples.
The branches will be called ``threads.''

~

\begin{figure}[h]
\centering
\includegraphics[scale=.62]{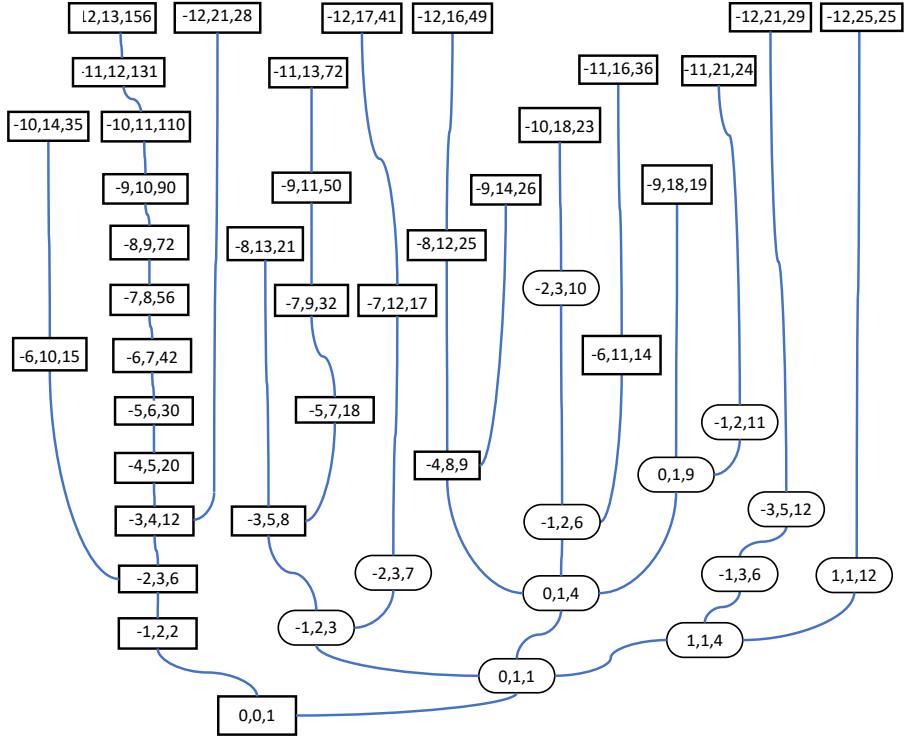}
\caption{Directed graph of primitive defining tricycles}
\label{fig:paths}
\end{figure}

An elementary inspection reveals that most of  
the paths form polynomial sequences.
Table  \ref{table} shows the polynomials for the paths in Figure \ref{fig:paths}.

\begin{table}
$$
\begin{array}{llccc}
\small \sf Label & \small\sf Example &\small\sf Formula && \small\sf Fourth \ disk \\[7pt]
A& (-12,13,156):  &   \Big(\,-n,\; n\!+\!1,\; n(n\!+\!1)\,\Big)                   
                           &&n^2\!+\!n\!+\!1 \\
B& (-11,13, 72):    &   \Big(\,-\!(2n\! -\! 1),\; 2n\! +\! 1,\; 2n^2\,\Big)                
                           &&     2n^2+2 \pm 2   \\
C& (-12, 17, 41):   &   \Big(\,-\!(5n\! -\! 3),\; 5n\! +\! 2, \;5n^2\! -\!n\! -\!1\,\Big)   
                            && 5n^2n\! +\! 4\pm2\\
D&  (-12, 16, 49):  & \Big(\, -4n,\; 4n\! +\! 4,\; (2n\! +\! 1)^2 \,\Big)   
                            && (2n\! +\! 1)^2\! +\! 4\pm 4\\
E&  (-11, 16, 36):   & \Big(\,-\!(5n\! -\! 4),\; 5n\! +\! 1,\; (5n\! -\! 3)n\,\Big)        
                            && 5n^2-n +5 \pm 4\\
F&  {\sf Fibonacci}                      & \Big(\,-F_{2n},\; F_{2n+1},\; F_{2n+2}\,\Big)           
                            &&  2F_{2n+1} \pm 2 \\
G& (-10,14, 35):    &   \Big(\,-\!(4n\! -\! 2),\; 4n\! +\! 2,\; 4n^2\! -\! 1\,\Big)                
                            && 4n^2+3       \\
\end{array}
$$
\vspace{-.22in}
\caption{The formulas for selected threads in the graph in Figure \ref{fig:paths}.}
\label{table}
\end{table}

There is however one surprise. 
One of the paths is not polynomial but instead consists of triples from the Fibonacci sequence. 
Note the negative the greatest polynomials are of degree1, while the third is quadratic.
We use the following convention for labeling the Fibonacci sequence:
$$
 F_0=0, \quad F_1=1, \quad F_{n+1}=F_n+F_{n-1}
 $$ 

\begin{proposition}
\label{thm:F}
\sf 
Every triple of the consecutive Fibonacci terms, starting with an even-labeled entry (made negative), 
$$
(\, -F_{2n},\; F_{2n+1}, \; F_{2n+2} \,) \,,
\qquad
n>1
$$
defines a principal triple of an Apollonian packing.
Moreover, the descending map (self-inversion) carries it to another triple of this kind:
\begin{equation}
\label{eq:F}
(\, -F_{2n},\; F_{2n+1}, \; F_{2n+2} \,)
\quad \mapsto\quad
(\, -F_{2n-2},\; F_{2n-1}, \; F_{2n} \,)\,.
\end{equation}
\end{proposition}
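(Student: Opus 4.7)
The plan is to break the proposition into three sub-claims and handle each by direct calculation with Fibonacci identities: (i) the triple is a proper tricycle, (ii) it is the principal triple of its Apollonian packing, and (iii) descending self-inversion maps the $n$-th such triple to the $(n{-}1)$-th. The main tools throughout are the Fibonacci recurrence $F_{m+1}=F_m+F_{m-1}$ and Cassini's identity $F_{m-1}F_{m+1}-F_m^2=(-1)^m$.

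For (i), I would compute the Descartes discriminant $AB+BC+CA$ with $(A,B,C)=(-F_{2n},F_{2n+1},F_{2n+2})$. Using $F_{2n+2}-F_{2n}=F_{2n+1}$, the terms collapse to $F_{2n+1}^{\,2}-F_{2n}F_{2n+2}$, which equals $+1$ by Cassini at index $2n+1$. Formula \eqref{eq:Descartes2} then yields integer completions $D_\pm=2F_{2n+1}\pm 2$, so the tricycle is proper with integral Apollonian completion. For (iii), I would apply the descending self-inversion formula \eqref{eq:2A}: the image is $(F_{2n},\,F_{2n+1}-2F_{2n},\,F_{2n+2}-2F_{2n})$, and the identities $F_{2n+1}-2F_{2n}=F_{2n-1}-F_{2n}=-F_{2n-2}$ and $F_{2n+2}-2F_{2n}=F_{2n+1}-F_{2n}=F_{2n-1}$ convert this to $(F_{2n},-F_{2n-2},F_{2n-1})$; reordering as $A'\leq B'\leq C'$ gives $(-F_{2n-2},F_{2n-1},F_{2n})$, which is \eqref{eq:F}.

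For (ii), I would show that the completed Descartes quadruple $(-F_{2n},F_{2n+1},F_{2n+2},2F_{2n+1}-2)$ is a root quadruple in the sense of Graham--Lagarias--Mallows--Wilks--Yan, i.e.\ replacing any one entry by its Descartes conjugate produces a weakly larger value. Three inequalities $d\leq a+b+c$, $c\leq a+b+d$, $b\leq a+c+d$ suffice; each reduces via the Fibonacci recurrence to a triviality such as $F_{2n-1}\geq 1$. Since the three smallest entries of a root quadruple constitute the principal tricycle, and $F_{2n+2}\leq 2F_{2n+1}-2$ whenever $F_{2n-1}\geq 2$ (i.e.\ for all $n\geq 2$), the principal triple of the packing is exactly $(-F_{2n},F_{2n+1},F_{2n+2})$.

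The main obstacle is step (ii), because the root-quadruple framework is not explicitly developed in the paper. A self-contained substitute would enumerate the six Descartes moves issuing from $(-F_{2n},F_{2n+1},F_{2n+2})$ and verify directly that each produces a tricycle of weight $\geq 2F_{2n+1}$; combined with the strict weight-monotonicity of the descending Descartes move (which fails to apply here only because the tricycle is unbounded) and the connectedness of the Descartes-move graph on the packing, an induction along Descartes paths forces every tricycle in the packing to have weight $\geq 2F_{2n+1}$, so the Fibonacci triple minimizes weight and is principal.
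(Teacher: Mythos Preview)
Your treatment of parts (i) and (iii) --- properness via Cassini's identity giving $AB+BC+CA=1$ and hence $D_\pm=2F_{2n+1}\pm2$, and the explicit self-inversion computation --- coincides with the paper's approach (the paper writes only ``it takes only the basic identities'' and ``the claim of lowering the weight is equally simple'' without spelling out the details you supply).

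Where you go beyond the paper is part (ii): the paper's proof actually \emph{omits} any verification that the Fibonacci triple is the principal (weight-minimizing) tricycle of its packing, establishing only that the triple is proper and that self-inversion acts as in \eqref{eq:F}. Your root-quadruple argument --- checking the inequalities $d\le a+b+c$, $c\le a+b+d$, $b\le a+c+d$ for the completed quadruple $(-F_{2n},F_{2n+1},F_{2n+2},2F_{2n+1}-2)$, then observing $F_{2n+2}\le 2F_{2n+1}-2$ for $n\ge 2$ so that the three smallest entries are exactly the Fibonacci triple --- is a correct and clean way to close this gap, and it makes your proof strictly more complete than the paper's. Your fallback self-contained argument (verifying that all six Descartes moves from the Fibonacci triple do not decrease weight and then propagating by connectedness) is also viable, though it tacitly uses that the Descartes-move graph on a packing has a unique weight-minimizing tricycle; the root-quadruple route is the cleaner of the two.
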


\begin{proof}
First, let us see that the triple is proper, that is, it completes to an integral Descartes configuration.
Using \eqref{eq:Descartes2}, the fourth disk curvature is
$$
D= -F_{2n}+F_{2n+1}+F_{2n} \pm2\sqrt{-F_{2n}F_{2n+1}+F_{2n+1}F_{2n+2}-F_{2n+2}F_{2n}}
$$
It takes only the basic identities to arrive at
$$
D = 2F_{2n+1}\pm 2\,,
$$
which is evidently integral.
The claim of lowering the weight \eqref{eq:F}  is equally simple.
\end{proof}

Hence, one may define the following chain of Fibonacci tricycles:
\begin{equation}
\label{eq:reversed}
(\, 0,\, 1, \, 1 \,)
\  \mapsto\ 
(\, -1,\, 2, \, 3 \,)
\  \mapsto\ 
(\, -3,\, 5, \, 8 \,)
\  \mapsto\ 
(\, -8,\, 13, \, 21 \,)
\  \mapsto\ 
\ldots
\end{equation}
%
This thread is {\bf exceptional} in two ways: (1)  unlike the other threads, it is not polynomial;
(2) the proportions of the tricycles 
do not collapse to $(-1,1,0)$ with the increasing $n$, while the others do.
To clarify the second property, consider the thread denoted D.
The ratios of the second and the third disk to the first become in the limit:
$$
\lim_{n\,\to\,\infty} \frac{4n}{4n+4} = 1\,,
\qquad
\lim_{n\,\to\,\infty} \frac{4n}{(2n+1)^2} = 0\,,
$$
which means the greater of the two disks inside is approaching the external disk, 
while the smaller disappears to a point.
The same concerns the other polynomial threads shown in  Figure \ref{fig:paths}.

In contrast,  the triples of the ``Fibonacci thread'' \eqref{eq:reversed}, 
the proportions approach 
the  shape known in Ancient Greece as an ``arbelos''
with a particular split of the diameter, namely the golden cut.
It is shown in Figure~\ref{fig:arbelos}, left, and the Pappus chains on the right side.

\begin{figure}
\centering

\begin{tikzpicture}[scale=50, rotate=116]  
\draw (-2/21,12/21) circle (1/21) ;     
\draw [fill=gold!50] (-3/34, 20/34) circle (1/34);
     \node [scale=1.8] at (-3/34, 20/34) {$\sf \varphi$};
\draw [fill=gold!50] (-6/55, 30/ 55) circle (1/55);
     \node [scale=1.8] at (-6/55, 30/ 55) {$\sf \varphi^2$};
\end{tikzpicture}
\quad
\begin{tikzpicture}[scale=57, rotate=117]  
\draw (-2/21,12/21) circle (1/21) ;     
\draw [fill=gold!50] (-3/34, 20/34) circle (1/34);
     \node [scale=1] at (-3/34, 20/34) {$\sf \varphi$};
\draw [fill=gold!50] (-6/55, 30/ 55) circle (1/55);
     \node [scale=1] at (-6/55, 30/ 55) {$\sf \varphi^2$};
\draw [fill=gold!10] (-5/66, 36/66) circle (1/66);
   \node [scale=1] at (-5/66, 36/66) {$\sf 2 \varphi$};
\draw [fill=gold!10] (-9/70, 40/70) circle (1/70);
    \node [scale=1] at (-9/70, 40/70) {$\sf 2 \varphi$};
         
\draw [fill=gold!10] (-6/103, 58/103) circle (1/103);
   \node [scale=1] at (-6/103, 58/103) [scale=.9] {$\sf 5\varphi\!-\!3$};
\draw [fill=gold!10] (-9/166, 96/166) circle (1/166);
   \node [scale=1] at (-9/166, 96/166) [scale=.5]  {$\sf 10\varphi\!-\!8$};
\draw [fill=gold!10] (-14/255, 150/255) circle (1/255);
\draw [fill=gold!10] (-21/370, 220/370) circle (1/370);
\draw [fill=gold!10] (-30/511, 306/511) circle (1/511);
\draw [fill=gold!10] (-41/678, 408/678) circle (1/678);

\draw [fill=gold!10] (-15/166, 88/166) circle (1/166);
      \node [scale=.9] at (-15/166, 88/166) [scale=.8] {$\sf 5\varphi$};
\draw [fill=gold!10] (-33/334, 176/334) circle (1/334);
     \node [scale=1] at (-33/334, 176/334) [scale=.4] {$\sf 10\varphi$};

\draw [fill=gold!10] (-59/570, 300/570) circle (1/570);
%
\draw [fill=gold!10] (-93/874, 460/874) circle (1/874);

\draw [fill=gold!10] (-30/331, 184/331) circle (1/331);
\draw [fill=gold!10] (-73/774, 432/774) circle (1/774);

\draw [fill=gold!10] (-38/339, 192/339) circle (1/339);
\draw [fill=gold!10] (-85/786, 444/786) circle (1/786);
\end{tikzpicture}
\caption{Golden arbelos and its Pappus chains}
\label{fig:arbelos}
\end{figure}
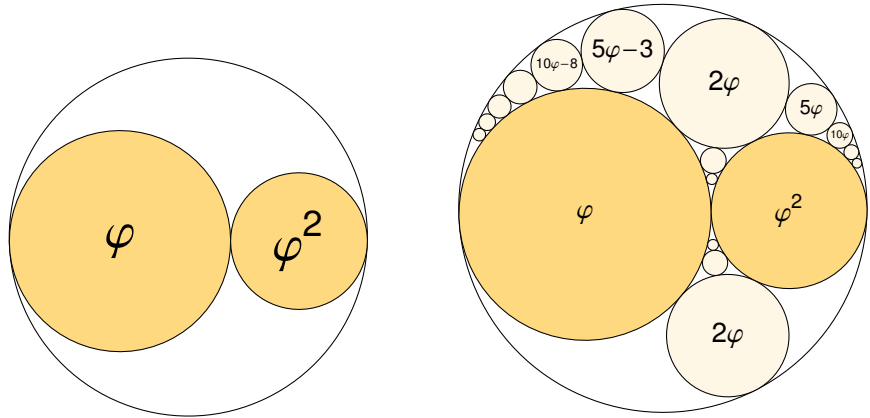

$$
\frac{1}{\varphi} + \frac {1}{\varphi^2} = 1
$$

\noindent
{\bf Remark:}
Every disk in the ``golden arbelos'' has curvature  of the form $m\varphi + n$ for some integers 
$m,n\in\mathbb Z$
as assured that the first four are such.
In particular, the curvatures of the 
 Pappus chain made by disks
passing through the left disk $\varphi$ and going to the  right and  to the left are as follows:
 $$
 \begin{array}{rll}
 (n^2+1)\,\varphi : &\qquad& 
 \varphi^2 = \varphi, \;  2\varphi, \ 5\varphi , \ 10\varphi, \ 17\varphi,\ \ldots  \\[7pt]
 (n^2+1)\varphi -(n^2-1) = \varphi ^2 + \frac{n^2}{\varphi}: &&
 \varphi^2\! =\! \varphi + 1, \;  2\varphi, \ 5\varphi - 3, \ 10\varphi - 8, \ldots 
\end{array}
$$
We suggest extending the term ``Pappus chain'' to any chain of disks 
that is squeezed between two tangent disks.
The Pappus chain
squeezed between the disks $\varphi$ and $\varphi^2$
is:
 $$
2 n^2\varphi +n^2 -1: \qquad 
 -1, \;  2\varphi, \ 8\varphi +3, \ 18\varphi+ 8, \ \ldots 
$$
And clearly, for $n=0$ we get (-1), the greatest enclosing disk.

~

Performing the reversed descending of the Fibonacci thread 
will produce a system of tricycles shown in Figure \ref{fig:chain}.
The points of tangency of the circles are
$$
P_n \  =  \ \frac{F_{n-1}^2, \; 2F_{n}^2} {F_{2n+1}}
$$
They all lie on a single circle
$\frac{-1,\, 1}{1}$
and  have a point limit:
$$
P \ = \ \lim_{n\to\infty} P_n \ = \ \   \left( \frac{5+2\sqrt{5}}{5}  , \;  \frac{5-\sqrt{5}}{5}           \right)
$$
The position of the point is interesting:
Points $(-1,0)$ $(P_x)$, P, $(-1,P_y)$
form a golden rectangle, the one of sides in the golden proportion.
The other rectangle that includes points $(-1,-1), (0,1), P$,  is in the proportion 1:2.
This ``Fibonacci circle'' is interesting on its own and will be presented in a subsequent note.

~

\begin{figure}
\centering

\resizebox{1.05\textwidth}{!}{

\begin{tikzpicture}[scale=7.7]

\clip (-1.05, -.11) rectangle (1.01, 1.05);
\definecolor{gold}{rgb}{1,.70,.0}   

\draw (0,0) circle (1);
\draw [fill=blue!10] (-1/2,0) circle (1/2);
\draw [fill=blue!10] (0,2/3) circle (1/3);

\draw [fill=blue!20] (0/5,4/5) circle (1/5);
\draw[fill=blue!20]  (-1/8,4/8) circle (1/8);

\draw [fill=blue!30] (-2/13,6/13) circle (1/13);
\draw [fill=blue!30]  (-2/21,12/21) circle (1/21);

\draw [fill=blue!40] (-3/34,  20/34) circle (1/34);
\draw [fill=blue!40]  (-6/55,  30/55) circle (1/55);

\draw [fill=blue!77] (-10/89,  48/89) circle (1/90);
\draw [fill=blue!77] (-15/144,  80/144) circle (1/150);


\node [scale= 3]  at (0.6,0)  {$\sf 1$};
\node [scale= 3]  at (-0.5,0)  {$\sf 2$};
\node [scale= 2.5]  at (0.38,  2/3-.12)  {$\sf 3$};
\node [scale= 3]  at (0.045,  4/5)  {$\sf 5$};
\node [scale= 1.8]  at (-1/8+.16,4/8-.07)  {$\sf 8$};
\node [scale= .8 ]  at (-1/8+.1,4/8+.03)  {$\sf 21$};
\node [scale= 2]  at (-2/13, 6/13)  {$\sf 13$};
\node [scale= 1] at (-3/34,  20/34) {$\sf 34$};

\draw [red,thick] (-1,1) circle (1);
        
\end{tikzpicture}
}

\caption{The chain of the Fibonacci thread, realized in the plane.}
\label{fig:chain}
\end{figure}
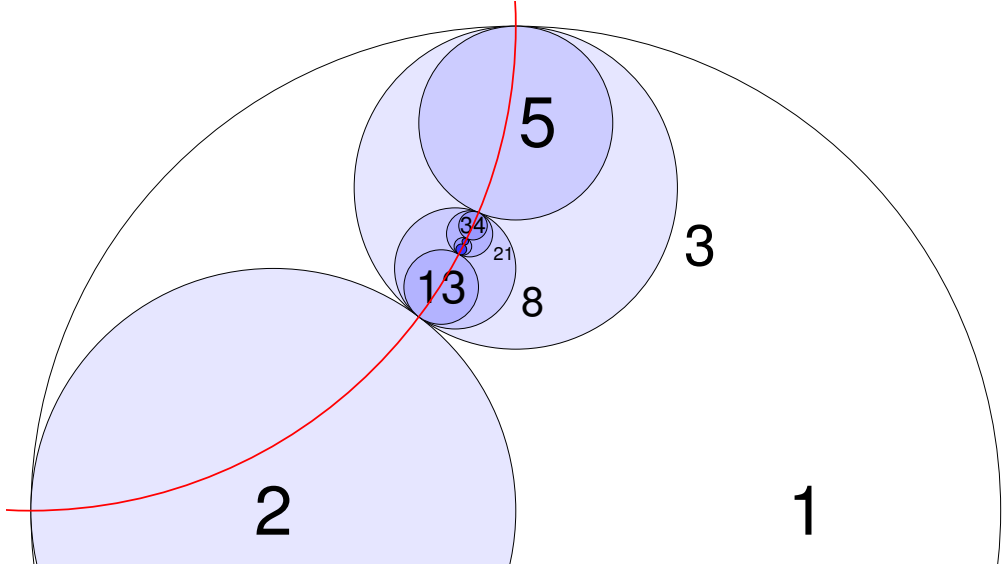

Whether there exist other exceptional threads is an interesting question. 

\newpage
\section{The group-theoretic aspect}

In this section we rewrite the story with the group-theoretic context clarified.

First, one needs to distinguish between the tricycles 
as the actual geometric triads of disks located in $\mathbb R^2$, 
and the triples of numbers representing their curvatures that can be treated algebraically.
We start with the latter.  Equip $\mathbb R^3$ with inner product defined by matrix 
\begin{equation}
\label{eq:smallG}
G =\frac{1}{2}\, \begin{bmatrix}  0&1&1\\
                               1 & 0 &1\\
                               1 & 1& 0
      \end{bmatrix}
\end{equation}      
In explicit notation, the quadratic form is:
$$
Q(\mathbf v)\equiv |\mathbf v|^2 = xy+yz+zx  \qquad \hbox{for} \ \  \mathbf v=[x,y,z]^T
$$ 
The space $(\mathbb R^3, G)$ is isomorphic to 
a two-dimensional Minkowski space $\mathbb R^{1,2}$
of signature $(+,-,-)$,
as can be demonstrated with  $4Q= (x+2y+z)^2 - (x-z)^2-(2y)^2$.
Define the subset of triples of integers: 
$$
\mathcal T = 
\Big\{\, \mathbf v\in \mathbb Z^3 \;\mathbf:\; 
                        |\mathbf v|\in \mathbb N,\, w(\mathbf v)>0,\, \gcd(v) = 1\;\Big\}
                         \quad \subset \mathbb Z^3
$$
where $\mathbb N$ includes zero.
The three restrictions are repeated here explicitly
$$
\begin{array}{cll}
(i) && w(\mathbf v) = a+b+c \, > \, 0\\ 
(ii) && |\mathbf v| \ \equiv \ \sqrt{ab+bc+ca\;}   \quad   \in \ \mathbb N\!=\!\{0,1,2...\}\\
(iii) && \gcd(\mathbf v) \equiv \gcd(a,b,c) = 1
\end{array}
$$ 
These conditions imply that such triples can be realized as curvatures of disks in tricycles.  
In particular, that at most one number can be negative 
and its absolute value smaller than any of the two remaining,
and that tricycle is proper, i.e., it may be completed to an integral Descartes quadruple  
due to ({\it ii}).
One may think of them as tricycles with unassigned position or orientation 
in the Euclidean plane $\mathbb R^2$.

There are two groups acting in this space:
$$
\begin{array}{cl}
\APt &= \hbox{the group generated by Descartes moves}  \\
\SIt &= \hbox{the group generated by self-inversions} 
\end{array}
$$
The group generated by the union of generators of both groups will be called 
the {\bf Descartes tricycle group} and this fact will be denoted with the symbol ``$\dot\cup$'':
$$
           \Dest   \  = \  \APt \ \dot\cup \ \SIt
$$
It acts transitively on the set $\mathcal T$,
which is the main point of Proposition~\ref{thm:main}.
The notation and terminology in a wider context is addressed in the Appendix. 
A short description of both groups  follows.

~\\
1. The {\bf kaleidoscope group} $\SIt$ generated by {\bf self-inversions}
represented by linear transformations given by the following matrices: 
\begin{equation}
\label{eq:M}
M_1 = \begin{bmatrix}  -1&0&0\\
                                 \; 2 & 1 &0\\
                                 \; 2 & 0&1
      \end{bmatrix}\,,
      \qquad
M_2 = \begin{bmatrix}  1&\; 2&0\\
                                 0 & -1 &0\\
                                 0 & \; 2&1
      \end{bmatrix}\,,
\qquad
M_3 = \begin{bmatrix}  1&0&\; 2\\
                               0 & 1 &\; 2\\
                               0 & 0&-1
      \end{bmatrix}\,.
\end{equation}
The generators satisfy $M_1^2=M_2^2=M_3^2=I$  (identity matrix)
and preserve the inner product~\eqref{eq:smallG}:
$$
M_i^TGM_i=G \qquad\forall i=1,2,3\,.
$$
thus, they generate a discrete subgroup of the Lorentz group  
$$
 \SIt \ = \  \gen\{M_1,M_2,M_3\} \  \subset \  {\rm O}(G,\mathbb Z) \ \cong \ {\rm O}_{(1,2)}(\mathbb Z)
$$
2.  The {\bf nonlinear Apollonian group}  generated by {\bf tricycle Descartes moves} $\APt$  
represented by 6 generators:
$$
 \APt \ = \  \gen\{\,N_{1\pm},\,N_{2\pm},\, N_{3\pm}\,\} 
$$
where 
\begin{equation}
\label{eq:NNN}
\begin{array}{rl}
N_{1\pm}(a,b,c) &= (w(\mathbf v) \pm 2|\mathbf v|,\,b,\,c),\\
N_{2\pm}(a,b,c) &= (a,\,w(\mathbf v) \pm 2|\mathbf v|,\,c), \\
N_{3\pm}(a,b,c) &= (a,b,w(\mathbf v) \pm2|\mathbf v|)
\end{array}
\end{equation}
The group preserves the set $\mathcal T$, and transfers tricycles in a transitive way 
to other tricycles in the same Apollonian packing, hence the adjective ``Apollonian'' in the name.
The star is to indicate that it is nonlinear 
and is not equivalent to the well-known Apollonian group of \cite{Lagarias+}.

~\\
{\bf Graph structure.}
The Cayley graph of the groups are presented in Figure~\ref{fig:valent}.
The groups impose a similar structure on the set of tricycles $\mathcal T$,
which may be viewed as a graph with tricycles as vertices,
and the edges defined by the group action:
\\[-7pt]

Type 1:  There exists an  edge $(a,b)$ if \ $b=M_ia$ \  for some $M_i$.

Type 2: There exists  an edge $(a,b)$ if \  $b=N_ia$ \ for some $N_i$ .
\\[-7pt]
\\
The type 1 subgraph is three-valent and forms an infinite tree shown in Figure~\ref{fig:valent}, left.
Such a graph is known as $z=3$ Bethe lattice.
Type 2 subgraph is 6-valent and is not cycle-free.
(But it may be viewed as a cycle-free infinite  ensemble of tetrahedrons joined at their vertices.)
Every regular point of $\mathcal T$ is thus a 9-valent vertex,
with six edges going along the fiber, and three across the fiber bundle.
The edges may be oriented by the weight function:
$$
a\to b \qquad \hbox{if} \qquad w(b)<w(a)\,.
$$

\begin{figure}[H]
\centering
\begin{tikzpicture}[scale=.3] 
\def\H{.866}
\def\V{.5}
\draw  [fill=black] (0,0) circle (.5);   
\draw [] (0,0) -- (0, 4);  
\draw [] (0,0) -- (4*.866,-4*.5);  
\draw [] (0,0) -- (-4*.866,-4*.5);  
\draw  [fill=black] (0,4) circle (.4);   
\draw  [fill=black] (4*\H,-4*\V) circle (.4);  
\draw  [fill=black] (-4*\H,-4*\V) circle (.4); 

\draw [] (4*\H,-4*\V) -- (7*\H,-\V);  
\draw [] (4*\H,-4*\V) -- (4*\H,-4*\V-3);  
\draw  [fill=black] (7*\H,-\V) circle (.3);   
\draw  [fill=black] (4*\H,-4*\V-3) circle (.3);   
%
\draw [] (-4*\H,-4*\V) -- (-7*\H,-\V);  
\draw [] (-4*\H,-4*\V) -- (-4*\H,-4*\V-3);  
\draw  [fill=black] (-7*\H,-\V) circle (.3);   
\draw  [fill=black] (-4*\H,-4*\V-3) circle (.3);   
%
\draw [] (0,4) -- (3*\H,4+3*\V);  
\draw [] (0,4) -- (-3*\H,4+3*\V);  
\draw  [fill=black] (3*\H,4+3*\V) circle (.3);   
\draw  [fill=black] (-3*\H,4+3*\V) circle (.3);   
%

\draw [] (7*\H,-\V) -- (7*\H,-\V+2);  
\draw [] (7*\H,-\V) -- (9*\H,-3*\V);  
\draw  [fill=black] (7*\H,-\V+2) circle (.3);   
\draw  [fill=black] (9*\H,-3*\V) circle (.3);   

\draw [] (-7*\H,-\V) -- (-7*\H,-\V+2);  
\draw [] (-7*\H,-\V) -- (-9*\H,-3*\V);  
\draw  [fill=black] (-7*\H,-\V+2) circle (.3);   
\draw  [fill=black] (-9*\H,-3*\V) circle (.3);   

\draw [] (4*\H,-4*\V-3) -- (6*\H,-6*\V-3);  
\draw [] (4*\H,-4*\V-3) -- (2*\H,-6*\V-3);  
\draw  [fill=black] (6*\H,-6*\V-3) circle (.3);   
\draw  [fill=black] (2*\H,-6*\V-3) circle (.3);   
\draw [] (-4*\H,-4*\V-3) -- (-6*\H,-6*\V-3);  
\draw [] (-4*\H,-4*\V-3) -- (-2*\H,-6*\V-3);  
\draw  [fill=black] (-6*\H,-6*\V-3) circle (.3);   
\draw  [fill=black] (-2*\H,-6*\V-3) circle (.3);   

\draw [] (3*\H,4+3*\V) -- (5*\H, 4+\V);  
\draw [] (3*\H,4+3*\V) -- (3*\H,6+3*\V);  
\draw  [fill=black] (5*\H, 4+\V) circle (.3);   
\draw  [fill=black] (3*\H,6+3*\V) circle (.3);   
%
\draw [] (-3*\H,4+3*\V) -- (-5*\H, 4+\V);  
\draw [] (-3*\H,4+3*\V) -- (-3*\H,6+3*\V);  
\draw  [fill=black] (-5*\H, 4+\V) circle (.3);   
\draw  [fill=black] (-3*\H,6+3*\V) circle (.3);   
\end{tikzpicture}
\qquad\quad
\begin{tikzpicture}[scale=.2, rotate=0]
\def\s{.5}
\foreach \a  in {
4}
{\draw [-, thick]  (-\a, -\a) -- (\a, -\a) -- (\a,\a) -- (-\a,\a) -- cycle;
\draw [-, thick]  (-\a,-\a) -- (\a, \a);
\draw [-] (\a, -\a) -- (-\a, \a);
\draw [fill=black] (\a,\a) circle (\s) (-\a,\a) circle (\s)
          (\a,-\a) circle (\s)  (-\a,-\a) circle (\s);
}; 
\foreach \m/\n in {  1/1,  -1/1,  1/-1,  -1/-1 }
\foreach \a/\b/\c\s in { 4/4/4/.4,  8/8/2/.3, 2/8/2/.3, 8/2/2/.3,
10/10/1/.2, 7/10/1/.2, 10/7/1/.2, 4/10/1/.2, 10/4/1/.2, 1/10/1/.2, 10/1/1/.2, 1/7/1/.2, 7/1/1/.2 }
{
\draw [-, thick]  (\m*\a, \n*\b) rectangle (  \m*\a+\m*\c,  \n*\b+\n*\c );
    \draw [-, thick]  (\m*\a+\m*\c,\n*\b) -- (\m*\a, \n*\b+\n*\c);
    \draw [-, thick]  (\m*\a,\n*\b) -- (\m*\a+\m*\c, \n*\b+\n*\c);
\draw [fill=black] 
             (\a*\m,\b*\n) circle (\s)  (\a*\m+\c*\m,\b*\n) circle (\s)  
             (\a*\m,\b*\n+\c*\n) circle (\s)  (\a*\m+\c*\m,\b*\n+\c*\n) circle (\s);
};
\end{tikzpicture}

\caption{Discrete structure of the groups.  Left: generated by the three self-inversions;
Right: generated by the six Descartes moves. 
It also reflects the local structures of $\mathcal T$ as a graph. Left:along fibers, right: across fibers.}
\label{fig:valent}
\end{figure}
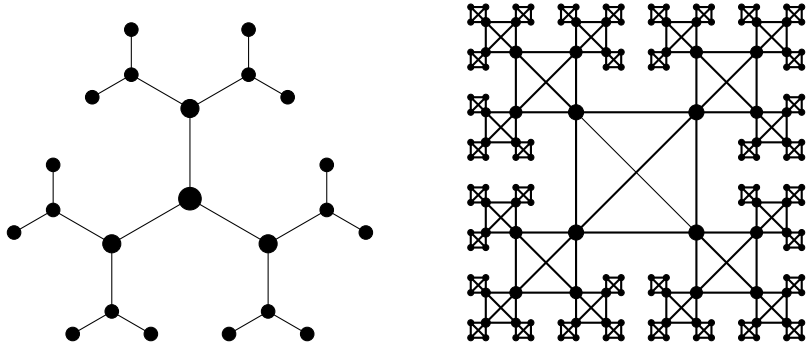

\noindent
{\bf The fiber geometry of the space of tricycles.}
The action of the group of Descartes moves turns  $\mathcal T$ into a fiber bundle 
with the natural projection
\begin{equation}
\label{eq:piA}
\pi_{\mathcal A}: \ \mathcal T \ \to \ \mathcal T/\SIt \ \equiv \ \mathcal A
\end{equation}
where the factor space 
$$
\mathcal A = \mathcal T \big/ \APt\ \cong \ \{\hbox{\rm all integral Apollonian packings}\} 
$$
can be identified with the set of the integral irreducible Apollonian disk packings,
and the fibers are the group orbits.
Each fiber contains all triples in the particular Apollonian packing.
Thus, for a given tricycle $t$, the set
$$
\pi^{-1}\circ\pi \,(\mathbf t) \ \cong \  \APt\cdot \mathbf t
$$
consists of all triples in the Apollonian packing that contains $t$
(with no particular location in $\mathbb R^2$, of course). 
 
Every fiber has a weight-minimal element coinciding with the principal tricycle of the 
corresponding Apollonian packing.
Note that starting with an arbitrary vertex in the fiber and moving down the weight 
is what defines the ``depth function'' discussed in \cite{jk-depth,jk-universal}.

\begin{figure}[H]
\centering
\hspace*{-.2in}
\begin{tikzpicture}[scale=.7]  

\draw [thick] (0,1.2) rectangle (7.7,5.3);
\draw [very thick]  (0,0) -- (7.7,0); 
\draw node  [scale=1.5]  at (8.2,4.5)  {$\mathcal T$};
\draw node  [scale=1.5]  at (8.2,0)  {$\mathcal A$};

\draw [black,->, line width=.7mm] (4, 1) -- (4, .15);
\draw node  [scale=1.1]  at (4.5,.55)  {$\pi_{\mathcal A}$};

\draw [blue,very thick]  (2,1.2) -- (2,5.3); 
\draw node  [scale=1.1]  at (2.56,4.2)  {$\mathcal T_{\alpha}$};

\draw [blue,very thick,dotted]  (2,1) -- (2,.2); 

\draw [fill=black] (2,3) circle (.1); 
\draw node  [scale=1.1]  at (2.5,3)  {$\mathbf t$};
\draw [fill=black] (2,0) circle (.1); 
\draw node  [scale=1.1]  at (2,-.5)  {$\alpha$};

\draw [green,<->, line width=1.2mm] (5.1, 4.2) -- (6.9, 4.2);
\draw node  [scale=1]  at (6.1,4.8)  {$\SIt$};

\draw [green,<->, line width=1.2mm] (6.2, 1.7) -- (6.2, 3.5);
\draw node  [scale=1,rotate=90]  at (5.5,2.6)  {$\APt$};

\draw [fill=black] (0,1.2) circle (.1); 
\draw node  [scale=1.1]  at (0,.7)  {$\mathbf t_0$};

\draw [fill=black] (0,0) circle (.1); 
\draw node  [scale=1.1]  at (0,-.5)  {$\alpha_0$};

\end{tikzpicture}
\qquad
\begin{tikzpicture}[scale=.7]  
\draw [fill=gray!47] (0,1.2) rectangle (7,2.8);
\draw [thick] (0,1.2) rectangle (7,5.3);
\draw [very thick]  (0,0) -- (7,0); 
\draw node  [scale=1.5]  at (3,4.7)  {$\mathcal T$};
\draw [black,->, line width=.7mm] (4, .9) -- (4, .1);
\draw node  [scale=1.1]  at (4.5,.5)  {$\pi_{\mathcal A}$};

\draw [red, thick] (6,4) -- (6,2.8) -- (5,3.5) -- (5,2.8) -- (4,1.8) 
-- (3,1.6) -- (1.5,4) -- (1.5,2.8) -- (0,3.8) -- (0,1.2); 

\draw [-{>[scale=1.1]}, red, thick] (6,4) -- (6,3.5);
\draw [-{>[scale=1.1]}, red, thick] (0,3.8) -- (0,2);

\draw [fill=black] (6,4) circle (.1); 
\draw node  [scale=1.1]  at (6.5,4)  {$\mathbf t$};

\draw [fill=black] (6,2.8) circle (.1); 
\draw [fill=black] (5,3.5) circle (.1); 
\draw [fill=black] (5,2.8) circle (.1); 
\draw [fill=black] (4,1.8) circle (.1); 
\draw [fill=black] (3,1.6) circle (.1); 
\draw [fill=black] (1.5,4) circle (.1); 
\draw [fill=black] (1.5,2.8) circle (.1); 
\draw [fill=black] (0,3.8) circle (.1); 
\draw [fill=black] (0,1.2) circle (.1); 

\draw [fill=black] (6,0) circle (.1); 
\draw node  [scale=1.1]  at (6,-.5)  {$\alpha$};

\draw [fill=black] (0,1.2) circle (.1); 
\draw node  [scale=1.1]  at (0,.7)  {$\mathbf t_0$};

\draw [fill=black] (0,0) circle (.1); 
\draw node  [scale=1.1]  at (0,-.5)  {$\alpha_0$};

\draw [red, thick] (0,3.8) -- (0,1.2); 

\end{tikzpicture}
\caption{Left: The space of triples as a fiber bundle. Apollonian belt is denoted $\alpha_0$.
Right: A weight-descending path.}
\label{fig:fiberbundle}
\end{figure}
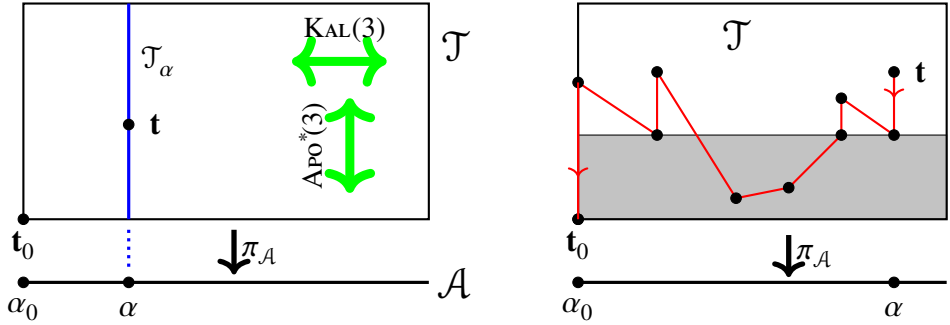

Figure \ref{fig:fiberbundle}, left, shows the fiber bundle structure \eqref{eq:piA}.
The groups are acting ``vertically'' and ``horizontally''. That is, 
elements of $\SIt$ permute the elements of the fibers
but do not permute fibers 
$$
\pi_A\circ \SIt = \pi_{\mathcal A}
$$
while the  action of $\SIt$ does not preserve the fiber structure,
i.e., in general, non-identity elements $g\in\SIt(2)$ skater element of a fiber
to different fibers, i.e. in general
$$
\pi_{\mathcal A}(t) \ \not= \  \pi_{\mathcal A}(g(t))
$$
with exception when $\mathbf t\in \pi^{-1}_{\mathcal A}(\mathbf t_0)$ contains a 0.
Group $\APt$ acts transitively on on each fiber, and 
group $\Dest = \APt\,\dot\cup\, \SIt$ acts transitively on $\mathcal T$.

~

\noindent
{\bf Geometric meaning of the inner product.}  
The inner product \eqref{eq:smallG}  has a clear geometric meaning.
As is well known, for a given three disks of curvatures
$a,b,c$, the curvature of the circle $K$ going through the points of tangency
is $\sqrt{ab+bc+ca\,}$, which coincides with $|\mathbf v|$, where $\mathbf v = [a,b,c]^T$.
The preservation of  the inner product by the self-inversions is beautifully illustrated by the fact that 
the circle $K$ is preserved by these transformations.
Figure \ref{fig:chainBig}, left, shows tricycle of disks with curvatures 11, 14, 15,  
and the inversions of the last two through ``11'', assuming curvatures 36 and 37.
The limit of the iteration of self-inversions of the original disks is the circle $K$;
see the right side of the figure.

\begin{figure}[h]
\centering
\begin{tikzpicture}[scale=14,rotate=150]
\%clip (-1.05, -.11) rectangle (1.01, 1.05);

\definecolor{gold}{rgb}{1,.70,.0}   

\def\disk#1#2#3{%
\draw [black,fill=blue!20, opacity=.5] (#1/#3,#2/#3) circle (1/#3);%
\draw [black] (#1/#3,#2/#3) circle (1/#3);%
\draw node at (#1/#3,#2/#3) [scale=1.2] {$\sf #3$};
}

\def\diskt#1#2#3{%
\draw [black,fill=blue!30, opacity=.6,thin] (#1/#3,#2/#3) circle (1/#3);%
\draw [black,thin] (#1/#3,#2/#3) circle (1/#3);%
\draw node at (#1/#3,#2/#3) {$\sf #3$};
}

\disk{6}{8}{11};
\disk{7}{8}{14};
\disk{6}{10}{15};

\diskt{19}{24}{36};
\diskt{18}{26}{37};
\draw [red,thick] (11/23,15/23) circle (1/23);
        
\end{tikzpicture}
\qquad
%
\begin{tikzpicture}[scale=14,rotate=150]
\%clip (-1.05, -.11) rectangle (1.01, 1.05);

\definecolor{gold}{rgb}{1,.70,.0}   

\def\disk#1#2#3{%
\draw [black,fill=blue!20, opacity=.5] (#1/#3,#2/#3) circle (1/#3);%
\draw [black] (#1/#3,#2/#3) circle (1/#3);%
}

\def\diskt#1#2#3{%
\draw [black,fill=blue!30, opacity=.6,thin] (#1/#3,#2/#3) circle (1/#3);%
\draw [black,thin] (#1/#3,#2/#3) circle (1/#3);%
}

\disk{6}{8}{11};
\disk{7}{8}{14};
\disk{6}{10}{15};
\disk{19}{24}{36};
\disk{18}{26}{37};
\disk{20}{24}{39};
\disk{20}{26}{43};
\disk{18}{28}{41};
\disk{19}{28}{44};
\disk{	32	}{	40	}{	61	}
\disk{	56	}{	74	}{	109	}
\disk{	30	}{	44	}{	63	}
\disk{	55	}{	76	}{	110	}
\disk{	33	}{	40	}{	64	}
\disk{	60	}{	74	}{	121	}
\disk{	60	}{	76	}{	125	}
\disk{	33	}{	44	}{	72	}
\disk{	32	}{	46	}{	73	}
\disk{	56	}{	84	}{	129	}
\disk{	30	}{	46	}{	67	}
\disk{	55	}{	84	}{	126	}
\diskt{	45	}{	56	}{	86	}
\diskt{	120	}{	154	}{	231	}
\diskt{	93	}{	124	}{	182	}
\diskt{	144	}{	188	}{	279	}

\diskt{	42	}{	62	}{	89	}
\diskt{	115	}{	164	}{	236	}
\diskt{	92	}{	126	}{	183	}
\diskt{	140	}{	196	}{	283	}
\diskt{	46	}{	56	}{	89	}
\diskt{	126	}{	154	}{	249	}
\diskt{	100	}{	124	}{	203	}
\diskt{	153	}{	188	}{	306	}
\diskt{	100	}{	126	}{	207	}
\diskt{	153	}{	196	}{	322	}
\diskt{	46	}{	62	}{	101	}
\diskt{	126	}{	164	}{	269	}
3e
\diskt{	45	}{	64	}{	102	}
\diskt{	120	}{	176	}{	275	}
\diskt{	93	}{	140	}{	214	}
\diskt{	144	}{	214	}{	331	}
\diskt{	42	}{	64	}{	93	}
\diskt{	115	}{	176	}{	260	}
\diskt{	92	}{	140	}{	211	}
\diskt{	140	}{	214	}{	319	}

\draw [yellow,thick] (11/23,15/23) circle (1/23);
        \end{tikzpicture}
\caption{The limit set of the action of the tricycle group.}
\label{fig:chainBig}
\end{figure}

\noindent
{\bf Geometrization of the triples.}
Now to produce the hyper-integral Apollonian packing in the Euclidean plane given
an integral packing devoid of any information o locations, we need another fiber bundle.
Let $\mathcal T_{\sf geo}$ represent the set of (integral proper) tricycles located in $\mathbb R^2$.
Among others, it contains the tricycle denoted $\bar{\mathbf t}_0$ visualized in Figure~\ref{fig:bottom}.
There is a group of Euclidean motions ${\rm E}(2)$
(group of isometries, generated by rotations, translations, and reflections) acting on this set. 
The group action turns the set into a fiber bundle over $\mathcal T\cong \mathcal T_G/{\rm E}(2)$,
with projection
$$
\pi_{\mathcal T} : \mathcal T_{\sf geo} \ \to \ \mathcal T
$$
There is a natural lift of the action of the groups 1 and 2 to the bundle 
defined as follows.
Elements of $\mathcal T_{\sf geo}$ may be identified with matrices 
$$
t = \begin{bmatrix}
\dot x_A & \dot y_A & A\\
\dot x_B & \dot y_B & B\\
\dot x_C & \dot y_C & C\\
\end{bmatrix}
$$
In the case of self-inversions, 
the generators are defined by exactly the same 
matrices as in \eqref{eq:M}.
In this context they will be denoted with over-bars:  ${\bar M}_1$, ${\bar M}_2$, and ${\bar M}_3$, respectively.
In the case of the action of the self-inversion group $\SIt(2)$, 
the action is defined as 
$$
{\bar N}_1: \ 
\begin{bmatrix}
\dot x_A & \dot y_A & A\\
\dot x_B & \dot y_B & B\\
\dot x_C & \dot y_C & C\\
\end{bmatrix}
\ 
\to
\ 
\begin{bmatrix}
f(\dot x_A,\dot x_B,\dot x_C) & f(\dot y_A,\dot y_B, \dot y_C) & g(A,B,C)\\
\dot x_B & \dot y_B & B\\
\dot x_C & \dot y_C & C\\
\end{bmatrix}
$$
where
$$
g_{\pm} (\mathbf v) = w(\mathbf v) \pm 2\sqrt{Q(\mathbf v)\,}
\qquad\hbox{and}\qquad
f_\pm(\mathbf v) = w(\mathbf v) \pm2\sqrt{Q(\mathbf v)+1\,}\\
$$
or, explicitly, 
$$
\begin{array}{rl}
f_\pm(x,y,x) &= x+y+z\pm2\sqrt{xy+yz+zx+1\,}\\
g_{\pm} (x,y,x) &= x+y+z\pm 2\sqrt{xy+yz+zx\,}
\end{array}
$$
The other generators are defined analogously.

~

\begin{figure}[H]
\centering
\hspace*{-.2in}
\begin{tikzpicture}[scale=.7]  
\draw [line width=.49mm]  (0,0) -- (8,0); 
\draw [thick, fill=green!10] (2,1) -- (9,1) -- (10.5,2) -- (4,2) -- (2,1);
\draw [] (4,4) rectangle (10.5,7.5);
\draw [thick] (2,3) rectangle (9,7);
\draw [thick] (2,3) -- (4,4);  
\draw [thick] (2,7) -- (4,7.5);  
\draw [thick] (9,3) -- (10.5,4);  
\draw [thick] (9,7) -- (10.5,7.5);  
        \draw node  [scale=1.2]  at (9,-.2)  {$\mathcal A$};
        \draw node  [scale=1.2]  at (10.5,1.2)  {$\mathcal T$};
        \draw node  [scale=1.2]  at (11.5,6)  {$\mathcal T_{\sf geo}$};
\draw [violet,-triangle 45, line width=.5mm] (8.4, 2.9) -- (8.4, 1.4);
      \draw node at (8.9, 2.3) {$\pi_{\mathcal T}$};
\draw [violet,-triangle 60, line width=.5mm] (7.6, .8) -- (6.7, .2);
      \draw node at (8, .4) {$\pi_{\mathcal A}$};
\draw [very thick, blue] (6.5,7.25) -- (6.5,3.5);
\draw [fill=black] (6.5, 6) circle (.1); 
      \draw node [scale=1.1] at (7 , 6) {$\bar {\mathbf t}$}; 
\draw [very thick, dotted] (6.5,2.95) -- (6.5,1.5); 
\draw [fill=black] (6.5,1.5) circle (.1);
       \draw node [scale=1.1] at (7 , 1.4) {$\mathbf t$}; 
\draw [very thick, blue] (7.4,2) -- (5.6,1);
\draw [very thick, dotted] (5.6,1) -- (3.8, 0); 
\draw [fill=black] (3.8, 0) circle (.1); 
        \draw node [scale=1.1] at (3.8, -.5) {$\alpha$}; 
\draw [red, thick]  (2,1) --(3.8,1.6) --   (4.4,1.3) --  (5,1.5) -- (6.5,1.5);  
\draw [<-, red, thick]  (5.3,1.5) -- (6.5,1.5);
\draw [red, thick] (2,3) --(3.8,5.5) --  (4.4,5) -- (5, 6)   -- (6.5,6);
\draw [->, red ,thick]  (2,3) -- (3.8-.6,5.5-2.5/3);
\draw [->, red , thick]  (5, 6) -- (6,6);
        \draw node [scale=.9] at (4.5, 1.7) {$T$}; 
        \draw node [scale=1] at (4.7, 6.2) {$T^{-1}$}; 
\draw [fill=black] (2,3) circle (.1); \draw node at (1.2,3) {$\bar{\mathbf t}_0$};
\draw [fill=black] (2,1) circle (.1);  \draw node at (1.2,1) {$\mathbf t_0$};
\draw [fill=black] (0,0) circle (.1);  \draw node at (0,-.5) {$\alpha_0$};
\end{tikzpicture}
\caption{The space of geometric tricycles as a fiber bundle.  Point $t$ represents the starting triple, 
$t_0$ --- the base triple $(0,0,1)$,
$\bar t_0$ -- the base tricycle in the hyper-integral location,
$T$ denotes the transformation that brings $t$ to $t_0$,
$T^{-1}$ its inverse applied to $\bar t_0$ and bringing it to the hyper-integral location $\bar{\mathbf t}$
}
\label{fig:bigbundle}
\end{figure}
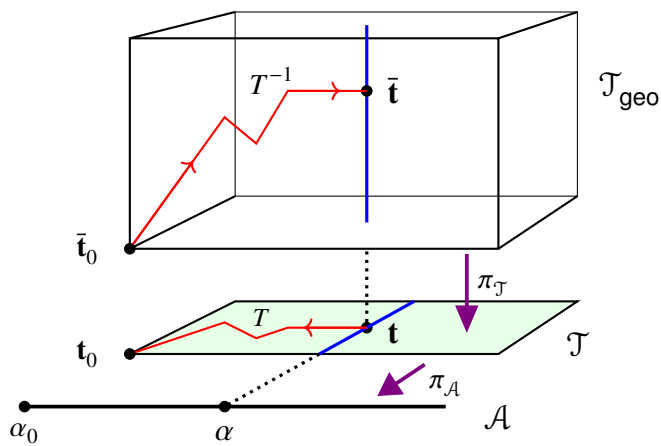

~\\
{\bf Descending and ascending path.}
On the right side of Figure~\ref{fig:fiberbundle}, 
the darker region symbolizes the unbounded number triples,
i.e. containing negative or zero entries.
The polygonal path that starts at arbitrary tricycle $t$ 
(that can be selected from some Apollonian packing $\alpha$)
illustrates the process described in Section \ref{sec:main}.
At every step, if the tricycle is regular (contain no negative curvature disk),
apply Descartes move
(this moves the triple along the fiber $\pi^{-1}(\alpha)$).
Repeat until the first triple with a negative entry is hit.
In such a case apply the self-inversion.
Repeat the steps until the base triple $\mathbf t_0=(0,0,1)$ is reached.   
The process creates an element $T\in \Dest$ such that 
$T(\mathbf t) = \mathbf t_0$.
Apply its inverse to the geometrized version of the base tricycle to get 
$\bar{\mathbf t}= T^{-1} (\bar{\mathbf t}_0)$,
which is a tricycle with assigned hyperintegral location in the plane,
which may be completed to the corresponding Apollonian packing.

\newpage

\section{Proof done with quadruples and Descartes group}

The hyper-integrality proof of the previous sections was based on tricycles, 
the ``primitive atoms'' of Apollonian packings.
But the story may easily be reformulated in terms of Descartes configurations, 
the ``molecules'' of the Apollonian packings,
which are more familiar in the context of the Apollonian packings.
Below we provide such a formulation in a succinct way.
As an interesting results, a symmetry group with linear representation, 
containing the well-known Apollonian group, emerges.

~\\
$\bullet$ 
{\bf Algebra:  integral quadruples.} Let $\mathcal D\subset \mathbb Z^4$ denote the subset of integer quadruplets, 
written usually as  $\mathbf v=(a,b,c,d)$ or alternatively as the column vector $\mathbf v=[a \ b \ c \ d ] ^T$,
defined by 
\begin{equation}
\label{eq:D}
\mathcal D \ = \ \{\mathbf v\in\mathbb Z^4  : \ \gcd(\mathbf v) = 1, \ Q(\mathbf v) = 0, \ w(\mathbf v)>1 \ \}
\end{equation}
where
$$
\begin{array}{cll}
(i) && \gcd(\mathbf v) =\gcd(A,B,C,D) \\
(ii) && Q(v)=
(A\!+\!B\!+\!C\!+\!D)^2\!-\!2(A^2\!+\!B^2\!+\!C^2\!+\!D^2\;   \\
(iii) && w(\mathbf v) =  a+b+c+d \qquad \hbox{denotes the weight}
\end{array}
$$ 
The quadratic form $Q$ of (ii) corresponds to Gram matrix
\begin{equation}
\label{eq:Q4}
\small
\setlength\arraycolsep{3pt}
G = \begin{bmatrix*}[r]  -1&1&1&1\;\\
                                 \;1& -1 &1&1\;\\
                                 \;1  &1&-1&1\; \\
                                 \;1 & 1 & 1 &-1\;
      \end{bmatrix*}\,,
\end{equation}
which defines an inner product in $\mathbb R^4$.
Notation $Q(\mathbf v) = \|\mathbf v\|^2 = \mathbf v^TG\mathbf v$ may also be used. 
The conditions ({\it i})--({\it ii}) are to capture only primitive quadruplets 
that can be realized as curvatures of disks in contact. 
In particular, the second condition is equivalent to the Descartes formula.
The last condition excludes the ``non-geometric'' examples like $(1,-2,-2,-3)$,
which satisfy Descartes formula but are not ``geometric''. 
The lowest weight is assumed by $\mathbf t_0 = (0,0,1,1)$ up to the permutation of the entries,
the ``base quadruple.''
\\
\\
{\bf Remark:}  
The $G$ turns $\mathbb R^4$ into a Minkowski space 
of signature $(+,-,-,-)$, which is dual to that discussed in \cite{jk-Descartes}.
The condition $Q=0$ makes the vectors of $\mathcal D$ isotropic.

~

\noindent
$\bullet$ {\bf Geometry: Descartes configurations.}  By  $\mathcal D_{\sf geo}$, 
we denote geometric instances of the quadruples of $\mathcal D$,
i.e., the set of concrete Descartes configurations in the plane with explicit position.
The elements of $\mathcal D$ may be described by matrices
\begin{equation}
\label{eq:piD}
\small
\mathbf q = \begin{bmatrix}
\dot x_A & \dot y_A & A&A^c\\
\dot x_B & \dot y_B & B&B^c\\
\dot x_C & \dot y_C & C&C^c\\
\dot x_D & \dot y_D & D&D^c
\end{bmatrix}
\qquad
\begin{tikzpicture}[scale=3.7, anchor=base,baseline=-7.7ex, rotate=150]  
\draw [fill=yellow, opacity=.3, thick] (6/11,8/11) circle (1/11);
\draw [fill=yellow, opacity=.3, thick] (7/14,8/14) circle (1/14);
\draw [fill=yellow, opacity=.3, thick] (6/15,10/15) circle (1/15);
\draw [fill=yellow, opacity=.3, thick] (41/86,56/86) circle (1/86);
\end{tikzpicture}
\end{equation}
where $\dot x$, $\dot y$ are the reduced coordinates,
and the third and the last column contain curvatures and co-curvatures, respectively.  
In a sense, it is vector $\mathbf v$ of \eqref{eq:D} ``extended''  to a matrix.
We have a natural fibration
$$
\pi_{\mathcal D} :  \mathcal D_{\sf geo} \ \to \ \mathcal D:
\qquad
\tiny\setlength\arraycolsep{2pt}
\begin{bmatrix}
\dot x_A & \dot y_A & A&A^c\\
\dot x_B & \dot y_B & B&B^c\\
\dot x_C & \dot y_C & C&C^c\\
\dot x_D & \dot y_D & D&D^c
\end{bmatrix}\mapsto
\begin{bmatrix}
 A\\  B\\  C\\  D
\end{bmatrix}
$$
with the group action by the group $\rm E(2)$ of Euclidean motions 
(rotations and translations and reflections).
The fibers \eqref{eq:piD} are the orbits of the group.
Each fiber is the full set of all possible locations of a given quadruple in the Euclidean plane.

~
\\
$\bullet$  {\bf Symmetry groups} in $\mathcal D$ and $\mathcal D_{\sf geo}$. 
The space described above is invariant under the action of a group 
that we shall call {\bf Descartes group}, 
$$
    \Desq \ = \ \APq \ \dot\cup \ \SIq
$$
defined by eight generators
that may be split into two quadruplets each defining a subgroup, as described below.
(In this context, they will be refereed to as ``small Descartes groups''.)  
The groups, interestingly, are now linear (hence the lack of the star in the symbol).
\\[-5pt]

\noindent
(1) The {\bf Apollonian group}  $\APq$ is generated by {\bf Descartes moves},
represented by four matrices:
\begin{equation}
\label{eq:4map}
\small
\setlength\arraycolsep{3pt}
M_1 = \begin{bmatrix*}[r]  -1&2&2&2\;\\
                                    0& 1 &0&0\;\\
                                    0  & 0&1&0\; \\
                                   0 & 0 & 0 & 1\;
      \end{bmatrix*}
\quad
M_2 = \begin{bmatrix*}[r] \;1&0&0&0\;\\
                                 2 & \!-1 &2&2\;\\
                                 0 & 1&1&0\;\\
                                 0& 0 &0 &1\;
      \end{bmatrix*}
\quad
M_3 = \begin{bmatrix*}[r] \;1&0& 0&0\;\\
                               0 & 1 &0&0\;\\
                               2 & 2&\!-1&2\;\\
                               0& 0 & 0 &1\;
      \end{bmatrix*}
\quad
M_4 = \begin{bmatrix*}[r]  \;1&0&0& 0\;\\
                               0 & 1 &0& 0\;\\
                               0 & 0&1& 0\;\\
                               2& 2 & 2 &\!-1\;
      \end{bmatrix*}
\end{equation}
Each represents a replacement of one of the disk by 
its conjugation via the remaining three (see \eqref{eq:Descartes2} in Section 3).
It has natural representation of acting on the quadruplets of numbers in $\mathcal D$
or as geometric transformations of the actual disk arrangements in the Euclidean plane.
(Note that it is not equivalent to the tricycle Apollonian group $\APt$ in the previews section.)
The matrices are known as the means of reconstructing  
all disks in the Apollonian packing from a given Descartes configuration, 
and the name ``Apollonian group'' is already standard \cite{Lagarias+}.
\\[-5pt]

\noindent
(2) 
The {\bf kaleidoscope group} $\SIq$ generated by {\bf self-inversions} 
that are now represented by four matrices
\begin{equation}
\label{eq:4mi}
\small
\setlength\arraycolsep{3pt}
N_1 = \begin{bmatrix*}[r]  -1&0&0&0\;\\
                                  2 & 1 &0&0\;\\
                                  2 & 0&1&0\; \\
                                  2 & 0 & 0 & 1\;
      \end{bmatrix*}
      \quad
N_2 = \begin{bmatrix*}[r] \; 1& 2&0&0\;\\
                                 0 &\! -1 &0&0\;\\
                                 0 &  2&1&0\;\\
                                 0& 2 &0 &1\;
      \end{bmatrix*}
\quad
N_3 = \begin{bmatrix*}[r] \; 1&0& 2&0\;\\
                               0 & 1 & 2&0\;\\
                               0 & 0&\!-1&0\;\\
                               0& 0 & 2 &1\;
      \end{bmatrix*}
\quad
N_4 = \begin{bmatrix*}[r] \; 1&0&0& 2\;\\
                               0 & 1 &0& 2\;\\
                               0 & 0&1&2\;\\
                               0& 0 & 0 &\!-1\;
      \end{bmatrix*}
\end{equation}
The matrices represent  self-inversions defined in Section~\ref{sec:si}, now  extended  
from triples to  Des\-cartes configurations.
They can be be viewed as acting on vector $v\in \mathcal D$,
i.e., on quadruples of numbers (algebra),
or on the actual quadruples of disks in the Euclidean plane.
Interestingly, the matrices of the second group
are transposes of the matrices of the first group
despite the different geometric origin and interpretation.

\begin{corollary}
\sf
The Descartes group preserves integrality of symbols and spinors.
\end{corollary}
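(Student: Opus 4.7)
The plan is to reduce the claim to already-established facts for the individual generators, so that preservation under the whole group follows by closure under composition.

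For integrality of symbols, each of the eight generators $M_i$ and $N_i$ in \eqref{eq:4map} and \eqref{eq:4mi} is an integer $4\times 4$ matrix, and its geometric action on a Descartes configuration in $\mathcal D_{\sf geo}$ is precisely left multiplication on the $4\times 4$ symbol matrix $\mathbf q$ of \eqref{eq:piD}. What I would verify is that this linear action correctly encodes the geometric transformation on each of the four coordinate-columns $\dot x$, $\dot y$, $A$, and $A^c$ simultaneously. For the self-inversion generators $N_i$ this is immediate from formula \eqref{eq:invformula}, which is component-wise identical across all four columns. For the Descartes generators $M_i$ one invokes the identity $\omega_i'=-\omega_i+2\sum_{j\ne i}\omega_j$, valid for each symbol coordinate $\omega\in\{\dot x,\dot y,A,A^c\}$, as recorded in the proof of the Descartes-move proposition in Section~5.2. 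Once this matching is in place, integer input yields integer output by ordinary integer matrix multiplication, and the property is closed under composition.

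For integrality of spinors, I would appeal to the propositions in Sections~5.1 and~5.2, which already establish that self-inversive and Descartes-move generators preserve spinor integrality on the tricycles contained in the configuration. Because each transformed Descartes configuration shares a tricycle of three disks with the original, the integrality of two adjacent spinors passes across, and the propagation principle recalled in Section~3 then upgrades this to integrality of all six tangency spinors of the image configuration. Since the full group $\Desq$ is generated by these eight moves, integrality of spinors is preserved throughout.

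The main obstacle --- or rather the only delicate point --- is the reconciliation between the nonlinear formula \eqref{eq:abcsqrt} for the Descartes move on curvatures and the linear matrices $M_i$. The square root there encodes the choice between two conjugate disks, but \emph{both} conjugates jointly satisfy the linear sum rule $D+D'=2(A+B+C)$ in every symbol coordinate; eliminating the known disk leaves precisely the integer linear action recorded in \eqref{eq:4map}. Once this observation is in hand, the corollary drops out immediately from the integrality of the eight generators together with that of the input $\mathbf q$.
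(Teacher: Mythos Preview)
Your proof is correct and mirrors the paper's intended reasoning; the paper in fact states the corollary without proof, treating it as immediate from the integer-matrix form of the generators together with the tricycle propositions of Sections~5.1 and~5.2 that you cite.

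One phrasing issue worth tightening: the sentence ``each transformed Descartes configuration shares a tricycle of three disks with the original'' is literally true only for the Descartes generators $M_i$, where three disks are fixed. Under a self-inversion $N_i$ all four disks change, so no tricycle is shared. The argument still goes through, because Proposition~5.4 gives $\spin(A',B')=\pm i\,\spin(A,B)$ for inversion through $A$; applying this to $B$, $C$, $D$ in turn yields integral spinors $\spin(A',B')$, $\spin(A',C')$, which are adjacent and suffice for the propagation principle of Section~3. You already point to the right proposition --- just adjust the mechanism in that sentence so it covers both generator types.
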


\noindent
$\bullet$ {\bf Proof of hyper-integrality.} Now we define the transformation that does the trick.

\begin{definition}
\label{def:4descendence}
The {\bf descending} transformation of Descartes quadruples is
the map $T:\mathcal D \to \mathcal D$ defined
\begin{equation}
\label{eq:TT}
T(\mathbf v) = \begin{cases}
                \, M_i\,\mathbf v   & \hbox{if}   \min(\mathbf v) \geq 0 \ \hbox{and} \max(\mathbf v) = v_i\
                \qquad \hbox{\small (descending Descartes move)}\\
                \;N_i\,\mathbf v   & \hbox{if}  \min(\mathbf v) = v_i  < 0   
                \qquad\qquad\qquad\quad  \hbox{\small (descending self-inversion)}
                \end{cases}
\end{equation}
\end{definition}

In words: if the quadruple has a negative term, perform self-inversion with respect to it.
Otherwise, apply the Descartes move with respect to the greatest term.

\begin{proposition}
\label{thm:weight}
\sf
The descending transformation decreases strictly the weight 
$$
  w(T\mathbf \mathbf q) < w(\mathbf q)
$$
exception for $\mathbf q_0=(0,0,1,1)$ and its permutations,
in which case $w(T(\mathbf q_0)) = w(\mathbf q_0)=2$.
\end{proposition}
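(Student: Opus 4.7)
The plan is to dispatch the two cases of $T$ (see \eqref{eq:TT}) by direct computations on $w$, the self-inversive branch being essentially immediate and the Descartes-move branch reducing to one inequality about non-negative Descartes quadruples that is the main technical content.

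For the self-inversive branch, when $v_i<0$, reading off $N_i$ from \eqref{eq:4mi} shows it negates $v_i$ and adds $2v_i$ to each of the other three entries, so summing the rows yields $w(N_i\mathbf v)-w(\mathbf v)=4v_i<0$; the weight drops by $4|v_i|$.

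For the Descartes-move branch, when $\min(\mathbf v)\geq 0$ and $v_i=\max(\mathbf v)$, the matrix $M_i$ from \eqref{eq:4map} replaces $v_i$ by its Descartes conjugate $v_i'=2\sum_{j\ne i}v_j-v_i$ (cf.~\eqref{eq:Descartes2}), and a direct computation gives
\begin{equation*}
w(M_i\mathbf v)-w(\mathbf v)=-2\Bigl(v_i-\textstyle\sum_{j\ne i}v_j\Bigr).
\end{equation*}
Thus strict decrease is equivalent to the claim that the maximum entry of a non-negative primitive Descartes quadruple strictly exceeds the sum of the other three, with equality precisely on permutations of $(0,0,1,1)$. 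Writing the other three entries as $b,c,d$ and setting $\sigma=b+c+d$, $\tau=bc+cd+db$, Descartes' formula \eqref{eq:Descartes2} forces $a\in\{\sigma\pm 2\sqrt{\tau}\}$. If $a=\sigma+2\sqrt{\tau}$, then $a-\sigma=2\sqrt{\tau}\geq 0$ with equality iff $\tau=0$, which under non-negativity forces at least two of $b,c,d$ to vanish, and primitivity then pins $(a,b,c,d)$ to a permutation of $(0,0,1,1)$.

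The main obstacle is ruling out the possibility $a=\sigma-2\sqrt{\tau}$ (the smaller root, in which case $a\leq\sigma$ and the move would fail to lower the weight). In this subcase, the maximality conditions $a\geq b,c,d$ unfold into the three symmetric inequalities $(c-d)^2\geq 4b(c+d)$, $(b-d)^2\geq 4c(b+d)$, $(b-c)^2\geq 4d(b+c)$. If $b,c,d>0$, combining each with the trivial bound $(x-y)^2\leq(x+y)^2$ yields $c+d\geq 4b$, $b+d\geq 4c$, $b+c\geq 4d$, whose sum gives $2\sigma\geq 4\sigma$, a contradiction. The remaining zero-entry subcases collapse Descartes to the degenerate form $a=(\sqrt{b}\pm\sqrt{c})^2$ (after one entry is killed), whose smaller-root branch together with $a\geq\max(b,c)$ forces a second entry to vanish and again lands in the orbit of $(0,0,1,1)$ by primitivity. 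Assembling the two branches of $T$ then gives strict decrease of $w$ off the orbit of $\mathbf q_0$; on that orbit, $\tau=0$ collapses the two Descartes roots, so $M_i$ merely permutes entries and $w(\mathbf q_0)=2$ is preserved.
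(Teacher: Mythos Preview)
Your proof is correct and follows the same two-case split as the paper's own argument. For the Descartes-move branch the paper is terser: it simply asserts that the move sends $d_{+}$ to $d_{-}$ and hence lowers the weight, whereas you supply the verification (left implicit in the paper) that the maximum entry of a non-negative Descartes quadruple must indeed be the larger root $\sigma+2\sqrt{\tau}$ rather than the smaller one.
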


\begin{proof}
(1) 
Let us use the notation $\mathbf q=(a,b,c,d)$. In case of descending Descartes move 
that applies when $d\geq c\geq b  \geq a \geq 0$, the solution for the fourth disk for the given $a,b,c$ is 
one of the two:  $d_\pm=a\!+\!b\!+\!c\pm\sqrt{ab\!+\!bc\!+\!ca}$. 
Clearly $d_{-} < d_{+}$ with exception of the base quadruple $\mathbf q_0 = (0,0,1,1)$,
and the claim follows as $d_{-}$ replaces $d_{+}$ in the move.
(2) 
In the case of descending self-inversion, the argument is analogous to the tricycle version.  
Say $\mathbf q=(-a,b,c,d)$ where all $a,b,c,d\geq 0$.
$$
w(T\mathbf q) \ = \ w(a, b\!-\!2a, c\!-\!2a, d\!-\!2a)
         \ = \ w(\mathbf q)-4a <w(\mathbf q)
$$
(Note that in the case of all four curvatures positive 
the inequality would fail.)
\end{proof}

\begin{theorem}
\label{thm:Tinverse}
\sf 
All irreducible integral Descartes configurations as quadruples of $\mathcal D$
are hyper-integral, i.e., can be located in the Euclidean plane 
so that they have integral symbols and integral spinors. 
\end{theorem}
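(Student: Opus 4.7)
The plan is to mirror the tricycle argument of Theorem \ref{thm:main}, but now exploiting the linear Descartes group action on quadruples. Given any $\mathbf{q}\in \mathcal{D}$, I would iteratively apply the descending transformation $T$ of Definition \ref{def:4descendence}. By Proposition \ref{thm:weight}, each step either strictly decreases the positive integer weight $w(\mathbf{q})$ or we are already at the base quadruple $\mathbf{q}_0=(0,0,1,1)$. Well-ordering of $\mathbb{N}$ forces the orbit to reach $\mathbf{q}_0$ (up to the harmless ambiguity of permuting entries) after finitely many steps, say $n$. Since each step is realized by a single generator $M_i$ or $N_i$ of $\Desq$, the composition $T^n$ is a group element $\Phi\in\Desq$ with $\Phi(\mathbf{q})=\mathbf{q}_0$, i.e., $\mathbf{q}=\Phi^{-1}(\mathbf{q}_0)$.

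Next I would produce an explicit hyper-integral geometric realization $\bar{\mathbf{q}}_0\in \mathcal{D}_{\sf geo}$ of the base quadruple: take the two parallel vertical lines $x=\pm 1$ (disks of curvature $0$) together with the unit disk at the origin (curvature $1$) and a second disk of curvature $1$ tangent to all three, as in Figure \ref{fig:bottom}. A direct computation from \eqref{eq:symbol1} shows that all four rows of the symbol matrix \eqref{eq:piD} have integer entries, and the six tangency spinors computed from \eqref{eq:spincalc} are integer vectors.

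Then I would apply $\Phi^{-1}$ to $\bar{\mathbf{q}}_0$, viewing it as a matrix of symbols and letting the same $M_i,N_i$ matrices act by left multiplication on the curvature/co-curvature block (while acting as an affine transformation on the reduced-coordinate block, via the explicit formulas \eqref{eq:invformula} for self-inversions and the Descartes conjugation formula for the moves). The corollary stated immediately before the theorem guarantees that $\Desq$ preserves integrality of both symbols and tangency spinors; hence $\bar{\mathbf{q}}:=\Phi^{-1}(\bar{\mathbf{q}}_0)\in \mathcal{D}_{\sf geo}$ is an integral realization with the prescribed curvature quadruple $\pi_{\mathcal{D}}(\bar{\mathbf{q}})=\mathbf{q}$. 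The propagation property cited at the end of Section 3 then transports integrality of spinors from this single Descartes configuration to the entire Apollonian completion, giving the hyper-integrality claim.

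The main potential obstacle is the termination of the descending iteration: one must ensure that $T$ never stalls in an infinite cycle above $\mathbf{q}_0$. Proposition \ref{thm:weight} handles the generic case, but at quadruples containing a zero curvature the rule in Definition \ref{def:4descendence} could be ambiguous (both branches apply). I would resolve this by making $T$ deterministic, e.g., always pick the smallest index achieving the min/max; this tie-breaking preserves the strict weight drop everywhere except at $\mathbf{q}_0$ itself, so the chain terminates. A secondary subtlety, which is mild but worth checking, is that the self-inversion matrices $N_i$ of \eqref{eq:4mi} indeed implement the same geometric inversion used for tricycles in Section \ref{sec:si} when extended to the fourth disk; this reduces to verifying \eqref{eq:invAformula} for the extra row, after which the integrality-preservation corollary applies uniformly.
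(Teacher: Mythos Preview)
Your proposal is correct and follows essentially the same route as the paper: descend via Proposition~\ref{thm:weight} to the base quadruple $\mathbf q_0$, realize $\mathbf q_0$ explicitly as a hyper-integral $\bar{\mathbf q}_0$, then run the chain back using that each generator is an involution and preserves integrality. One small clarification: in the quadruple setting the action of $M_i$ and $N_i$ on the $4\times 4$ symbol matrix \eqref{eq:piD} is uniform left multiplication on \emph{all} columns simultaneously (curvatures, co-curvatures, and both reduced-coordinate columns obey the same linear rule, by the extended Descartes theorem), so there is no need to split into a linear block and an affine block as you suggest---this actually simplifies your argument.
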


\begin{proof}
Starting with a quadruple $v$, the chain of descending transformations
$$
T=T_nT_{n-1}\ldots T_2T_1\qquad T_i\in (the \ set \ of \ eight \ generators)
$$
will bring it to a quadruple $\mathbf q_0=(0,0,1,1)$ (coefficients up to permutation).
Since the weight is strictly decreasing, one must arrive at $\mathbf q_0$.
Now, start with the geometric Descartes configuration 
$$
\small
\bar{\mathbf q}_0 = \begin{bmatrix*}[r]
 -1 & 0 & 0&2\\
  1 & 0 & 0&2\\
  0 & 0 & 1&\!\!\! -1\\
  0 & 2 & 1&3
\end{bmatrix*}
\qquad\qquad
\begin{tikzpicture}[scale=.3,anchor=base,baseline=1ex]  
\draw  [fill=gold!10] (0,0) circle (1);
\draw  [fill=gold!10] (0,2) circle (1);
\draw  [color=white, fill=gold!10] (-2.55,-1.1) rectangle  (-1,3.1);
\draw  [color=white, fill=gold!10] (2.55,-1.1) rectangle  (1,3.1);
\draw [thick] (1,-1.1) -- (1,3.1)
;
\draw [thick] (-1,-1.1) -- (-1,3.1)
;
\end{tikzpicture}
$$
and apply the inverse operation $T^{-1}$ to the configuration $\bar {\mathbf t}_0$ to obtain 
$$
T^{-1}\,\bar{\mathbf q}_0
           \ = \  T_1 T_2 \ldots T_{n-1}T_n  \, \bar{\mathbf q}_0 
$$
where we took into account that for every elementary element we have $T_i^{-1}=T_i$.
Since each elementary $T$ preserves the integrality of the symbols, and
the parity of curvatures and co-curvatures, as well as the tangency spinors,
the claim holds.

\end{proof}

\newpage

\noindent
$\bullet$
{\bf More on the Descartes group.}

~    
    
Basic facts:
\begin{enumerate}
\item
The subgroups $\APq$ and $\SIq$ are, as abstract entities, isomorphic, 
$$
\APq \cong \SIq
$$
or rather anti-isomorphic, since $(ab)^T=b^T\!a^T$.
What distinguishes them is the way we define their action on $\mathcal D$ and on $\mathcal D_{\rm geo}$.

\item
The structure of the spaces: 
The fibers of $(\mathcal D_{\sf geo}, \pi_{\mathcal D}, \mathcal D)$ look like the Bethe lattice
of Figure~\ref{fig:Bethe}. Every vertex in $\mathcal D_{\sf geo}$ has another set of four edges directed 
to vertices of other fibers.
The relation between the groups and fibrations is shown below:
$$
\begin{matrix}
             {\rm E}(2) 
            && \APq \\[-5pt]
            \text{\begin{rotate}{-90}$\rightsquigarrow$\end{rotate}} 
            && \text{\begin{rotate}{-90}$\rightsquigarrow$\end{rotate}}\\[11pt]
             \mathcal D_{\sf geo} 
            &\quad \xrightarrow{~~\pi_{\mathcal D}~~} \quad
            & \mathcal D 
             &\quad  \xrightarrow{~~\pi_{\mathcal A}~~} \quad
             & \mathcal A
             \\[-3pt]
            &&\text{\begin{rotate}{-90}$\cong$\end{rotate}} 
            && \text{\begin{rotate}{-90}$\cong$\end{rotate}}
            \\[7pt]
            &&\mathcal D/E(2)
            && \mathcal D/\APq
\end{matrix}
$$
where the squiggly arrows $\rightsquigarrow$ mean ``a group acts on''.
The situation is analogous to that in Figure~\ref{fig:fiberbundle},
except $\mathcal T$ needs to be replaced by $\mathcal D$.

\item
The inner Descartes group acts transitively in the set of all 
Descartes quadruples (configurations) of a given Apollonian packing.
In other words, they preserve the fibers of fibration
$$
\pi_{\mathcal D}: \mathcal D\quad \to \quad \mathcal  D/\APq \cong \mathcal A  
$$
The factor space coincides with the set $\mathcal A$ of irreducible Apollonian disk packings.
Elements of the group $\SIq$ mix the fibers, but not consistently, i.e. in general
$$
       \pi_{\mathcal A}(\mathbf q) \not =         \pi_{\mathcal A}(g\mathbf q) 
             \qquad g\in \SIq,\quad  g\not= \id
$$         

\item
The Descartes group is orthogonal: 
$$
\Desq \subset {\rm O}(\mathbb R^4, G) \cong O(1,3; \mathbb R)
$$
as each element $X$ of the generating set satisfies  
\begin{equation}
\label{eq:ortho}
X^TGX=G     \,,
\end{equation}
and so do their products as well.

\item
Each generator is an  involution, i.e., satisfies $X^2 = \id$.
Geometrically, it represents a reflection in the Minkowski space.  
The reflection is orthogonal with respect to $G$.
The associated axis $\mathbf A$ 
and the invariant hyperplane $\mathbf A^\bot$ 
is shown here for the first generators:

For $M_1$: \quad  $\mathbf A_1 = [ 1, 0, 0, 0]$, \quad $\mathbf A_1^\bot =\span( [x\!+\!y\!+\!z,x,y,z]:x,y,z\in\mathbb R)$

For $N_1$:  \quad $\mathbf B_1 = [-1,1,1,1]$, \quad 
$\mathbf B_1^\bot = \span([ 0, x, y, z ] :x,y,z\in\mathbb R)$

\item
The generators represent orthogonal reflection with respect to the inner product $G$.
They be defined from the corresponding eigenvectors $\mathbf v$ 
with the eigenvalue $\lambda=-1$   (call the ``axes'') as follows:
$$
M_{\mathbf v} = \id -2\, \frac {\mathbf v \, \mathbf v^TG}{\mathbf v^TG\,\mathbf v}
$$
For the matrices $M$, the axes are $[1,0,0,0]$ and the permutations.
For the matrices $N$ the axes are $[-1,1,1,1]$ and the permutations.

\item
Note that the only eigenvectors of the generators
that are contained in the set $\mathcal D$ are 
$$
{\small
[1,\!1,\!0,\!0], \   
[1,\!0,\!1,\!0], \   
[1,\!0,\!0,\!1], \   
[0,\!1,\!1,\!0], \ 
[0,\!1,\!0,\!1], \   
[0,\!0,\!1,\!1].
}
$$ 
with the eigenvalue $\lambda = 1$, which are different versions of the base state $\mathbf q_0$.

\item
The topology of the groups:  The Cayley graph of each of the ``small Descartes'' groups 
$\SIq$ and $\APq$ is 
similar to that of the free group of rank 2  
(but not equivalent, for we have relations $X^2=\id$ for the generators).  
The graph is also known under the name of ``$z=4$ Bethe lattice'', shown in Figure~\ref{fig:Bethe}.
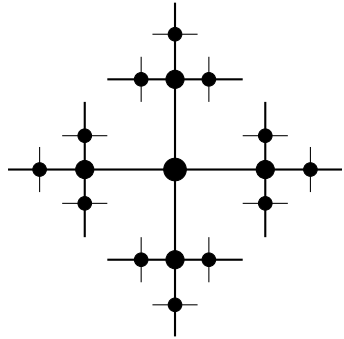
\begin{figure}[h]
\centering
\begin{tikzpicture}[scale=.3] 
\draw  [fill=black] (0,0) circle (.5);
\draw  [fill=black] (4,0) circle (.4);
\draw  [fill=black] (0,4) circle (.4);
\draw  [fill=black] (-4,0) circle (.4);
\draw  [fill=black] (0,-4) circle (.4);
\draw  [fill=black] (4,1.5) circle (.3);
\draw  [fill=black] (4,-1.5) circle (.3);
\draw  [fill=black] (6,0) circle (.3);
\draw  [fill=black] (-4,1.5) circle (.3);
\draw  [fill=black] (-4,-1.5) circle (.3);
\draw  [fill=black] (-6,0) circle (.3);
\draw  [fill=black] (1.5,4) circle (.3);
\draw  [fill=black] (-1.5,4) circle (.3);
\draw  [fill=black] (0,6) circle (.3);
\draw  [fill=black] (1.5,-4) circle (.3);
\draw  [fill=black] (-1.5,-4) circle (.3);
\draw  [fill=black] (0,-6) circle (.3);

\draw [thick] (-7.4 , 0) -- (7.4,0);
\draw [thick] (0,-7.4) -- (0,7.4);

\draw [thick] (-3 , 4) -- (3,4);
\draw [thick] (-3 , -4) -- (3,-4);
\draw [thick] (4, -3) -- (4, 3);
\draw [thick] (-4,-3) -- (-4, 3);

\draw [] (6,-1) -- (6, 1);
\draw [] (-6,-1) -- (-6, 1);
\draw [] (-1,6) -- (1,6);
\draw [] (-1,-6) -- (1,-6);

\draw [] (3,1.5) -- (5, 1.5);
\draw [] (3,-1.5) -- (5, -1.5);
\draw [] (-3,1.5) -- (-5, 1.5);
\draw [] (-3,-1.5) -- (-5, -1.5);

\draw [] (1.5,3) -- (1.5, 5);
\draw [] (-1.5,3) -- (-1.5, 5);
\draw [] (1.5,-3) -- (1.5, -5);
\draw [] (-1.5,-3) -- (-1.5, -5);
\end{tikzpicture}
\caption{Cayley graph of each small Descartes group is an infinite 4-valent cycle-free graph, 
also known as a $z=4$ Bethe lattice.}
\label{fig:Bethe}
\end{figure}

The ``big'' Descartes group $\Desq$ is a graph of valency 8, but it is not  cycle-free anymore, 
since the following identities hold:
$$
M_iN_j = N_jM_i  \qquad \hbox{whenever} \  i\not = j
$$
or, $[M_i,\, N_j] = 0$.  Hence, e.g., $M_1N_2M_1N_2=\id$ defines a cycle in the Cayley graph.

\item
The intriguing fact that the matrix generators of the two small 
Descartes groups are transposes of each other can be interpreted 
with rewriting the orthogonality property \eqref{eq:ortho}:
$$
N_iGM_i = G       \qquad \hbox{and} \qquad M_iGN_i = G   \qquad \forall i=1,2,3,4.  
$$

\item
{\bf Remark:}  The fact that the matrices 
representing the two small groups are transposes of each other is accidental
and inherent to the 2-dimensional instance. 
In 3-dimensional case of 3-disks (balls) in $\mathbb R^3$,
the  {\bf kaleidoscope group} $\SIqq$ is generated by five matrices analogous to the ones before
\begin{equation}
\label{eq:4M5x5}
\small
\setlength\arraycolsep{3pt}
 \begin{bmatrix*}[r]  -1&0&0&0&0\;\\
                                  2 & 1 &0&0&0\;\\
                                  2 & 0&1&0&0\; \\
                                  2 & 0 & 0 & 1&0\;\\
                                  2 & 0 & 0 &0& 1\;
      \end{bmatrix*}\,,
      \quad
\begin{bmatrix*}[r] \; 1& 2&0&0&0\;\\
                                 0 &\! -1 &0&0&0\;\\
                                 0 &  2&1&0&0\;\\
                                 0& 2 &0 &1&0\;\\
                                 0& 2 &0 &0&1\;

      \end{bmatrix*}\,,
\quad
\begin{bmatrix*}[r] \; 1&0& 2&0&0\;\\
                               0 & 1 & 2&0&0\;\\
                               0 & 0&\!-1&0&0\;\\
                               0& 0 & 2 &1&0\;\\
                               0& 0 & 2 &0&1\;

      \end{bmatrix*}\,,
\quad
\ldots \ etc.
\end{equation}
but the {\bf Apollonian} $\APqq$  is generated by five $5\times 5$  matrices:
\begin{equation}
\setlength\arraycolsep{3pt}
\small
\begin{bmatrix*}[r]  -1&1&1&1&1\;\\
                                    0& 1 &0&0&0\;\\
                                    0  & 0&1&0&0\; \\
                                   0 & 0 & 0 & 1&0\;\\
                                   0 & 0 & 0 & 0&1\;
      \end{bmatrix*}\,,
\quad
\begin{bmatrix*}[r] \;1&0&0&0&0\;\\
                                 1 & \!-1 &1&1&1\;\\
                                 0 & 0&1&0&0\;\\
                                 0& 0 &0 &1&0\;\\
                                   0 & 0 & 0 & 0&1\;
      \end{bmatrix*}\,,
\quad
\begin{bmatrix*}[r] \;1&0& 0&0&0\;\\
                               0 & 1 &0&0&0\;\\
                               1 & 1&\!-1&1&0\;\\
                               0& 0 & 0 &1&0\;\\
                               0& 0 & 0 &0&1\;\\
       \end{bmatrix*}\,,
\quad
\ldots \ etc.
%
\end{equation}
which clearly not mutual transposes.

\end{enumerate}

~

\noindent
$\bullet$ {\bf Higher dimensions} 
\\\\
For the sake of completeness, here is the characterization of the Descartes groups 
for $n$-disks in $n$-dimensional space, represented by $m\times m$ matrices, $m=n+2$.
We use the standard notation where $\mathbf e_i$ denotes the column with 1 at the $i$-th position 
and zeros everywhere else.  
Similarly, $\mathbf e_{ij}$ is a matrix with 1 at the $(ij)$-th position and zeros everywhere else.

~


Consider the pseudo-Euclidean space $(\mathbb R^m, G)$, where 
the inner product is given by matrix 
$$
G \ = \ \sum_{i=1}^n (3-m)\,\mathbf e_{ii} + \sum_{i\not= j} \mathbf e_{ij}  \,.
$$
There are three groups generated by $m\times m$ matrices, denoted:
$$
\begin{array}{crl}
(1)  &  {\AP}(m)\!\! &=\; \gen \{\,M_i \;\big|\; i=1,\ldots,m\,\} \, \quad  \\
(2)  &  {\SI}(m) \!\! &=\; \gen \{\,N_i\;\big|\; i=1,\ldots,m\;\}  \,,\quad  \\
(3) &  {\Des}(m) \!\! &=\; \gen \{\,M_i, \, N_i\;\big|\; i=1,\ldots,m\;\}  
\end{array}
$$
where the matrices are  
$$
M_i=\id - 2\mathbf e_{ii} + \frac{2}{m-3} \sum_{j,\, j\not =i}\mathbf e_{ij} \,,
\qquad 
N_i= \id - 2\mathbf e_{ii} + 2 \sum_{j,\, j\not =i}\mathbf e_{ji} \,.
$$
Here are the first matrices of each subgroup: 
\begin{equation}
\label{eq:matrHdim}
\small
\setlength\arraycolsep{3pt}
M_1 = \begin{bmatrix*}[r]  -1&\frac{2}{m-3}&\ldots&\frac{2}{n-1}\;\\
                                    0& 1\ \ &\ldots&0\ \ \;\\
                                    \vdots  & &&\vdots\ \ \; \\
                                   0 & 0 \ \ & \ldots & 1\ \ \;
      \end{bmatrix*}
\qquad 
N_1 = \begin{bmatrix*}[r]  -1&0&\ldots&0\;\\
                                  2 & 1 &\ldots&0\;\\
                                  \vdots & &&\vdots\; \\
                                  2 & 0 & \ldots & 1\;
      \end{bmatrix*}
\end{equation}
Basic properties:  The matrices are ({\it i}) involutions
and ({\it ii}) orthogonal with respect to~$G$:
$$
\begin{array}{cl}
(i) & M_i^2 = N_i^2 = \id  \qquad \forall i=1,\ldots,n \\
(ii) & X^tGX=G\\
(iii) & \det X = -1
\end{array} 
$$
In general, the generators do not commute, except the case:
$$
\hbox{If} \quad i\not= j \quad\hbox{then} \quad  [M_i, N_j] = 0  
$$
The generators are orthogonal reflections in hyperplanes and may be 
characterized by   
axes (eigenvectors with eigenvalues $\lambda=-1$):
$$
\mathbf A_i = \mathbf e_i \,,
\qquad 
\mathbf B_i = - \mathbf e_i + \sum_{j\not=i} \mathbf e_j \,.
$$
The generators can be thus defined as:
$$
M_{i} \ = \   \id -2\, \frac {\tensor{\mathbf A}{_i}\tensor{\mathbf A}{_i^T} G}%
                                      {\,\tensor{\mathbf A}{_i^T} G\,\tensor{\mathbf A}{_i}}\,,
\qquad 
N_{i} \ = \ \id -2\, \frac {\tensor{\mathbf B}{_i}\tensor{\mathbf B}{_i^T}G}
                                   {\,\tensor{\mathbf B}{_i^T} G\,\tensor{\mathbf B}{_i}}   \,.
$$

\noindent
{\bf Small dimensions:}  The cases of $m=4$ and $m=5$ are given above. 
The case of $m=3$ corresponding to 1-disks on a real line is degenerated 
and only the component $\SI(3)$ is well-defined.
The integrality of the matrices \eqref{eq:matrHdim} breaks for dimensions $m>5$.

Geometrically, the case $m=2$ corresponds to the absurd case of the 0-disks 
in the 0-dimensional space,
yet formally it is well-defined; the matrices are:
$$
\small
\setlength\arraycolsep{3pt}
G = \begin{bmatrix*}[r] \, 1&1\,\\
                                     1& 1\, \\
      \end{bmatrix*},
      \quad 
M_1 = \begin{bmatrix*}[r]  -1&-2\,\\
                                          0& 1\, \\
      \end{bmatrix*}\,,
\  
M_2 = \begin{bmatrix*}[r]  1&0\,\\
                                       -2& -1\, \\
      \end{bmatrix*}\,,
\  
N_1 = \begin{bmatrix*}[r]  -1&0\,\\
                                         2 & 1\, \;
      \end{bmatrix*}\,,
\  
N_2 = \begin{bmatrix*}[r] \, 1&2\,\\
                                        0 & -1\,
      \end{bmatrix*}.
$$
where $G$ is the Gram matrix of the inner product in the space. 
Note that the first matrix determines the others:  $M_2= - M_1^T$,
and the $N$ matrices are the negatives of these two.
They produce an interesting subgroup of $\hbox{SL}^{\pm}(2)$.

~\\
{\bf Remark:}  For additional material see pages at \cite{jk-www}.

\section*{Acknowledgments}

I would like to thank Philip Feinsilver for helpful discussions 
and his supportive interest in this project.


%
%
%
%
%
%
%
%
%
%
%


\begin{thebibliography}{99}\setlength{\itemsep}{-1mm}







\bibitem
{Descartes}
Ren\'e Descartes, 
Oeuvres de Descartes, Correspondence IV, 
(C. Adam and P. Tannery,Eds.), Paris: Leopold Cerf 1901.



\bibitem{Lagarias+}
R. Graham, J. Lagarias, C. Yan,
Apollonian circle packings:number theory
Journal of Number Theory, 2000


\bibitem{jk-Descartes}
Jerzy Kocik,
A theorem on circle configurations  
arXiv:0706.0372  (2007, 16 pages).

\bibitem{jk-simple}
Jerzy Kocik,
Proof of Descartes circle formula and its generalization clarified 
[arXiv:1910.09174] (2019, 3 pages) 




\bibitem{jk-Diophantine}
Jerzy Kocik,
On a Diophantine equation that generates all integral Apollonian gaskets, 
ISRN Geometry.Article ID 348618, 19 pages, 2012. doi:10.5402/2012/348618.
[arXiv:2008.04440]

\bibitem{jk-depth}
Jerzy Kocik,
Apollonian depth and the accidental fractal, (2020, 35 pages]
[arXiv:2002.04135] 









\bibitem{jk-Clifford}
Jerzy Kocik,
Clifford algebras and Euclid's parameterization of Pythagorean triples, 
Advances in Applied Clifford Algebras (Mathematical Structures), {\bf 17} 2007 pp. 71-93. 

\bibitem{jk-spinors}
Jerzy Kocik,
Spinors and the Descartes configuration of circles 
[arXiv:1909.06994]  (2019, 21 pages)

\bibitem{jk-universal}
Jerzy Kocik,
Apollonian depth, spinors, and the super-Dedekind tessellation 
[arXiv:2009.02680] (2020, 20 pages)

\bibitem{jk-lattices}
Jerzy Kocik,
Spinors, lattices, and classification of integral Apollonian disk packings,
[arXiv:2001.05866] (2020, 35 pages)


\bibitem{jk-corona}
Jerzy Kocik,
Apollonian coronas and a new Zeta function,
[arXiv:1909.09941]  (2019, 19 pages)






%
%



\bibitem{Lagarias}    
Jeffrey C. Lagarias, Colin L. Mallows and Allan Wilks, 
Beyond the Descartes circle theorem, 
{\it Amer. Math. Monthly} 109 (2002), 338--361. [eprint: arXiv math.MG/0101066]

\bibitem{Landau}
E. Landau,  
"Über die Einteilung der positiven ganzen Zahlen in vier Klassen nach der Mindeszahl der zu ihrer additiven Zusammensetzung erforderlichen Quadrate." Arch. Math. Phys. 13, 305-312, 1908.







\bibitem{Shanks} 
Shanks, D. Non-Hypotenuse Numbers. Fibonacci Quart. 13, 319-321, 1975.


\bibitem{Soddy}
Frideric Soddy, The kiss precise, {\it Nature},  {\bf 137}, page1021(1936)

\bibitem{Stange}

Katherine Stange, 
The sensual Apollonian circle packing,
{\it Expositiones Mathematicae}, {\bf 34} 4 (2016), 364-395.


\bibitem{jk-www}
Apolloniana - a site discussing the geometry of the  Apollonian disk packings: [https://apolloniana.blogspot.com/]








\end{thebibliography}
\end{document}